\newtheorem{theorem}{Theorem}[section]
\newtheorem*{theorem*}{Theorem}
\newtheorem*{proposition*}{Proposition}
\newtheorem{lemma}[theorem]{Lemma}
\newtheorem*{lemma*}{Lemma}
\newtheorem*{conjecture*}{Conjecture}
\newtheorem{fact}[theorem]{Fact}
\newtheorem*{fact*}{Fact}
\newtheorem*{hypothesis*}{Hypothesis}
\theoremstyle{definition}
\newtheorem{definition}[theorem]{Definition}
\newtheorem*{definition*}{Definition}
\newtheorem{example}[theorem]{Example}
\newtheorem{problem}[theorem]{Problem}
\theoremstyle{remark}
\newtheorem*{claim*}{Claim}
\newtheorem*{remark*}{Remark}
\newtheorem*{observation*}{Observation}
\crefname{lemma}{Lemma}{Lemmas}
\crefname{fact}{Fact}{Facts}
\crefname{theorem}{Theorem}{Theorems}
\crefname{corollary}{Corollary}{Corollaries}
\crefname{claim}{Claim}{Claims}
\crefname{example}{Example}{Examples}
\crefname{problem}{Problem}{Problems}
\crefname{definition}{Definition}{Definitions}
\newcommand{\proves}[1]{\vdash_{#1}}
\newcommand{\Paren}[1]{\left(#1\right)}
\newcommand{\Brac}[1]{\left[#1\right]}
\newcommand{\Abs}[1]{\left\lvert#1\right\rvert}
\newcommand{\Norm}[1]{\left\lVert#1\right\rVert}
\newcommand{\iprod}[1]{\langle#1\rangle}
\newcommand{\Esymb}{\mathbb{E}}
\newcommand{\Psymb}{\mathbb{P}}
\DeclareMathOperator*{\E}{\Esymb}
\DeclareMathOperator*{\ProbOp}{\Psymb}
\renewcommand{\Pr}{\ProbOp}
\newcommand{\mper}{\,.}
\newcommand{\mcom}{\,,}
\newcommand\bdot\bullet
\DeclareMathOperator{\Tr}{Tr}
\DeclareMathOperator{\poly}{poly}
\newcommand{\N}{\mathbb N}
\newcommand{\R}{\mathbb R}
\newcommand{\cA}{\mathcal A}
\newcommand{\cF}{\mathcal F}
\newcommand{\cN}{\mathcal N}
\newcommand{\cS}{\mathcal S}
\newcommand{\bZ}{\mathbf Z}
\renewcommand{\leq}{\leqslant}
\renewcommand{\geq}{\geqslant}
\let\epsilon=\varepsilon
\numberwithin{equation}{section}
\newcommand\MYcurrentlabel{xxx}
\newcommand{\MYstore}[2]{%
  \global\expandafter \def \csname MYMEMORY #1 \endcsname{#2}%
}
\newcommand{\MYload}[1]{%
  \csname MYMEMORY #1 \endcsname%
}
\newcommand{\MYnewlabel}[1]{%
  \renewcommand\MYcurrentlabel{#1}%
  \MYoldlabel{#1}%
}
\newcommand{\MYdummylabel}[1]{}
\newcommand{\torestate}[1]{%
  \let\MYoldlabel\label%
  \let\label\MYnewlabel%
  #1%
  \MYstore{\MYcurrentlabel}{#1}%
  \let\label\MYoldlabel%
}
\newcommand{\restatetheorem}[1]{%
  \let\MYoldlabel\label
  \let\label\MYdummylabel
  \begin{theorem*}[Restatement of \cref{#1}]
    \MYload{#1}
  \end{theorem*}
  \let\label\MYoldlabel
}
\newcommand{\restatelemma}[1]{%
  \let\MYoldlabel\label
  \let\label\MYdummylabel
  \begin{lemma*}[Restatement of \cref{#1}]
    \MYload{#1}
  \end{lemma*}
  \let\label\MYoldlabel
}
\newcommand{\restateprop}[1]{%
  \let\MYoldlabel\label
  \let\label\MYdummylabel
  \begin{proposition*}[Restatement of \cref{#1}]
    \MYload{#1}
  \end{proposition*}
  \let\label\MYoldlabel
}
\newcommand{\restatefact}[1]{%
  \let\MYoldlabel\label
  \let\label\MYdummylabel
  \begin{fact*}[Restatement of \prettyref{#1}]
    \MYload{#1}
  \end{fact*}
  \let\label\MYoldlabel
}
\newcommand{\restate}[1]{%
  \let\MYoldlabel\label
  \let\label\MYdummylabel
  \MYload{#1}
  \let\label\MYoldlabel
}
\newcommand{\e}{\epsilon}
\newcommand*{\Id}{\mathrm{Id}}
\newcommand*{\loweredwidetildehelper}[2]{\hbox{\csname dimen@\endcsname\accentfontxheight#1%
  \accentfontxheight#11.25\csname dimen@\endcsname
  $\csname m@th\endcsname#1\widetilde{#2}$%
  \accentfontxheight#1\csname dimen@\endcsname
  }%
}
\newcommand*{\accentfontxheight}[1]{\fontdimen5\ifx#1\displaystyle \textfont \else\ifx#1\textstyle \textfont \else\ifx#1\scriptstyle \scriptfont \else \scriptscriptfont \fi\fi\fi3
}
\DeclareMathOperator{\pEE}{\tilde{\mathbb{E}}}
\newcommand{\pE}{\pEE\nolimits}
\begin{document}

\begin{frontmatter}

\title{Mean estimation with sub-Gaussian rates in polynomial time}
\runtitle{Polynomial-time mean estimation}


\begin{aug}
\author{\fnms{Samuel B.} \snm{Hopkins} \thanksref{t1} \thanksref{t2}}
\thankstext{t1}{The author is supported by a Miller Fellowship at UC Berkeley. This project was also supported by NSF Award No. 1408673.}
\thankstext{t2}{MSC 2010 classification: Primary 62H12, Secondary 68W. Keywords and phrases: multivariate estimation, heavy tails, confidence intervals, sub-Gaussian rates, semidefinite programming, sum of squares method.}
\address{\url{hopkins@berkeley.edu} \\ \url{www.samuelbhopkins.com} \\ Soda Hall \\ Berkeley, CA 94720} 
\affiliation{Department of Electrical Engineering and Computer Science \\ University of California, Berkeley}

\runauthor{Samuel B. Hopkins}
\end{aug}

\begin{abstract}
We study polynomial time algorithms for estimating the mean of a heavy-tailed multivariate random vector.
We assume only that the random vector $X$ has finite mean and covariance.
In this setting, the radius of confidence intervals achieved by the empirical mean are large compared to the case that $X$ is Gaussian or sub-Gaussian.

We offer the first polynomial time algorithm to estimate the mean with sub-Gaussian-size confidence intervals under such mild assumptions.
Our algorithm is based on a new semidefinite programming relaxation of a high-dimensional median.
Previous estimators which assumed only existence of finitely-many moments of $X$ either sacrifice sub-Gaussian performance or are only known to be computable via brute-force search procedures requiring time exponential in the dimension.

\end{abstract}



\end{frontmatter}

\section{Introduction}
\label[section]{sec:intro}
This paper studies estimation of the mean of a heavy-tailed multivariate random vector from independent samples.
In particular, we address the question: \emph{Are statistically-optimal confidence intervals for heavy-tailed multivariate mean estimation achievable by polynomial-time computable estimators?}
Our main result answers this question affirmatively, up to some explicit constants.

Estimating the mean of a distribution from independent samples is among the oldest problems in statistics.
From the \emph{asymptotic} viewpoint (that is, when the number of samples $n$ tends to infinity) it is well understood.
If $X_1,\ldots,X_n$ are $n$ independent copies of a random variable $X$ on $\R^d$, the empirical mean $\overline{\mu}_n = \tfrac 1n \sum_{i \leq n} X_i$ converges in probability to the mean $\mu = \E X$.
If $X$ has finite variance, the limiting distribution of $\overline{\mu}_n$ is Gaussian.

Aiming for finer-grained (finite-sample) guarantees, this paper takes a \emph{non-asymptotic} view.
For every $\delta > 0$ and $n \in \N$ we ask for an estimator $\hat{\mu}_{n,\delta}$ which comes with a tail bound of the form
\[
  \Pr_{X_1,\ldots,X_n} \left \{ \Norm{\hat{\mu}_{n,\delta}(X_1,\ldots,X_n) - \mu} > r_\delta \right \} \leq \delta
\]
for as small a radius $r_\delta$ (which may depend on $n$ and the distribution of $X$) as possible.
That is, we are interested in estimators with the smallest-possible confidence intervals.

When $X$ is Gaussian or sub-Gaussian, strong non-asymptotic guarantees are available on confidence intervals of the sample mean $\overline{\mu}_n$.
Applying Gaussian concentration, if $X$ has covariance $\Sigma$, then in the Gaussian setting,
\begin{align}\label[equation]{eq:gauss-concentration}
  \Pr \left \{ \Norm{\overline{\mu}_{n}(X_1,\ldots,X_n) - \mu} > \sqrt{\frac{ \Tr \Sigma}{n}} + \sqrt{\frac{2 \|\Sigma\| \log(1/\delta)}{n}} \right \} \leq \delta
\end{align}
where $\|\Sigma\| = \lambda_{\text{max}}(\Sigma)$ is the operator norm/maximum eigenvalue of $\Sigma$.

However, if one tries to replace the assumption that $X$ is Gaussian with something weaker, \cref{eq:gauss-concentration} breaks down for the sample mean $\overline{\mu}_n$.
For instance, consider a much weaker assumption: $X$ has finite covariance $\Sigma$.
Then the best possible tail inequality for the sample mean becomes
\begin{align}\label[equation]{eq:bad-concentration}
 \Pr \left \{ \Norm{\overline{\mu}_{n}(X_1,\ldots,X_n) - \mu} > \sqrt{\frac{ \Tr \Sigma}{\delta n}} \right \} \leq \delta\mper
\end{align}
(See e.g. \cite{catoni2012challenging}, section 6.)
By comparison with \cref{eq:gauss-concentration}, the tail bound \cref{eq:bad-concentration} has degraded in two ways: first, the $\log(1/\delta)$ term has become $1/\delta$, and second, that term multiplies $\Tr \Sigma$ rather than $\|\Sigma\|$; note that $\Tr \Sigma$ may be as large as $d \|\Sigma\|$, as in the case of isotropically-distributed data.

This paper focuses on finding estimators $\hat{\mu}$ which can match \eqref{eq:gauss-concentration} under milder assumptions than sub-Gaussianity, such as the existence of finitely-many moments.
Weak assumptions like this allow for the presence of \emph{heavy tails}.
A $d$-dimensional random vector $X$ is heavy-tailed if for some unit $u \in \R^d$, the tail of $\iprod{X,u}$ outgrows any exponential distribution; i.e. for all $s > 0$ one has $\lim_{t \rightarrow \infty} e^{ts} \Pr\{ \iprod{X - \mu,u} > t\} = \infty$.

There are many situations in which one may wish to avoid a Gaussian or sub-Gaussian assumption.
One may simply wish to be conservative, or there may reason to believe a Gaussian assumption is unjustified -- heavy-tailed and high-dimensional data are not unusual.
Many distibutions in big-data settings have heavy tails: for example, \emph{power law} distributions consistently emerge from statistics of large networks (the internet graph, social network graphs, etc) \cite{faloutsos1999power, leskovec2005graphs}.
And no matter how nice the underlying distribution, corruptions and noise in collected data often result in an empirical distribution with many outliers \cite{rahm2000data}.
As a result, such $X$ may have only a few finite moments; that is, $\E X^p$ may not exist for large-enough $p \in \N$.

This suggests the question of whether an estimator with a guarantee matching \cref{eq:gauss-concentration} (up to universal constants) exists under only the assumption that $X$ has finite mean and covariance.
(These assumptions are necessary to obtain the $1/\sqrt{n}$ rate in both \cref{eq:gauss-concentration,eq:bad-concentration}.)
One may show this is impossible if a single estimator is desired to satisfy an inequality like \cref{eq:gauss-concentration} \cite{devroye2016sub}.

Quite remarkably, the story changes if the estimator may additionally depend on the desired confidence level $1-\delta$.
Indeed, by now in the classical case $d = 1$, many such \emph{$\delta$-dependent} estimators are known which achieve \cref{eq:gauss-concentration} up to explicit constants for $\delta \geq 2^{-O(n)}$, even when $X$ has only finite mean and variance \cite{catoni2012challenging, devroye2016sub}.
Since the $\delta$-dependence is a necessary concession to achieve concentration like \cref{eq:gauss-concentration} with only two finite moments, for this paper our estimators are all allowed to depend on $\delta$: it is an interesting future direction to explore what fraction of the theory may be reproduced without the $\delta$-dependence \cite{devroye2016sub, minsker2018uniform}.
The lower bound $\delta \geq 2^{-O(n)}$ is also information-theoretically necessary \cite{devroye2016sub}.

The high-dimensional case is much more difficult, and has been resolved only recently: the culmination of a series of works \cite{lerasle2011robust,hsu2016loss,minsker2015geometric,LM18} is the following theorem of Lugosi and Mendelson, who gave the family of estimators matching \cref{eq:gauss-concentration} (up to constants) for any $d$ under only the assumption of finite second moments.
(In fact, their result also holds in the infinite-dimensional Banach space setting.)
\begin{theorem}[Lugosi-Mendelson estimator, \cite{LM18}]\label[theorem]{thm:lugosi-mendelson}
  There is a universal constant $C$ such that for every $n,d$, and $\delta \geq 2^{-n/C}$ there is an estimator $\hat{\mu}_{\delta,n} \, : \, \R^{dn} \rightarrow \R^d$ such that for every random variable $X$ on $\R^d$ with finite mean and covariance,
  \[
  \Pr \left \{ \Norm{\hat{\mu}_{n,\delta}(X_1,\ldots,X_n) - \mu} > C \Paren{ \sqrt{\frac{ \Tr \Sigma}{n}} +  \sqrt{\frac{ \|\Sigma\| \log(1/\delta)}{n}}} \right \} \leq \delta
  \]
  where $X_1,\ldots,X_n$ are i.i.d. copies of $X$ and $\mu = \E X$ and $\Sigma = \E (X - \mu)(X - \mu)^\top$.
\end{theorem}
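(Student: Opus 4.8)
The plan is to exhibit $\hat\mu_{n,\delta}$ as a high-dimensional \emph{median-of-means} estimator and to analyze it via a uniform-over-directions control of one-dimensional medians. Set $k = \lceil C_1 \log(1/\delta) \rceil$ for a large absolute constant $C_1$ (the hypothesis $\delta \ge 2^{-n/C}$ ensures $k \le n$), partition $X_1, \dots, X_n$ into $k$ blocks of size $m = \lfloor n/k \rfloor$, and let $Z_1, \dots, Z_k \in \R^d$ be the block means. Define $\hat\mu_{n,\delta}$ to be any minimizer of
\[
  \Phi(x) \;=\; \sup_{\Norm v = 1}\ \median_{j \in [k]}\ \Abs{\iprod{Z_j - x, v}} \mper
\]
Since $\Phi$ is continuous and coercive (taking $v = x/\Norm x$, the inner median is at least $\Norm x - \max_j \Norm{Z_j}$), the minimum is attained, so $\hat\mu_{n,\delta}$ is a well-defined, measurable function of the sample; it depends on $\delta$ only through $k$, and in particular requires no knowledge of $\mu$ or $\Sigma$. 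Write $r = C_0 \paren{ \sqrt{\Tr \Sigma / n} + \sqrt{\Norm\Sigma \log(1/\delta)/n} }$ for a suitable absolute constant $C_0$. The whole argument hinges on showing that the event
\[
  \cE \;:\; \text{for every unit } v, \quad \#\Set{ j \in [k] : \Abs{\iprod{Z_j - \mu, v}} > r } \;<\; k/2
\]
has probability at least $1 - \delta$. Granting this, the theorem follows with $C = 2C_0$: on $\cE$, for every $v$ more than $k/2$ blocks satisfy $\Abs{\iprod{Z_j - \mu, v}} \le r$, so $\Phi(\mu) \le r$, hence $\Phi(\hat\mu_{n,\delta}) \le r$, so for every $v$ at least $k/2$ blocks satisfy $\Abs{\iprod{Z_j - \hat\mu_{n,\delta}, v}} \le r$; the two block sets overlap in some block $j$, and the triangle inequality gives $\Abs{\iprod{\hat\mu_{n,\delta} - \mu, v}} \le 2r$, so $\Norm{\hat\mu_{n,\delta} - \mu} \le 2r$ after taking the supremum over $v$.

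To bound $\Pr[\cE]$, first fix a unit $v$. Since $\iprod{X, v}$ has mean $\iprod{\mu, v}$ and variance $v^\top \Sigma v \le \Norm\Sigma$, the block mean obeys $\Var \iprod{Z_j, v} = v^\top \Sigma v / m$, so Chebyshev gives $\Pr\set{ \Abs{\iprod{Z_j - \mu, v}} > r } \le v^\top\Sigma v /(m r^2) \le \Norm\Sigma/(mr^2)$, a small constant once $r \gtrsim \sqrt{\Norm\Sigma / m} \asymp \sqrt{\Norm\Sigma \log(1/\delta)/n}$; a Chernoff bound on the number of ``bad'' blocks then gives $\cE$ for that one $v$ with probability $1 - e^{-ck} \ge 1 - \delta$. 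The difficulty is the supremum over all $v$: a union bound over an $\epsilon$-net of the sphere costs $e^{\Theta(d)}$, which would replace $\log(1/\delta)$ by $d$ and only reprove something like \cref{eq:bad-concentration}. Instead I would fix a $1/r$-Lipschitz surrogate $\phi$ with $\Ind[\abs t > r] \le \phi(t) \le \Ind[\abs t > r/2]$ and $\phi(0) = 0$, so that $\set{\cE \text{ fails}} \subseteq \set{ F \ge k/2 }$ where $F = \sup_{\Norm v = 1} \sum_{j=1}^k \phi(\iprod{Z_j - \mu, v})$, and control $F$ by its mean and a concentration inequality.

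For the mean, split $\E F \le \sup_v \sum_j \E\phi(\iprod{Z_j - \mu, v}) + \E \sup_v \sum_j \paren{ \phi(\iprod{Z_j - \mu, v}) - \E\phi(\iprod{Z_j - \mu, v}) }$. The first term is at most $\sup_v \tfrac{4 k\, v^\top\Sigma v}{m r^2} \le \tfrac{4 k^2 \Norm\Sigma}{n r^2}$ by Chebyshev; the second, by symmetrization and the Ledoux--Talagrand contraction principle (using that $\phi$ is $1/r$-Lipschitz with $\phi(0)=0$), is at most $\tfrac{C}{r}\, \E \Norm{ \sum_j \epsilon_j (Z_j - \mu) } \le \tfrac{C}{r} \Paren{ \sum_j \E \Norm{Z_j - \mu}^2 }^{1/2} = \tfrac{C}{r} \sqrt{ k \Tr\Sigma / m } = \tfrac{C k}{r} \sqrt{\Tr\Sigma / n}$. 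With the above choice of $r$, both are at most, say, $k/10$, so $\E F \le k/5$; note this is exactly where the two terms of \cref{eq:gauss-concentration} enter, $\sqrt{\Tr\Sigma/n}$ from the Rademacher/symmetrization estimate and $\sqrt{\Norm\Sigma\log(1/\delta)/n}$ from the Chebyshev ``diagonal'' estimate together with $k \asymp \log(1/\delta)$. For the concentration one should \emph{not} invoke bounded differences directly: a single $Z_j$ moves $F$ by at most $1$, but McDiarmid then only controls fluctuations of order $\sqrt{k \log(1/\delta)} \asymp k$, which swamps the bound. Since $0 \le \phi \le 1$ and the weak variance $\sup_v \sum_j \Var \phi(\iprod{Z_j-\mu,v}) \le \sup_v\sum_j \E\phi(\iprod{Z_j-\mu,v}) \le \E F = O(k)$, Talagrand's inequality for suprema of empirical processes (in Bousquet's form) gives $\Pr\set{ F \ge \E F + t } \le \exp\paren{ - c\, t^2 / (k + t) }$; taking $t = k/10$ yields $\Pr\set{ F \ge 3k/10 } \le e^{-c' k} \le \delta$ once $C_1$ is large enough, which proves $\Pr[\cE] \ge 1 - \delta$ and finishes the plan.

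The step I expect to be the main obstacle is precisely this uniform-in-direction control of $F$: matching the sub-Gaussian rate \cref{eq:gauss-concentration} requires the symmetrization-plus-contraction estimate and the variance-aware (Talagrand rather than McDiarmid) concentration to be combined carefully, since the crude bounded-differences constant is off by a $\sqrt{\log(1/\delta)}$ factor and a naive net argument costs an unaffordable $e^{\Theta(d)}$. The remaining ingredients --- attainment and measurability of the minimizer, the Chernoff/pigeonhole bookkeeping, and the choice of median convention for even $k$ --- are routine.
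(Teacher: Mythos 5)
Your proof is essentially correct and follows the same route as Lugosi and Mendelson's (which the paper cites rather than reproves, but whose ingredients — symmetrization plus the Ledoux--Talagrand contraction for the expectation step, a bounded-differences concentration step, and the shared-inlier pigeonhole — are exactly those the paper redeploys in proving \cref{lem:sdp-central} via \cref{lem:2-to-1} and \cref{lem:bdd-differences}).

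One claim in your write-up is wrong, and it is worth flagging because it is the very step you single out as ``the main obstacle.'' You assert that bounded differences ``only controls fluctuations of order $\sqrt{k\log(1/\delta)} \asymp k$, which swamps the bound,'' so Bousquet is necessary. This is not so. With each $c_j = 1$, McDiarmid gives $\Pr\{F \geq \E F + t\} \leq \exp(-2t^2/k)$; having shown $\E F \leq k/5$, take $t = 3k/10$ to get $\Pr\{F \geq k/2\} \leq e^{-0.18 k} = \delta^{0.18 C_1}$, which is at most $\delta$ once $C_1 \geq 6$. The deviation scale $\sqrt{k\log(1/\delta)}$ you worried about equals $k/\sqrt{C_1}$, i.e.\ any desired small multiple of $k$: enlarging $C_1$ is precisely the free parameter you control. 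The paper's own concentration step (\cref{lem:bdd-differences}, invoked in the proof of \cref{lem:sdp-central}) is exactly McDiarmid applied this way. Your Bousquet route is valid too once you center the process — Bousquet's inequality requires mean-zero summands, so it should be applied to $F - \sup_v \sum_j \E\phi(\iprod{Z_j-\mu,v})$ rather than to $F$ directly — but it buys nothing here. Finally, a constant slip: the surrogate $\phi$ you describe ($0$ on $[-r/2,r/2]$, $1$ outside $[-r,r]$) is $2/r$-Lipschitz, not $1/r$-Lipschitz. Neither point alters the conclusion.
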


In high-dimensional estimation, especially with large data sets, it is important to study estimators with guarantees both on statistical accuracy and algorithmic tractability.
Indeed, there is growing evidence that some basic high-dimensional estimation tasks which appear possible from a purely information-theoretic perspective altogether lack computationally efficient algorithms.
There are many examples of such \emph{information-computation gaps}, including the problem of finding sparse principal components of high-dimensional data sets (the \emph{sparse PCA problem}) and optimal detection of hidden communities in random graphs with latent community structure (the \emph{$k$-community stochastic block model}) \cite{DBLP:conf/colt/BerthetR13, DBLP:conf/nips/MaW15, HKKMP17, decelle2011inference, BKM17, HS17}.

From this perspective, a major question left open by \cref{thm:lugosi-mendelson} is whether there exists an estimator matching \cref{thm:lugosi-mendelson} but which is efficiently computable.
In this paper, \emph{efficiently computable} means computable by an algorithm running in time $(nd \log(1/\delta) )^{O(1)}$ -- that is, polynomial in both the number of samples and the ambient dimension, as well as the number of bits needed to describe the input $\delta > 0$.
Indeed, the \emph{median-of-means} estimator used by Lugosi and Mendelson lacks any obvious algorithm running in time less than $\exp(cd)$, for some fixed $c > 0$, which is the time required for brute-force search over every direction in a $d$-dimensional $\e$-net.
More worringly, the key idea of Lugosi and Mendelson is a combinatorial notion of a multivariate median, which appears to place the problem dangerously near those high-dimensional combinatorial statistics problems which lack efficient algorithms altogether.

The main result of this paper shows that there is a family of estimators matching \cref{thm:lugosi-mendelson} and computable by polynomial-time algorithms.

\begin{theorem}[Main theorem]\label[theorem]{thm:main}
  There are universal constants $C_0,C_1,C_2$ such that for every $n,d \in \N$ and $\delta > 2^{-n/C_2}$ there is an algorithm which runs in time $O(nd) + (d \log(1/\delta))^{C_0}$ such that for every random variable $X$ on $\R^d$, given i.i.d. copies $X_1,\ldots,X_n$ of $X$ the algorithm outputs a vector $\hat{\mu}_\delta(X_1,\ldots,X_n)$ such that
\[
  \Pr \left \{ \Norm{\mu - \hat{\mu}_\delta } > C_1 \Paren{\sqrt{\frac{\Tr \Sigma}{n}} + \sqrt{\frac{ \|\Sigma\| \log(1/\delta)}{n}}} \right \} \leq \delta\mcom
\]
where $\E X = \mu$ and $\E(X - \mu)(X - \mu)^\top = \Sigma$.
\end{theorem}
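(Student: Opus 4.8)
The plan is to follow the Lugosi--Mendelson paradigm but to replace their combinatorial multivariate median by a semidefinite relaxation whose feasibility is certified by a constant-degree sum-of-squares proof. \textbf{Step 1: reduction to finding a good center.} First I would recall (essentially the content of \cref{thm:lugosi-mendelson} and its proof) that, after partitioning the $n$ samples into $k = \Theta(\log(1/\delta))$ buckets and forming the bucket means $Z_1,\dots,Z_k$ in $O(nd)$ time, with probability $1-\delta$ the true mean $\mu$ is a \emph{good center}: writing $r^2 = C(\Tr\Sigma/n + \|\Sigma\|\log(1/\delta)/n)$, for every unit $u$ at most, say, $k/100$ of the buckets have $|\iprod{Z_i-\mu,u}| > r$. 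An elementary pigeonhole argument (apply the good-center property of both points along $u \propto \hat\mu - \mu$) shows that \emph{any} good center $\hat\mu$ satisfies $\|\hat\mu-\mu\| = O(r)$, which is exactly the claimed bound; so it suffices to compute some good center in time $(d\log(1/\delta))^{O(1)}$.

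\textbf{Step 2: SDP relaxation of ``good center''.} Goodness of a candidate $\hat\mu$ is the assertion that the combinatorial quantity $B(\hat\mu) := \max\{\sum_i b_i : \|u\|^2\le 1,\ b_i^2 = b_i,\ b_i(\iprod{Z_i-\hat\mu,u}^2 - \rho^2 r^2) \ge 0\}$ is at most $\eta k$ for suitable constants $\rho,\eta$; this maximum is NP-hard in general. I would introduce the degree-$O(1)$ pseudo-distribution relaxation $\widetilde B(\hat\mu) \ge B(\hat\mu)$ of this polynomial optimization problem, treating $\hat\mu$ as an additional (bounded-degree) variable of the relaxation --- legitimate since a crude preliminary estimate confines $\hat\mu$ to a ball of polynomial radius --- so that jointly minimizing over $\hat\mu$ and the pseudo-distribution is a single semidefinite program of size $(d\log(1/\delta))^{O(1)}$, solvable and roundable in time $(d\log(1/\delta))^{O(1)}$; together with Step 1 this gives the running time $O(nd) + (d\log(1/\delta))^{C_0}$.

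\textbf{Step 3: the crux --- the relaxation is not too weak.} The algorithm outputs $\hat\mu$ with $\widetilde B(\hat\mu) \le \min_{\hat\mu'}\widetilde B(\hat\mu') \le \widetilde B(\mu)$, so everything reduces to showing that on the same $1-\delta$ event one has $\widetilde B(\mu) \le \eta k$, i.e.\ that there is a constant-degree sum-of-squares refutation of ``$\sum_i b_i > \eta k$'' from the axioms with $\hat\mu$ set to $\mu$. This is where the heavy tails must be tamed inside a bounded-degree proof, and the key is to strengthen the statistical input to a \emph{spectral/stability} form: with probability $1-\delta$ there is a fixed set $G \subseteq [k]$ with $|G| \ge (1-\tfrac{1}{100})k$ and $\big\|\sum_{i\in G}(Z_i-\mu)(Z_i-\mu)^\top\big\| \le \tfrac{1}{100} r^2 k$ (a robust second-moment bound obtained by discarding the $O(1/100)$-fraction of buckets of largest norm and applying matrix concentration to the rest). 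Given such a $G$, the refutation is pure bookkeeping: $b_i^2 = b_i$ gives $0 \le b_i \le 1$ in sum-of-squares, hence for $i\in G$, $\rho^2 r^2 b_i \le b_i\iprod{Z_i-\mu,u}^2 \le \iprod{Z_i-\mu,u}^2$; summing over $i\in G$ yields $\rho^2 r^2 \sum_{i\in G} b_i \le \iprod{u,(\sum_{i\in G}(Z_i-\mu)(Z_i-\mu)^\top)u} \le \tfrac{1}{100} r^2 k$, so $\sum_i b_i \le \sum_{i\in G} b_i + |[k]\setminus G| \le (\tfrac{1}{100\rho^2} + \tfrac{1}{100})k$, contradicting $\sum_i b_i > \eta k$ once $\rho$ is a large constant and $\eta$ small. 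Every step has constant degree in $(b,u)$, so the certificate has degree $O(1)$; the spectral phrasing is precisely what makes the heavy-tail blow-up invisible to a bounded-degree proof.

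\textbf{Step 4: rounding and wrap-up.} Finally I would check that the $\hat\mu$ read off from the SDP is itself a good center: since $\widetilde B$ dominates $B$ pointwise and the refutation above applies verbatim with $\mu$ replaced by that $\hat\mu$, one gets $B(\hat\mu) \le \eta k$, so by the pigeonhole lemma of Step 1, $\|\hat\mu-\mu\| \le C_1(\sqrt{\Tr\Sigma/n} + \sqrt{\|\Sigma\|\log(1/\delta)/n})$ with probability $\ge 1-\delta$, which is the theorem. I expect the main obstacle to be Step 3: finding the spectral/stability strengthening of the Lugosi--Mendelson guarantee that simultaneously holds with probability $1-\delta$ and admits a constant-degree sum-of-squares proof --- together with the more routine but fiddly verification that ``minimize $\widetilde B$ over $\hat\mu$'' is genuinely a semidefinite program and that a usable center can be extracted from its solution.
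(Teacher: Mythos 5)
Your Steps 1, 2, and 4 are broadly aligned with the paper's plan (reduce to finding a central point, relax by SoS, read off $\pE x$ from the pseudodistribution). The genuine gap is in Step 3, which you correctly flag as the crux and which you resolve via a spectral stability lemma: \emph{with probability $1-\delta$ there is a fixed $G\subseteq[k]$, $|G|\ge 0.99k$, with $\|\sum_{i\in G}(Z_i-\mu)(Z_i-\mu)^\top\|\le O(r^2 k)$, obtained by discarding the largest-norm buckets and applying matrix concentration.} This lemma is false under a bare second-moment assumption, and the paper explicitly discusses exactly this failure mode in the paragraph ``On the Failure of Empirical Moments'': discarding a constant fraction by norm still does not match the required bound with probability $1-2^{-\Omega(k)}$. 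Concretely: take $Z-\mu$ uniform on $\{\sqrt d\, e_j : j\in[d]\}$ with $d=k^2$, so $\Sigma_Z = I$ and all $\|Z_i-\mu\|=\sqrt d$; truncation by norm is vacuous, and the operator norm of the truncated empirical covariance is $d\cdot\max_j|\{i\in G: j_i=j\}|$, whose deviation above any fixed constant multiple of $d+k$ happens with probability $k^{-O(1)}$, which is polynomially decaying in $k$ and hence far larger than $2^{-\Omega(k)}$ for $k$ large. More structurally, the per-sample bounded-difference constant of the truncated empirical covariance is $\Theta(\Tr\Sigma_Z)$, so McDiarmid/matrix-Bernstein give fluctuations of order $k\Tr\Sigma_Z$ at the required confidence, which exceeds the target $\Tr\Sigma_Z + k\|\Sigma_Z\|$ by a factor of roughly $d$. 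There is no exponential-tail spectral bound available here.

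The paper's route is genuinely different and is what you should be looking for. Rather than the spectral ($2\to 2$) norm, it bounds a \emph{fraction-valued} centrality SDP whose expectation is controlled via Nesterov's theorem by the $2\to 1$ norm $\|M\|_{2\to 1}=\max_{\|u\|=1}\sum_i|\iprod{Z_i-\mu,u}|$ (first powers, not squares), whose expectation \cref{lem:2-to-1} bounds by $2\sqrt{k\Tr\Sigma_Z}+k\sqrt{\|\Sigma_Z\|}$ via symmetrization and Ledoux--Talagrand contraction; the $1-2^{-\Omega(k)}$ probability then comes from the $1/k$ bounded-differences property of the SDP value (\cref{lem:bdd-differences}), not from any tail bound on a matrix norm. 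Separately, your Step 2 (``jointly minimizing over $\hat\mu$ and the pseudo-distribution is a single SDP'') is a minimax, not directly an SDP, and more importantly ``$\hat\mu$ is a good center'' is a universally quantified, non-polynomial condition. The paper instead encodes the \emph{dual certificate} of certifiable centrality (the numbers $\alpha,\beta,\gamma$ and the SoS polynomial $\sigma$ from \cref{lem:witness,lem:nice-witness}) as explicit variables of the polynomial system $\cA$, so that feasibility of $\cA$ (not an optimization over $\hat\mu$) captures being a certifiable center, and the rounding $\pE x$ is then analyzed by exhibiting a degree-$8$ SoS proof that any two points in $\cA$ are $O(r)$-close (\cref{lem:sos-median-proof}).
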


\paragraph{On constants and running times}
No effort has been made to optimize the constants $C_0,C_1,C_2$.
By careful analysis they may certainly be made less than $1000$, but we expect substantial improvements beyond this are possible.

Because of the large polynomial running time, we regard \cref{thm:main} as mainly \emph{a (constructive) proof of the existence of a polynomial-time algorithm:} of course we do not suggest anyone attempt to run an $(nd)^{1000}$-time algorithm in practice!
Polynomial-time algorithms are qualitatively different from exponential-time brute-force searches, however, and very often the insights from a slow polynomial-time algorithm can be leveraged to design a fast one, while the same cannot be said of a brute-force search procedure.
Thus, when addressing challenging algorithmic questions in high-dimensional statistics, the first question is whether there is a polynomial-time algorithm at all: \cref{thm:main} answers this affirmatively.

Indeed, \cref{thm:main} and the algorithm behind it have already inspired further investigation into the (rather distinct) question of just how fast an algorithm is possible.
After the present work was initially circulated, Cherapanamjeri, Flammarion, and Bartlett combined the ideas in our \cref{sec:cert-central} with a nonconvex gradient descent procedure to obtain an algorithm with the statistical same guarantees as \cref{thm:main} but with running time $O(n^{3.5} + n^2 d) \cdot (\log nd)^{O(1)}$ \cite{cherapanamjeri2019fast}.
It is more than plausible that further developments will lead to a truly practical algorithm (with running time, say, $nd \cdot \log(nd)^{O(1)}$ -- note that input vectors consist of $nd$ real numbers, so this running time would correspond to reading the data $\log(nd)^{O(1)}$ times).

\paragraph{Semidefinite programming, proofs to algorithms, and the sum of squares method}
Our algorithm is based on semidefinite programming (SDP).
It is not an attempt to directly compute the estimator proposed by Lugosi and Mendelson.
Instead, inspired by that estimator, we introduce \textsc{median-sdp}, a new semidefinite programming approach to computation of a high-dimensional median.
We hope that the ideas behind it will find further uses in algorithms for high-dimensional statistics.

Our SDP arises from the \emph{sum of squares (SoS)} method, which is a powerful and flexible approach to SDP design and analysis.
Rather than design an SDP from scratch and invent a new analysis, guided by the SoS method we construct an SDP whose variables and constraints allow for the \emph{proof} of Lugosi and Mendelson's \cref{thm:lugosi-mendelson} to translate directly to an \emph{analysis} of the SDP, proving our \cref{thm:main}.
(More prosaically: Lugosi and Mendelson's proof inspires the construction of a family of dual solutions to our SDP, which then we use to argue that it recovers a good estimate for the mean.)

This technique, which turns sufficiently-simple proofs of identifiability like the proof of \cref{thm:lugosi-mendelson} into algorithms as in \cref{thm:main}, has recently been employed in algorithm design for several computationally-challenging statistics problems.
For instance, recent works offer the best available polynomial-time guarantees for parameter estimation of high-dimensional mixture models and for estimation in Huber's contamination model \cite{huber1964robust, hopkins2018mixture, kothari2018robust, klivans2018efficient}.
SoS has also been key to progress in computationally-challenging tensor problems with statistical applications, such as tensor decomposition (a key primitive for moment-method algorithms in high dimensions) and tensor completion \cite{DBLP:conf/focs/MaSS16, DBLP:conf/colt/BarakM16, potechin2017exact}.
For further discussion see the survey \cite{RSS18}.
We expect many further basic statistical problems for which efficient algorithms are presently unknown to be successfully attackable with the SoS method.

\paragraph{Organization}
In the remainder of this introduction we discuss the \emph{median of means} estimation paradigm which underlies both Lugosi and Mendelson's estimator (\cref{thm:lugosi-mendelson}) and our own (\cref{thm:main}) and briefly introduce the SoS method, as well as offer some comparisons of the SDP used in this paper to some common SDPs employed in statistics.
Before turning to technical material, in \cref{sec:overview} we give a brief overview of our estimator.

In \cref{sec:cert-central}, we describe an algorithm for a twist on the mean estimation problem, called the \emph{certification} problem.
The main lemma analyzes an SDP whose solutions capture information about quantiles of a set of high-dimensional vectors.
It is the key tool in the design of our algorithm to estimate the mean.
This section requires no background on SoS.

Then, in \cref{sec:preliminaries} we give some formal definitions and standard theorems about SoS.
In \cref{sec:main-alg} we prove our main theorem from technical lemmas, whose proofs can be found in the appendix.

\subsection{The median of means paradigm}
\label[section]{sec:median-of-means-paradigm}
The \emph{median of means} is an approach to mean estimation for heavy-tailed distributions which combines the reduction in variance offered by averaging independent samples (thus achieving $1/\sqrt{n}$ convergence rates) with the outlier-robustness of the median (thus achieving $\sqrt{\log(1/\delta)}$ tail behavior) \cite{nemirovsky1983problem, jerrum1986random,alon1999space}.
Consider the $d=1$ case first.
Suppose $X_1,\ldots,X_n$ are i.i.d. copies of a real-valued random variable $X$ with mean $\mu \in \R$ and variance $\sigma^2$.
Let $k = \Theta(\log 1/\delta)$ be an integer, and for $i \leq k$ let $Z_i$ be the average of samples $X_{i \cdot n/k}$ to $X_{(i+1) \cdot n/k}$.\footnote{Throughout the paper we will assume that $n$ is divisible by $C \log(1/\delta)$ for an appropriate constant $C$. One may achieve this from general $n,k$ and $\delta \geq 2^{-O(n)}$ by throwing out samples to reach the nearest multiple of $C \log(1/\delta)$; the effect on the error rates is only a constant.}
Then it is an exercise to show that the median (or indeed any fixed quantile) of the $Z_i$'s satisfies
\[
\Pr \left \{ |\text{median}(Z_1,\ldots,Z_k) - \mu| > C \sigma \sqrt{\frac{\log(1/\delta)}{n}} \right \} \leq \delta
\]
for some universal constant $C$ (given the correct choice of $k$).
There are estimators achieving this $\sqrt{\log(1/\delta)}$ rate using ideas other than the median of means in the case $d=1$ \cite{catoni2012challenging, devroye2016sub}, but we focus here on median of means since it is the only approach known to prove a theorem like \cref{thm:lugosi-mendelson} in the high dimensional case.

Correctly extending this median of means idea to higher dimensions $d$ is not simple.
Suppose that $X$ is $d$-dimensional, with mean $\mu$ and covariance $\Sigma$.
Replacing $X_1,\ldots,X_n \in \R^d$ with grouped averages $Z_1,\ldots,Z_k \in \R^d$ remains possible, but the sticking point is to choose an appropriate notion of median or quantile in $d$ dimensions.

A first attempt would be to use as a median of $Z_1,\ldots,Z_k$ any point in $\R^d$ which has at most some distance $r$ to at least $ck$ of $Z_1,\ldots,Z_k$ for some $c > 1/2$.
Let us call such a point a \emph{simple $r$-median}.
It is straightforward to prove, by the same ideas as in the $d=1$ case, that
\[
  \Pr \left \{ \Norm{\mu - Z_i} > C \sqrt{\frac{\Tr \Sigma \log(1/\delta)}{n}} \text{ for at least $ck$ vectors $Z_i$} \right \} \leq \delta
\]
for some universal constant $C = C(c)$.
It follows that with probability at least $1-\delta$ the mean $\mu$ is a simple $r$-median for $r = C\sqrt{\Tr \Sigma \log(1/\delta) / n}$.
When $c > 1/2$, any two simple $r$-medians must each have distance at most $r$ to some $Z_i$, so by the triangle inequality,
\begin{align}\label[equation]{eq:simple-median}
  \Pr \left \{ \Norm{\text{simple $2r$-median} (Z_1,\ldots,Z_k) - \mu} > 2 C \sqrt{\frac{\Tr \Sigma \log(1/\delta)}{n}} \right \} \leq \delta
\end{align}
where $\text{simple $2r$-median}(Z_1,\ldots,Z_k)$ is any simple $2r$-median of $Z_1,\ldots,Z_k$.
At the cost of replacing $2r$ by $4r$, a simple $r$-median can be found easily in polynomial time (in fact in quadratic time) because if there is any simple $2r$-median of $Z_1,\ldots,Z_k$ then by triangle inequality some $Z_i$ must be a simple $4r$-median.

In prior work, Minsker shows that the geometric median of $Z_1,\ldots,Z_k$ achieves the same guarantee \cref{eq:simple-median} as the simple median (perhaps with a different universal constant $C$) \cite{minsker2015geometric}.
Geometric median is computable in nearly-linear time (that is, time $dk \cdot (\log dk)^{O(1)}$) \cite{DBLP:conf/stoc/CohenLMPS16}.

The guarantee \cref{eq:simple-median} represents the smallest confidence intervals previously known to be achievable by polynomial-time computable mean estimators under the assumption that $X$ has finite mean and covariance.
This tail bound is an intermediate between the $\sqrt{\Tr \Sigma / \delta n}$-style tail bound achieved by the empirical mean \cref{eq:bad-concentration} and the Gaussian-style guarantee of Lugosi and Mendelson from \cref{thm:lugosi-mendelson}.
It fails to match \cref{thm:lugosi-mendelson} because the $\log(1/\delta)$ term multiplies $\Tr \Sigma$ rather than $\|\Sigma\|$ -- this introduces an unnecessary dimension-dependence.
That is, if $X$ has covariance identity, then informally speaking the rate of tail decay has a dimension-dependent factor when it should be dimension-independent: it decays as $\exp(-ct^2/d)$ rather than $\exp(-ct^2)$ (where $c$ is some fixed constant).\footnote{Of course, formally we are talking about one estimator $\hat{\mu}_\delta$ for every $\delta$, so it is not correct to speak of tail decay with respect to $\delta$.}
This is not a failure of the analysis: if the approach is to draw a ball around the population mean $\mu$ which contains at least a constant fraction of $Z_1,\ldots,Z_k$ with probability $1-\delta$, the ball must have radius of order $\sqrt{\Tr \Sigma \log(1/\delta)/n}$, which grows with the dimension of $X$.

To prove \cref{thm:lugosi-mendelson}, Lugosi and Mendelson introduce a new notion of high-dimensional median, which arises from what they call a \emph{median of means tournament}.
This tournament median of $Z_1,\ldots,Z_k$ is
\begin{align}\label[equation]{eq:lm-estimator}
\arg \min_{x \in \R^d} \max_{y \in \R^d} \|x - y \| \text{ such that $\|Z_i - x\| \geq \|Z_i - y\|$ for at least $\tfrac k 2$ $Z_i$'s.}
\end{align}
Rephrased, the tournament median is the point $x \in \R^d$ minimizing the number $r$ such that for every unit $u \in \R^d$, the projection $\iprod{x,u}$ is at distance at most $r$ from a median of the projections $\{\iprod{Z_i,u}\}$.\footnote{Thanks to Jerry Li for pointing out this reinterpretation of the tournament median to me.}

In fact, Lugosi and Mendelson's arguments apply to any $x$ which \emph{$r$-central} in the following sense: for every unit $u$, there are at least $0.51k$ vectors among $Z_1,\ldots,Z_k$ such that $|\iprod{Z_i,u} - \iprod{x,u}| \leq r$.
Their proof shows that an estimator which outputs any $r$-central point will achieve the guarantee in \cref{thm:lugosi-mendelson}.
This interpretation shows that their estimator is related to a weak notion of Tukey median: a Tukey median (at least in the typical case that it has constant Tukey depth) should be between a $49$-th and $51$-st percentile in every direction $u$, while an $r$-central point has distance at most $r$ to such a percentile in every direction $u$ \cite{tukey1960survey}.
Thus our result \cref{thm:main} adds to several in the literature which demonstrate that although the Tukey median of vectors $v_1,\ldots,v_k \in \R^d$ is NP-hard to compute if $v_1,\ldots,v_k$ are chosen adversarially, under reasonable assumptions (in this case that $Z_1,\ldots,Z_k$ are i.i.d. from a distribution with bounded covariance) one may find some kind of approximate Tukey median in polynomial time \cite{bernholt2006robust,DBLP:conf/focs/DiakonikolasKK016,DBLP:conf/focs/LaiRV16}.

The heart of the proof of \cref{thm:lugosi-mendelson} shows that with probability at least $1-\delta$, the mean $\mu$ is $r$-central for $r = C(\sqrt{\Tr \Sigma / n} + \sqrt{\|\Sigma\| \log(1/\delta) / n})$.
The difficulty in \emph{computing} the tournament median -- or finding some $r$-central point -- comes from the fact that in each direction $u$ it may be different collection of $0.51k$ vectors which satisfy $|\iprod{Z_i,u} - \iprod{x,u}| \leq r$.
Thus even if an algorithm is given $Z_1,\ldots,Z_k$ \emph{and $\mu$}, to efficiently check that $\mu$ is a tournament median or is $r$-central seems naively to require brute-force search over $\exp(cd)$ directions in $\R^d$, for some fixed $c > 0$.
The heart of our algorithm is a semidefinite program which (with high probability) can efficiently \emph{certify} that $\mu$ is $r$-central: this algorithm is described in \cref{sec:cert-central}.

\subsection{Semidefinite programming and the SoS method in statistics}
One of the main tools in our algorithm is semidefinite programming, and in particular the sum of squares method.
Recall that a semidefinite program (SDP) is a convex optimization problem of the following form:
\begin{align}\label[equation]{eq:sdp}
\min_{X} \, \iprod{X,C} \text{ such that } \iprod{A_1,X} \geq 0, \ldots, \iprod{A_m,X} \geq 0 \text{ and } X \succeq 0
\end{align}
where $X$ ranges over symmetric $n \times n$ real matrices and $\iprod{M,N} = \Tr MN^\top$.
Subject to mild conditions on $C$ and $A_1,\ldots,A_m$, semidefinite programs are solvable to arbitrary accuracy in polynomial time \cite{boyd2004convex}.

Semidefinite programming as a tool for algorithm design has by now seen numerous uses across both theoretical computer science and statistics.
Familiar SDPs in statistics include the nuclear-norm minimization SDP, used for matrix sensing and matrix completion \cite{DBLP:journals/focm/CandesR09,DBLP:journals/tit/CandesT10}, the Goemans-Williamson cut SDP, variants of which are used for community detection in sparse graphs \cite{guedon2016community,montanari2016semidefinite,abbe2016exact}, SDPs for finding sparse principal components \cite{DBLP:journals/siamrev/dAspremontGJL07, DBLP:conf/isit/AminiW08, krauthgamer2015semidefinite}, SDPs used for high-dimensional change-point detection \cite{wang2018high}, SDPs used for optimal experiment design \cite{vandenberghe1998determinant}, and more.

While much work has focused on detailed analyses of a small number of canonical semidefinite programs -- the nuclear-norm SDP, the Goemans-Williamson SDP, etc. -- the SoS method offers a rich variety of semidefinite programs suited to many purposes \cite{shor1987approach,nesterov2000squared,lasserre2001global,parrilo2000structured}.
For every \emph{polynomial optimization problem with semialgebraic constraints}, SoS offers a \emph{hierarchy} of SDP relaxations.
That is, for every collection of multivariate polynomials $p,q_1,\ldots,q_m \in \R[x_1,\ldots,x_n]$ and every even $r \geq \max (\deg p, \deg q_1, \ldots, \deg q_m)$, SoS offers a relaxation of the problem
\[
\min p(x) \text{ such that } q_1(x) \geq 0, \ldots, q_m(x) \geq 0\mper
\]
As $r$ increases, the relaxations become stronger, more closely approximating the true optimum value of the optimization problem, but the complexity of the relaxations also increases.
Typically, the $r$-th relaxation is solvable in time $(nm)^{O(r)}$.
In many applications, such as when $q_1,\ldots,q_m$ include the constraints $x_i^2 - x \geq 0, x_i^2 - x \leq 0$ which imply $x \in \{0,1\}^n$, when $r = n$ the SoS SDP exactly captures the optimum of the underlying polynomial optimization problem.
However, the resulting SDP has at least $2^n$ variables, so is not generally solvable in polynomial time.
This paper focuses on SoS SDPs with $r = O(1)$ (in fact $r=8$), leading to polynomial-time algorithms.

SoS carries at least two advantages relevant to this paper over more classical approaches to semidefinite programming.
First is the flexibility which comes from the possibility of beginning with any set of polynomials $p,q_1,\ldots,q_m$; we choose polynomials which capture the idea of $r$-centrality.
Second is ease of analysis: SoS SDPs in statistical settings are amenable to an analysis strategy which converts proofs of statistical identifiability into analysis of an SDP-based algorithm by phrasing the identifiability proof as a dual solution to the SDP.
This style of analysis is feasible in our case because the SoS SDP has enough constraints that many properties of $r$-centrality carry over to the relaxed version: it is not clear whether a more elementary SDP would share this property.

\subsection{Algorithm Overview}
\label[section]{sec:overview}

Recall where we left off in \cref{sec:median-of-means-paradigm}.
Having taken samples $X_1,\ldots,X_n$ from a distribution with mean $\mu$ and covariance $\Sigma$ and averaged groups of $n/k$ of them to form vectors $Z_1,\ldots,Z_k$, the goal is to find a median of $Z_1,\ldots,Z_k$.
As we discussed, the appropriate notion of a median is any point $x \in \R^d$ which is $r$-central for $r = O(\sqrt{\Tr \Sigma / n} + \sqrt{\|\Sigma \| \log(1/\delta) / n})$, meaning that for every $1$-dimensional projection $\iprod{x,u},\iprod{Z_1,u},\ldots,\iprod{Z_k,u}$, the point $\iprod{x,u}$ has distance at most $r$ to a $0.51$-quantile of of $\{ \iprod{Z_i,u} \}$

Let us change the problem temporarily with a thought experiment: imagine being given $Z_1,\ldots,Z_k$ \emph{and} the population mean $\mu$ and being asked to verify (or in computer science jargon, \emph{certify}) that indeed $\mu$ is $r$-central.
Even for this apparently simpler task there is no obvious polynomial-time algorithm: a brute-force inspection of $\{\iprod{Z_i,u} \}$ for, say, all $u$ in an $\e$-net of the unit ball in $\R^d$ will require time $(1/\e)^d$.

Our first technical contribution is to show that with high probability over $Z_1,\ldots,Z_k$ there is a \emph{short certificate}, or \emph{witness}, to the fact that the population mean $\mu$ has distance at most $r$ to a median in every direction.
This certificate takes the form of a dual solution to a semidefinite relaxation of the following combinatorial optimization problem: given $Z_1,\ldots,Z_k,\mu$ and $r > 0$, maximize over all directions $u$ the number of $i \in [k]$ such that $\iprod{Z_i-\mu,u} \geq r$.
Solving this SDP gives an algorithm for the certification problem: we show that with probability at least $1-\delta$ the maximum value is at most $k/3$ for the choice of $r$ above.
We note that this SDP and its analysis do not rely on the SoS technology, so all of \cref{sec:cert-central} can be read without this background.

Returning to the problem of estimating $\mu$ given $Z_1,\ldots,Z_k$, the task is made simpler by the existence of the certificate that $\mu$ is $r$-central.
In particular, it gives a concrete object which our estimation algorithm can search for: we know it will suffice to find any point in the set:
\begin{align*}
  & \textsc{certifiable-centers} (Z_1,\ldots,Z_k)\\
  & = \{ (x,M) \, : \, x \in \R^d, M \in \R^{(d+k+1) \times (d+k+1)} \text{ certifies $x$ is $r$-central } \}\mcom
\end{align*}
which is nonempty because it in particular contains $(\mu,M_\mu)$, where $M_\mu$ is the aforementioned SDP dual solution.
(It is not yet obvious why $(d+k+1) \times (d+k+1)$ is the appropriate dimension for $M$; we will see this in the next section.)
Our second technical contribution is an algorithm which we call \textsc{median-sdp}, based on the SoS method, which takes $Z_1,\ldots,Z_k$ and finds $x' \in \R^d$ such that $\|x - x'\| = O(r)$ for every $x \in \textsc{certifiable-centers}$.

The algorithm is based on an SDP relaxation of the set \textsc{certifiable-centers}, this time based on the SoS method.
The relaxation is designed to accommodate the following kind of analysis: we turn the following simple argument about $r$-central points into a dual solution to the SDP (in the SoS context this object is called an \emph{SoS proof}), then use the latter to show that the SDP finds a good estimator $x$.

The argument which we must turn into an SoS proof is the following: if $x,x'$ are $r$-central then consider in particular the direction $v = (x-x')/\|x-x'\|$.
There exists some $Z_i$ such that $\iprod{x,v} \leq r + \iprod{Z_i,v}$ and $\iprod{x',-v} \leq r + \iprod{Z_i,-v}$.
Adding the inequalities gives $\iprod{x-x',v} = \|x - x'\| \leq 2r$.
When we make this argument into an SoS proof, it will imply (roughly speaking) not just when $x$ is $r$-central but also when $x$ is in our relaxation of the set \textsc{certifiable-centers}.

This strategy will rely crucially on both the existence of the certificate $\mu$ (needed to turn the above argument into an SoS proof) and the SoS strategy for designing SDPs (to accommodate the complexity of the resulting dual solution).
For more discussion, see \cref{sec:median-sdp}.

\section{Certifying Centrality}
\label[section]{sec:cert-central}

In this section we describe and analyze one of the key components of our algorithm: a semidefinite program to certify the main property of the population mean our algorithm exploits -- $(r,p)$-centrality.

\begin{definition}[Centrality]
  Let $Z_1,\ldots,Z_k \in \R^d$, $r > 0$, and $p \in [0,1]$.
  We say that $x \in \R^d$ is $(r,p)$-central (with respect to $Z_1,\ldots,Z_k$) if for every unit $u \in \R^d$ there are at most $pk$ vectors $Z_1,\ldots,Z_k$ such that $\iprod{Z_i-x,u} \geq r$.
\end{definition}

At the heart of Lugosi and Mendelson's mean estimator is the following remarkable lemma, characterizing centrality of the population mean.

\begin{lemma}[\cite{LM18}, rephrased]
\label[lemma]{fact:lm-central}
  Let $Z$ be a $d$-dimensional random vector with mean $\mu = \E Z$ and covariance $\Sigma$.
  Let $Z_1,\ldots,Z_k$ be i.i.d. copies of $Z$.
  With probability at least $1-2^{-\Omega(k)}$, the population mean $\mu$ is $(r,1/3)$-central with respect to $Z_1,\ldots,Z_k$, for $r = O(\sqrt{\Tr \Sigma / k} + \sqrt{\|\Sigma\|})$.\footnote{We write $f(n) = O(g(n))$ if there is a constant $C$ such that for all large-enough $n$ one has $f(n) \leq Cg(n)$. Similarly, we write $f = \Omega(g(n))$ if there is $c$ such that $f(n) \geq cg(n)$ for large-enough $n$. We write $f = \Theta(g(n))$ if both $f = O(g(n))$ and $f = \Omega(g(n))$.}
\end{lemma}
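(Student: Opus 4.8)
The plan is to establish $(r, 1/3)$-centrality of $\mu$ by controlling, uniformly over all unit directions $u$, the number of group-means $Z_i$ whose projection $\langle Z_i - \mu, u\rangle$ exceeds $r$. By the very definition of centrality it suffices to show that with probability $1 - 2^{-\Omega(k)}$, for every unit $u$ at most $k/3$ of the indices $i \in [k]$ satisfy $\langle Z_i - \mu, u\rangle \geq r$. I would organize this around a two-part estimate for $r$: a ``variance'' term $\sqrt{\Tr\Sigma/k}$ coming from the typical fluctuation size of a group mean, and an ``operator norm'' term $\sqrt{\|\Sigma\|}$ which governs the tail behavior in any single direction. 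Since each $Z_i$ is itself an average of $n/k$ samples of the original $X$, its covariance is $\tfrac{k}{n}\Sigma$; rescaling, the statement is equivalent (after absorbing constants) to the corresponding claim for i.i.d.\ copies of a mean-$\mu$, covariance-$\Sigma$ vector with $k$ samples, which is the form stated. So I would work directly with $Z_1, \ldots, Z_k$ having mean $\mu$ and covariance $\Sigma$.

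The core is a ``small-ball / one-direction'' argument followed by a union bound over a net, in the style of Lugosi--Mendelson. First, fix a unit $u$. The random variable $\langle Z_i - \mu, u\rangle$ has mean zero and variance $u^\top \Sigma u \leq \|\Sigma\|$; by Chebyshev, $\Pr\{\langle Z_i - \mu, u\rangle \geq r\} \leq \|\Sigma\|/r^2$, which is at most, say, $1/10$ once $r \geq C\sqrt{\|\Sigma\|}$ for a suitable constant. Then the count $N_u := |\{i : \langle Z_i - \mu, u\rangle \geq r\}|$ is a sum of $k$ independent Bernoullis each with mean $\leq 1/10$, so by a Bernstein/Chernoff bound $\Pr\{N_u \geq k/3\} \leq 2^{-\Omega(k)}$ for a single fixed $u$. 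The issue is that there are infinitely many directions. The standard fix is not a naive $\epsilon$-net (which fails because the relevant quantity is not Lipschitz in $u$ at the scale needed), but rather the Lugosi--Mendelson trick: bound the supremum over $u$ of $N_u$ by first truncating/clipping the linear functionals and using a symmetrization plus bounded-differences (or Rademacher complexity) argument so that $\E \sup_{u} (\text{centered truncated count})$ is controlled by a Gaussian-width–type quantity, which is exactly where the $\sqrt{\Tr\Sigma/k}$ term enters, while concentration of the supremum around its mean is handled by bounded differences (McDiarmid), giving the $2^{-\Omega(k)}$ failure probability.

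Concretely, the steps in order: (1) reduce to i.i.d.\ mean-$\mu$ covariance-$\Sigma$ vectors with $k$ samples by rescaling; (2) for each fixed $u$, apply Chebyshev in direction $u$ to get single-Bernoulli mean $\leq 1/10$ once $r \gtrsim \sqrt{\|\Sigma\|}$; (3) write $\sup_u N_u$ in terms of $\sup_u$ of an empirical process over the function class $\{z \mapsto \mathbf{1}[\langle z - \mu, u\rangle \geq r]\}$, replace the indicators by $1$-Lipschitz surrogates supported above level $r/2$, and bound the expected supremum of the centered surrogate process by symmetrization $+$ contraction, yielding a term $\lesssim \tfrac{1}{k}\E\sup_u \sum_i \varepsilon_i \langle Z_i - \mu, u\rangle = \tfrac{1}{k}\E\|\sum_i \varepsilon_i (Z_i - \mu)\| \leq \sqrt{\Tr\Sigma/k}$; choosing $r \gtrsim \sqrt{\Tr\Sigma/k}$ makes this contribution $\leq 1/10$ as well; (4) combine (2) and (3): $\E\sup_u N_u \leq k/5$; (5) apply McDiarmid (each $Z_i$ changes $\sup_u N_u$ by at most $1$) to conclude $\Pr\{\sup_u N_u \geq k/3\} \leq \exp(-\Omega(k))$; (6) take $r = C(\sqrt{\Tr\Sigma/k} + \sqrt{\|\Sigma\|})$ and read off the definition of $(r,1/3)$-centrality.

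The main obstacle is step (3)–(4): controlling the supremum over all directions without paying a dimension factor. A union bound over an $\epsilon$-net of the sphere would cost $\exp(\Omega(d))$ and force $r$ to scale with $\sqrt{d}$, which would give only the weaker ``simple median'' guarantee, not the $\sqrt{\Tr\Sigma/k} + \sqrt{\|\Sigma\|}$ rate. The delicate point is that the indicator $\mathbf{1}[\langle z-\mu,u\rangle \geq r]$ is discontinuous in $u$, so one must interpose a Lipschitz surrogate and carefully separate the ``few far outliers'' regime (handled by the variance bound in each direction) from the ``typical fluctuations'' regime (handled by the expected norm $\E\|\sum_i \varepsilon_i(Z_i-\mu)\|$). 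Getting the surrogate levels ($r$ vs.\ $r/2$) and the Bernoulli means (each $\leq 1/10$ so that their sum stays below $k/3$ with exponential margin) to line up is where all the care goes; the rest is routine concentration.
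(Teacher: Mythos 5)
Your proposal is correct and faithfully reconstructs the Lugosi--Mendelson argument from \cite{LM18}, which is exactly the source the paper cites for this lemma (the paper does not re-derive it). It is worth noting, though, that the paper's own strengthened version --- \cref{lem:sdp-central}, which implies \cref{fact:lm-central} because the centrality SDP relaxes the quadratic program \eqref{eq:qp} --- is proved by a somewhat different route in the expectation step. Where you handle the non-Lipschitzness of the indicator $\mathbf{1}[\langle Z_i - \mu, u\rangle \geq r]$ by interposing a Lipschitz surrogate before applying symmetrization and contraction, the paper instead uses the SDP constraint $\langle Z_i - \mu, W_i\rangle \geq r\, b_i$ as a Markov-type step to reduce the count directly to a $2\to 1$ norm, then invokes Nesterov's rounding theorem (\cref{thm:sos-2-to-1}) to bound the $2\to 1$ norm SDP value by the actual $\|M\|_{2\to 1}$, and finally bounds $\E\|M\|_{2\to 1}$ in \cref{lem:2-to-1}. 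Both arguments share the same skeleton --- symmetrization plus Ledoux--Talagrand contraction to get the $\sqrt{\Tr\Sigma/k}$ contribution, a directional second-moment bound (your Chebyshev step; the paper's $\sup_u \E|\langle Z,u\rangle| \leq \sqrt{\|\Sigma\|}$ centering) to get the $\sqrt{\|\Sigma\|}$ contribution, and bounded-differences/McDiarmid for the exponential tail --- but your surrogate-of-indicator route recovers only $(r,1/3)$-centrality of $\mu$, whereas the paper's $\ell_1$-based route is what makes the argument survive the passage to the convex relaxation, which is the point of the paper.
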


The main difficulty in proving \cref{fact:lm-central} (and our later algorithmic versions of it) is to simultaneously obtain the tight quantitative bound $r = O(\sqrt{\Tr \Sigma / k} + \sqrt{\|\Sigma\|})$ and the high probability $1-2^{-\Omega(k)}$.
Without both, one does not get an estimator matching \cref{thm:lugosi-mendelson}.

Suppose, as in the median of means paradigm, $Z$ is taken as the empirical average of $n/k$ i.i.d. copies $X_1,\ldots,X_{n/k}$ of another random vector $X$ having covariance $\Sigma'$.
Then $\Sigma = \tfrac kn \Sigma'$.
One may see that if $k = \Theta(\log(1/\delta))$ the mean $\mu$ is $(r,1/3)$-central for $r = O(\sqrt{\Tr \Sigma'/n} + \sqrt{\|\Sigma'\| \log(1/\delta)/n})$ with probability at least $1-\delta$.
Any two $(r,1/3)$ central points $x,y$ also have $\|x - y\| \leq 2r$ (see \cref{sec:overview}), and thus it follows that to obtain the guarantees of \cref{thm:lugosi-mendelson}, given $Z_1,\ldots,Z_k$ one only needs to output any $(r,1/3)$-central point.

\subsection{Certification and the Failure of Empirical Moments}
A natural avenue to designing an efficient algorithm matching \cref{thm:lugosi-mendelson} is to try to compute an $(r,1/3)$-central point given $Z_1,\ldots,Z_k$.
A first roadblock is that there is not an obvious efficient algorithm for the following apparently simpler problem: given $x \in \R^d$, decide whether $x$ is an $(r,1/3)$-central point -- brute-force search over $2^d$ one-dimensional projections must be avoided.
In this section we give an efficient algorithm for a slight twist of this problem, which we call the \emph{certification} problem.

\begin{problem}[Certification]
  Given $Z_1,\ldots,Z_k,x \in \R^d$ and $r > 0$ and $p \in [0,1]$, a certification algorithm may output \textsc{yes} or \textsc{do not know}.
  If the output is \textsc{yes}, then $x$ must be $(r,p)$-central with respect to $Z_1,\ldots,Z_k$.
  If the output is \textsc{do not know}, then $x$ may or may not be $(r,p)$-central.
\end{problem}

Our goal is to design a certification algorithm with parameters matching \cref{fact:lm-central}.
That is, we would like a certification algorithm which outputs \textsc{yes} with probability at least $1-2^{-\Omega(k)}$ over $Z_1,\ldots,Z_k$ when given $x = \mu$ and $r = O(\sqrt{\Tr \Sigma / k} + \sqrt{\|\Sigma\|})$ and $p$ a small constant.
This is an easier task than deciding $(r,p)$-centrality exactly, since we care only about those configurations of $Z_1,\ldots,Z_k$ which may arise as i.i.d. copies of a random vector $Z$ with covariance $\Sigma$, and even when $\mu$ is $(r,p)$-central we allow the algorithm to output \textsc{do not know}, so long as this does not happen too often.
We prove the following theorem, which we view as an algorithmic version of \cref{fact:lm-central}.

\begin{theorem}\label[theorem]{thm:sdp-central}
  There is an algorithm for the certification problem with running time $(kd)^{O(1)}$ and the guarantee that if $Z_1,\ldots,Z_k$ are i.i.d. copies of a random variable $Z$ with mean $\mu$ and covariance $\Sigma$ then the algorithm outputs \textsc{yes} with probability at least $1-2^{-\Omega(k)}$ given $p = 1/100$\footnote{The constant $1/100$ differs from the $1/3$ in \cref{fact:lm-central} only for technical convenience later in this paper.}
 and $r = O(\sqrt{\Tr \Sigma / k} + \sqrt{\|\Sigma\|})$.
\end{theorem}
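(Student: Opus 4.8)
The plan is to realize the certification algorithm as a semidefinite relaxation of a combinatorial maximum -- which makes soundness immediate -- and then to prove completeness (that at $x = \mu$ the relaxation value is small with probability $1 - 2^{-\Omega(k)}$) by converting the Lugosi--Mendelson proof of \cref{fact:lm-central} into a dual solution of that SDP.

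\textbf{The algorithm.} Note that $x$ is $(r,p)$-central exactly when
\[
  N(x,r) \;=\; \max_{\Norm{u} \le 1} \; \Bigabs{ \Set{ i \in [k] \,:\, \iprod{Z_i - x, u} \ge r } } \;\le\; pk \,.
\]
I would take the algorithm to be a semidefinite relaxation of $N(x,r)$: with a vector variable $u \in \R^d$ and indicator variables $b_1,\dots,b_k$, maximize $\sum_i b_i$ subject to $\Norm{u}^2 \le 1$, $b_i^2 = b_i$, and $b_i\iprod{Z_i - x, u} \ge r\,b_i$. Setting $b_i = \Ind\brac{\iprod{Z_i-x,u}\ge r}$ with $u$ a maximizing unit vector shows this quantity equals $N(x,r)$; the non-convex constraints $b_i^2 = b_i$ make it a genuine relaxation target, and the standard degree-$2$ pseudo-moment SDP relaxation -- whose matrix variable $M$ ranges over $(d+k+1)\times(d+k+1)$ matrices indexed by $(1, u_1,\dots,u_d, b_1,\dots,b_k)$ -- has optimum $\val \ge N(x,r)$ and is solvable to additive accuracy $o(k)$ in time $(kd)^{O(1)}$. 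The algorithm solves it and outputs \textsc{yes} iff $\val \le pk$; this is sound since $\val \ge N(x,r)$ (and the $o(k)$-accuracy computation suffices because the completeness bound below is bounded away from $pk$). It is worth contrasting this with the ``failure of empirical moments'': a seemingly simpler certificate -- testing $\Norm{\tfrac1k\sum_i (Z_i-x)(Z_i-x)^\top} \lesssim r^2$ -- fails, because the empirical second moment of a heavy-tailed vector need not concentrate around $\Sigma$; the $b_i$-indexed SDP is set up precisely so its dual certificate can be built from facts that hold with probability $1-2^{-\Omega(k)}$ and no dimension dependence.

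\textbf{Completeness via a dual certificate.} The content is that for $x = \mu$, $p = 1/100$, and $r = O(\sqrt{\Tr\Sigma/k} + \sqrt{\Norm{\Sigma}})$, one has $\val \le k/200$ with probability $1 - 2^{-\Omega(k)}$. By weak SDP duality it suffices to produce, for the realized (``good'') sample, a feasible dual solution of value $\le k/200$: nonnegative multipliers for the constraints $b_i\iprod{Z_i-\mu,u} - r\,b_i \ge 0$, the equalities $b_i^2 = b_i$, and $1 - \Norm{u}^2 \ge 0$, plus a positive semidefinite matrix, witnessing that $\tfrac{k}{200} - \sum_i b_i$ equals the resulting combination. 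I would build these multipliers by transcribing the three ingredients of the Lugosi--Mendelson proof. (i) The indicator $\Ind[t\ge r]$ is dominated by a bounded, $O(1/r)$-Lipschitz surrogate $\varphi$ supported on $[r/2,\infty)$ with $\varphi\le1$: boundedness of $\varphi$ is reflected by $b_i\le1$ (an SoS consequence of $b_i^2 = b_i$), and the domination by the threshold constraint $b_i\iprod{Z_i-\mu,u}\ge r\,b_i$. (ii) A one-direction second-moment (Chebyshev) bound gives $\E\,\varphi(\iprod{Z-\mu,u}) \le 4\iprod{u,\Sigma u}/r^2 \le 4\Norm{\Sigma}/r^2$; this is the piece matched against $1-\Norm{u}^2\ge0$. (iii) -- the heart of the matter -- a \emph{dimension-free} uniform bound on $\sup_{\Norm{u}\le1}\tfrac1k\sum_i[\varphi(\iprod{Z_i-\mu,u}) - \E\varphi]$: bounded-differences (McDiarmid) concentration, using that each summand lies in $[0,1/k]$, controls the fluctuation by $O(\sqrt{\log(1/\delta)/k})$ with \emph{no} $d$-dependence, while symmetrization and $1$-Lipschitz contraction bound the mean by $\tfrac{O(1)}{rk}\,\E\Norm{\sum_i\epsilon_i(Z_i-\mu)} \le \tfrac{O(1)}{r}\sqrt{\Tr\Sigma/k}$. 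For $\delta = 2^{-\Omega(k)}$ and the stated $r$ these combine so the good sample satisfies $\sup_{\Norm{u}\le1}\sum_i\varphi(\iprod{Z_i-\mu,u}) \le k/200$, and the remaining work is to repackage this quantitative fact, together with the elementary chain ``$\tfrac r2\sum_i b_i \le \sum_i b_i\iprod{Z_i-\mu,u} = \iprod{\sum_i b_i(Z_i-\mu),u}$, bounded via Cauchy--Schwarz/AM--GM and $\Norm{u}^2\le1$'', as an explicit positive-semidefinite-plus-multipliers identity.

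\textbf{The main obstacle.} The crux is to have these Lugosi--Mendelson-strength probabilistic facts -- the tight radius $r = O(\sqrt{\Tr\Sigma/k} + \sqrt{\Norm{\Sigma}})$, \emph{simultaneously} with failure probability $2^{-\Omega(k)}$ and \emph{no} dependence on $d$ -- certify a bound on the \emph{relaxed} optimum $\val$, not merely on $N(\mu,r)$. A naive dual certificate (equal multipliers on the $k$ threshold constraints) only sees the empirical mean $\tfrac1k\sum_i(Z_i-\mu)$, whose norm does \emph{not} concentrate with probability $2^{-\Omega(k)}$ for heavy-tailed $Z$; the multipliers must instead be chosen data-dependently, in the refined way the Lugosi--Mendelson chaining/bounded-differences argument dictates, and if one prefers a polynomial surrogate to a piecewise-linear one the argument must additionally set aside an $O(\epsilon k)$-size set of large-norm samples (few, by Chebyshev plus a Chernoff bound) and bound their contribution by $\epsilon k$ using only $b_i\le1$. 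Having assembled the multipliers, the final delicate step is to verify that the leftover quadratic form $\tfrac{k}{200} - \sum_i b_i - (\text{multiplier-weighted constraints})$ in $(1,u,b)$ is genuinely positive semidefinite -- this is exactly what forces the SDP to carry the constraints above, and is where the constants $p=1/100$ and the hidden constant in $r$ get consumed.
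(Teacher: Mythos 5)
Your algorithm and its soundness half are exactly the paper's: the same degree-$2$ SDP relaxation of the quadratic program for $(r,p)$-centrality, with \textsc{yes} output iff the relaxed optimum is at most $pk$. You have also correctly located the crux --- getting the Lugosi--Mendelson-strength bound to apply to the \emph{relaxed} optimum rather than to $N(\mu,r)$ --- but your plan for resolving it has a genuine gap. You propose to establish, with probability $1-2^{-\Omega(k)}$, the combinatorial bound $\sup_{\|u\|\le1}\sum_i\varphi(\iprod{Z_i-\mu,u})\le k/200$ (via the surrogate $\varphi$, Chebyshev, symmetrization/contraction, and McDiarmid applied to the empirical-process supremum), and then to ``repackage'' this as a dual certificate for the SDP. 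That repackaging is not a routine verification: a bound on the true (rank-one) optimum of a nonconvex program never by itself yields a dual certificate for its SDP relaxation --- that is precisely the integrality-gap question, and the paper explicitly warns that a priori the SDP value could be $1$ even when the quadratic program's value is at most $1/3$. Moreover your ingredients (ii) and (iii) are statements about the population and about deviations from population means, whereas a dual certificate must be a deterministic algebraic identity in the realized $Z_i$'s; and there is no place in the SDP's constraint structure to insert the bounded Lipschitz surrogate $\varphi$ --- the Cauchy--Schwarz chain you sketch discards the truncation and reduces to controlling $\Norm{\sum_i b_i(Z_i-\mu)}$, which for heavy-tailed $Z$ does not concentrate at the $2^{-\Omega(k)}$ level.

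The paper closes this gap by moving the concentration argument from the empirical process to the SDP value itself. \cref{lem:bdd-differences} shows the SDP optimum changes by at most $1/k$ when a single $Z_i$ is replaced (the constraint $B_{ii}\le1$ caps each sample's influence at $1/k$), so McDiarmid applies directly to $\mathrm{SDP}(Z_1,\dots,Z_k,\mu,r)$ and one only needs $\E\,\mathrm{SDP}\le1/200$. The expectation bound is then deterministic-plus-first-moment: the SDP objective is at most $\tfrac1{kr}$ times the value of the $2\to1$-norm SDP of \cref{def:2-to-1-sdp}, Nesterov's theorem (\cref{thm:sos-2-to-1}) bounds that relaxation by $\sqrt{\pi/2}\,\|M\|_{2\to1}$ for \emph{every} matrix $M$, and $\E\|M\|_{2\to1}\le2\sqrt{k\Tr\Sigma}+k\sqrt{\|\Sigma\|}$ follows from exactly your symmetrization/contraction computation (\cref{lem:2-to-1}). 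The surrogate $\varphi$ and the Chebyshev step disappear entirely; the choice of $r$ absorbs the constants. To rescue your outline, replace step (iii)'s McDiarmid-on-the-supremum with bounded differences of the SDP value, and replace the dual-certificate construction with the comparison to the $2\to1$-norm SDP plus Nesterov.
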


Our algorithm for the certification problem will be based on semidefinite programming.
While our final algorithm to estimate $\mu$ (\cref{thm:main}) will not directly employ this certification algorithm as a subroutine, the semidefinite program we analyze for the latter is at the heart of the former.

\paragraph{On the Failure of Empirical Moments}
Before we describe our certification algorithm and prove \cref{thm:sdp-central}, we offer some intuition as to why a powerful tool such as semidefinite programming is necessary, by assessing simpler potential approaches to certification.
A natural approach would involve the maximum eigenvalue $\lambda = \|\overline{\Sigma}\|$ of the empirical covariance $\overline{\Sigma} = \frac 1k \sum_{i=1}^k (Z_i - \mu)(Z_i - \mu)^\top$.
If a unit vector $u$ has $\iprod{Z_i-\mu,u} \geq r$ for more than $k/3$ vectors $Z_i$ (thus violating $(r,1/3)$-centrality), then $\frac 1 k \sum \iprod{Z_i - \mu,u}^2 \geq r^2/3$.
Thus the maximum eigenvalue $\lambda$ (which is of course computable in polynomial time) would certify that $\mu$ is $(O(\sqrt{\lambda}),1/3)$-central.

Unfortunately, because of our weak assumptions on $Z$ -- again, we only assume the second moment $\Sigma$ exists -- the maximum eigenvalue of the empirical covariance is poorly concentrated: for instance, with probability about $2^{-k}$ some vector $Z_i$ may have norm as large as $\sqrt{\Tr \Sigma} \cdot 2^k$, resulting in $\sqrt{\lambda} \geq \sqrt{\Tr \Sigma} \cdot 2^{k/2}$.
(Indeed, even the typical value of $\sqrt{\lambda}$ could be much larger than $\sqrt{\Tr \Sigma / k} + \sqrt{\|\Sigma\|}$.)
Straightforward approaches to address this -- e.g. discarding a constant fraction of the samples $Z_1,\ldots,Z_k$ of largest norm, or replacing the second moment $\frac 1k \sum_{i=1}^k \iprod{Z_i-\mu,u}^2$ with the first moment $\frac 1k \sum_{i=1}^k |\iprod{Z_i-\mu,u}|$ -- offer some quantiative improvement over the empirical covariance, but still do not match the $\sqrt{\Tr \Sigma / k} + \sqrt{\|\Sigma\|}$ bound with probability $1-2^{-\Omega(k)}$ which we are aiming for.
Our semidefinite programming-based algorithm for certification can be viewed as a more sophisticated approach to improve the outlier-robustness of the maximum eigenvalue of the empirical covariance.

\subsection{The Centrality SDP}
We turn to our certification algorithm and the proof of \cref{thm:sdp-central}.
To start, we design a convex relaxation of the following (non-convex) optimization problem, which captures centrality:
given $Z_1,\ldots,Z_k,x$ and $r \geq 0$, find the minimum $p$ such that $x$ is $(r,p)$-central.
Or, rephrased, find the \emph{maximum} over directions $u$ of the number of $Z_i$ such that $\iprod{Z_i - x,u} \geq r$.
The latter we capture as the following \emph{quadratic program}.

\begin{fact}\label[fact]{fact:qp}
  The minimum $p$ such that $x$ is $(r,p)$-central with respect to $Z_1,\ldots,Z_k \in \R^d$ is given by the optimum of the following quadratic program in variables $b_1,\ldots,b_k$ and $u_1,\ldots,u_d$.
  \begin{align}
  & \max_{u,b} \frac 1k \sum_{i=1}^k b_i  \quad \text{such that} \label{eq:qp} \\
  & b_1,\ldots,b_k \in \{0,1\} \nonumber \\
  & \|u\|^2 \leq 1 \nonumber \\
  & b_i \iprod{Z_i -x,u} \geq b_i r \quad \text{ for } i = 1,\ldots,k\mper \nonumber
  \end{align}
\end{fact}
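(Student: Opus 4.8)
The plan is to prove the claimed identity by establishing two inequalities between the optimum of \eqref{eq:qp} and the quantity $p^\star := \min\{p : x \text{ is } (r,p)\text{-central with respect to } Z_1,\ldots,Z_k\}$. First I would unwind the definition of centrality: for a unit vector $u$, write $S_u := \{i \in [k] : \iprod{Z_i - x, u} \geq r\}$, so that $x$ is $(r,p)$-central exactly when $|S_u| \leq pk$ for every unit $u$; hence $p^\star = \tfrac1k \max_{\|u\|=1} |S_u|$. The task is then to show the quadratic program computes this maximum, with the key point being that once $u$ is fixed the integer variables $b_1,\ldots,b_k$ decouple, so there is no loss in the $\{0,1\}$ relaxation.

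For the bound ``QP optimum $\geq p^\star$'', given any unit $u$ I would exhibit the feasible point $b_i = \Ind[i \in S_u]$: each constraint $b_i \iprod{Z_i - x, u} \geq b_i r$ then reads $\iprod{Z_i-x,u}\geq r$ when $i \in S_u$ and $0 \geq 0$ otherwise, and $\|u\|^2 \le 1$ holds, so this is feasible with objective $\tfrac1k |S_u|$; maximizing over $u$ gives the inequality. For the reverse bound ``QP optimum $\leq p^\star$'', I would take an arbitrary feasible $(u,b)$ and first reduce to the case $\|u\| = 1$: if $u = 0$ then (using $r > 0$) every constraint forces $b_i = 0$ and the objective vanishes, while if $0 < \|u\| < 1$ then replacing $u$ by $u/\|u\|$ preserves feasibility since $b_i \iprod{Z_i - x, u/\|u\|} = \|u\|^{-1} b_i \iprod{Z_i - x, u} \geq \|u\|^{-1} b_i r \geq b_i r$ (here using $b_i r \geq 0$). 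With $\|u\| = 1$, the constraint $b_i \iprod{Z_i - x, u} \geq b_i r$ shows $b_i = 1 \Rightarrow i \in S_u$, whence $\sum_i b_i \leq |S_u| \leq p^\star k$ and the objective is at most $p^\star$. Combining the two inequalities yields the claim.

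I do not expect a genuine obstacle here — the statement is essentially a definitional reformulation — but the two mildly delicate points to get right are the rescaling of $u$ to unit norm (which relies on $r \geq 0$ and $b_i \geq 0$, and on treating the degenerate case $u=0$ separately), and the observation that tightness of the $0/1$ constraint is automatic because, for fixed $u$, the objective is separable in the $b_i$ and each is individually maximized by $\Ind[i \in S_u]$.
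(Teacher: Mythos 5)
Your proof is correct. The paper states \cref{fact:qp} without proof, treating it as an immediate consequence of the definition of $(r,p)$-centrality, so there is no paper argument to compare against; your two-inequality argument is exactly the straightforward way to formalize the equivalence. You are careful about the two points that actually require a word: the case $u=0$ (where $r>0$, as stipulated in the definition of centrality, forces $b_i=0$) and the rescaling of a non-unit $u$ to the sphere using $b_i r\ge 0$. Both are handled correctly.
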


We relax the quadratic program \eqref{eq:qp} to a semidefinite program in standard fashion.

\begin{definition}[Centrality SDP]
  Given $Z_1,\ldots,Z_k,x \in \R^d$ and $r \geq 0$, we define a semidefinite program over $(d+k+1) \times (d+k+1)$ positive semidefinite matrices with the following block structure:
  \[
  Y(B,W,U,b,u) = \left ( \begin{array}{ccc} 1 & b^\top & u^\top\\
                                            b & B & W \\
                                            u & W^\top & U \end{array} \right )
  \]
  where $B \in \R^{k \times k},U \in \R^{d \times d},b \in \R^k, u \in \R^d$.
  As usual, the intended solutions of the SDP are rank-one matrices $(1,b,u)(1,b,u)^\top$ where $(b,u) \in \R^{d+k}$ is a solution to \eqref{eq:qp}.
  The SDP is:
  \begin{align*}
  & \max_{Y(B,W,U,b,u)} \, \frac 1k \sum_{i=1}^k b_i \quad \text{such that}\\
  & B_{ii} \leq 1 \quad \text{ for } i = 1,\ldots,k\\
  & \Tr U \leq 1\\
  & \iprod{Z_i - x, W_i} \geq r \cdot b_i \text{ for } i = 1,\ldots,k\\
  & Y(B,W,U,b,u) \succeq 0\mper
  \end{align*}
  Here $W_i$ is the $i$-th row of the $k \times d$ matrix $W$.
  It stands in for the vector $b_i \cdot u$ in \eqref{eq:qp}.\footnote{We remark that a more traditional SDP relaxation might only involve the large $(d+k) \times (d+k)$ block of $Y$, replacing $b_i$ with $B_{ii}$ in all constraints. However, the extra row and column $(1,b,u)$ will be of some technical use later in this paper; it is possible with some technical modifications to other proofs they could be removed.}
\end{definition}

The centrality SDP is a relaxation of centrality proper: there is no \emph{a priori} reason to believe that it faithfully captures the quadratic program \eqref{eq:qp}.
For instance, it could be that for most $Z_1,\ldots,Z_k$ the $r$-centrality SDP value is $1$, even though \cref{fact:lm-central} says that with high probability the value of \eqref{eq:qp} is at most $1/3$ in the median of means setting (for appropriate choice of $r$).

Remarkably, the opposite is true: at least in our median of means setting, the centrality SDP is a good approximation to the quadratic program it relaxes.\footnote{Here we do not mean approximation in the sense the word is used in \emph{approximation algorithms}, since we are studying only the behavior of the SDP for $Z_1,\ldots,Z_k$ being a collection of random vectors, and we prove only high probability guarantees, rather than probability-$1$ guarantees.}
This is captured by the following key technical lemma, from which \cref{thm:sdp-central} follows immediately (because the SDP can be solved in polynomial time \cite{boyd2004convex}).

\begin{definition}[Certifiable Centrality]
  Let $Z_1,\ldots,Z_k \in \R^d$, $r > 0$, and $p \in [0,1]$.
  We say that $x \in \R^d$ is \emph{certifiably} $(r,p)$-central (with respect to $Z_1,\ldots,Z_k$) if the value of the centrality SDP with parameters $Z_1,\ldots,Z_k,x,r$ is at most $p$.
\end{definition}

\begin{lemma}
\label[lemma]{lem:sdp-central}
  Let $Z$ be a $d$-dimensional random vector with mean $\mu = \E Z$ and covariance $\Sigma$.
  Let $Z_1,\ldots,Z_k$ be i.i.d. copies of $Z$.
  With probability at least $1-2^{-\Omega(k)}$, $\mu$ is certifiably $(O(\sqrt{\Tr \Sigma/k} + \sqrt{\|\Sigma\|}), 1/100)$-central.
\end{lemma}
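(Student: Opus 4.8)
The plan is to exhibit, with probability $1-2^{-\Omega(k)}$, an explicit \emph{dual solution} (i.e.\ a feasible solution to the dual of the centrality SDP) whose objective value is at most $1/100$; by weak duality this certifies that the SDP value is at most $1/100$, which is exactly the statement that $\mu$ is certifiably $(r,1/100)$-central. The construction of this dual solution should mirror the proof of Lugosi and Mendelson's \cref{fact:lm-central}, translated into the SDP framework. Concretely, the SDP we relax has a primal objective $\frac1k\sum b_i$ and constraints $B_{ii}\le 1$, $\Tr U\le 1$, $\iprod{Z_i-\mu,W_i}\ge r\,b_i$, and $Y\succeq 0$. Dualizing, a dual solution assigns multipliers $\alpha_i\ge 0$ to the constraints $B_{ii}\le 1$, a multiplier $\beta\ge 0$ to $\Tr U\le 1$, and multipliers $\gamma_i\ge 0$ to $\iprod{Z_i-\mu,W_i}\ge r b_i$, and must satisfy a psd constraint of the form $C - \sum_i \alpha_i E_{ii}^B - \beta\,\mathrm{Id}^U - \sum_i \gamma_i(\text{coupling terms between }W_i\text{ and }(Z_i-\mu)) \succeq 0$, where $C$ encodes the objective $-\frac1k\sum b_i$ on the appropriate entries. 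The dual objective is then $\frac1k\sum_i \alpha_i + \beta$ (roughly), and our task is to choose $\alpha_i,\beta,\gamma_i$ making the Schur complement psd while keeping this small.

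The key step is choosing the multipliers. Following the Lugosi--Mendelson argument, the natural choice sets $\gamma_i$ proportional to a soft indicator that $Z_i$ is an ``outlier'' in a data-dependent way, and couples the $u$-block to $\sum_i \gamma_i r^{-1}(Z_i-\mu)(Z_i-\mu)^\top$, so that the psd constraint reduces (via a Schur-complement computation) to controlling the operator norm of a truncated empirical covariance-type matrix $\frac1k\sum_i \phi(Z_i-\mu)(Z_i-\mu)^\top$ where $\phi$ is a bounded truncation. The reason the naive maximum-eigenvalue approach fails (as the paper explains in the ``Failure of Empirical Moments'' discussion) is that the untruncated $\frac1k\sum(Z_i-\mu)(Z_i-\mu)^\top$ is badly concentrated; the dual solution must therefore build in the truncation, and the psd constraint must be slack enough to absorb the cross terms. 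I expect the right quantitative bound $r=O(\sqrt{\Tr\Sigma/k}+\sqrt{\|\Sigma\|})$ to come out of a matrix-Bernstein or matrix-Chernoff bound applied to the truncated summands together with a one-dimensional (per-direction) bound handled by the psd-ness of the Schur complement rather than by a union bound over a net — this is precisely where the SDP buys us something the net-based argument cannot.

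The main obstacle will be verifying that the candidate Schur complement is actually psd with probability $1-2^{-\Omega(k)}$ while simultaneously keeping the dual objective below $1/100$: there is a tension between making the truncation threshold small (good for concentration of the truncated covariance) and large (needed so that the $\gamma_i\iprod{Z_i-\mu,W_i}\ge rb_i$ constraints are not ``wasted'' on discarded samples, which would force $\sum\alpha_i$ up). Getting both $r=O(\sqrt{\Tr\Sigma/k}+\sqrt{\|\Sigma\|})$ \emph{and} exponentially small failure probability \emph{and} objective $\le 1/100$ requires calibrating the truncation level, the constant hidden in $r$, and the multipliers together — essentially a careful quantitative re-run of the Lugosi--Mendelson argument in which every step is an inequality between matrices rather than between scalars. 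I would first carry out the Schur-complement reduction to isolate exactly which random matrix must have small operator norm, then prove that operator-norm bound via truncation plus matrix concentration, then back out the admissible range of $r$ and check the objective. If the psd verification proves too delicate directly, a fallback is to first prove \cref{thm:sdp-central} with $p$ a small universal constant via a cruder dual solution and then drive $p$ down to $1/100$ by grouping the $Z_i$ into $\Theta(1)$-size blocks and re-averaging, at the cost of adjusting constants in $r$.
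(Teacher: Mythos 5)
Your overall framing---certify that $\mu$ is $(r,1/100)$-central by exhibiting an upper bound on the centrality-SDP value---is of course the right target, but the two technical pillars you lean on (explicit dual construction, and matrix-Bernstein/Chernoff on a truncated covariance) are not what the paper uses, and the second one would actually fail. Let me spell out the gap.

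\paragraph{Where the concentration argument breaks.}
You propose to get the $1-2^{-\Omega(k)}$ guarantee by applying matrix concentration to a truncated empirical-covariance-type matrix. Under only a second-moment assumption this cannot deliver what the lemma needs. Matrix Bernstein/Chernoff bounds carry a $\log d$ factor (the ambient dimension), whereas the lemma must be dimension-free; worse, with just finite covariance the truncated matrix's deviation from its mean is only controlled at polynomial-in-$k$ rates unless the truncation threshold is taken so small that too much of the sample is discarded, at which point the dual certificate stops certifying anything useful. You flag this ``tension,'' but it is not a matter of calibration---it is a regime where matrix concentration is simply the wrong tool. The paper's route is fundamentally different and is what makes the result work: it first shows that the \emph{expected} SDP value is at most $1/200$, and then observes that the centrality SDP value, viewed as a function of $(Z_1,\dots,Z_k)$, satisfies a bounded-differences property---replacing a single $Z_i$ changes the optimum by at most $1/k$ (\cref{lem:bdd-differences}). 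McDiarmid's inequality then gives exponential concentration of the SDP value around its expectation with no distributional assumptions beyond independence. The exponential rate comes from the Lipschitz structure of the convex program, not from concentration of any random matrix.

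\paragraph{Where the expectation bound differs.}
For the expectation step, the paper does not go through a Schur complement of a truncated covariance. It instead bounds the SDP objective via the constraint $\iprod{Z_i-\mu,W_i}\geq r\,b_i$, relaxes to the $2\to 1$-norm SDP over $\cS^{2\to 1}_{k,d}$ (\cref{def:2-to-1-sdp}), invokes Nesterov's $\sqrt{\pi/2}$-approximation theorem (\cref{thm:sos-2-to-1}) to relate that SDP value to the genuine $2\to 1$ norm $\|M\|_{2\to 1}$ of the matrix with rows $Z_i-\mu$, and then bounds $\E\|M\|_{2\to 1}\leq 2\sqrt{k\Tr\Sigma}+k\sqrt{\|\Sigma\|}$ by symmetrization and Ledoux--Talagrand contraction (\cref{lem:2-to-1}). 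This is a first-moment (in-expectation) bound, which is exactly why it survives a second-moment assumption on $Z$: there is no need for high-probability control of any spectral quantity. Your dual-certificate framing is not wrong in principle---Nesterov's theorem is in effect supplying the dual object---but the decisive step you are missing is the decoupling of ``bound the expectation via the $2\to 1$ norm'' from ``concentrate via bounded differences of the SDP value,'' which is what lets both $r=O(\sqrt{\Tr\Sigma/k}+\sqrt{\|\Sigma\|})$ and $2^{-\Omega(k)}$ coexist under such weak moment assumptions.
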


Since the centrality SDP can be solved in polynomial time, \cref{lem:sdp-central} comprises an analysis of the following algorithm for the certification problem: given $Z_1,\ldots,Z_k,x$, solve the centrality SDP, and output \textsc{yes} if the optimum value is at most $1/100$ (otherwise output \textsc{do not know}).

In the rest of this section we prove \cref{lem:sdp-central}.
The proof follows a similar strategy to that used by Lugosi and Mendelson to prove \cref{fact:lm-central}.
We find it surprising that this is possible, given that Lugosi and Mendelson's argument only needs to address the quadratic program \eqref{eq:qp} (almost equivalently, it would only address rank-one solutions to the centrality SDP), while we need to argue about all relaxed solutions.

We will be able to establish, however, that the properties of \eqref{eq:qp} used by (an adaptation of) Lugosi and Mendelson's proof also hold for the centrality SDP.
In particular, we will use a bounded-differences property of the centrality SDP to establish concentration.
While bounded-differences arguments are standard, using bounded differences to show exponential concentration of the optimum value of a convex program appears to be novel.

\subsection{Proof of \cref{lem:sdp-central}}

We need to assemble a few tools for the proof of \cref{lem:sdp-central}.
The first concern the $2 \rightarrow 1$ norm of a matrix -- in particular, we will be interested in the matrix $M$ with rows $Z_1,\ldots,Z_k$.

For our purposes, the $2 \rightarrow 1$ norm of $M$ serves as a moderately outlier-robust modification of the spectral norm (a.k.a. $2 \rightarrow 2$ norm) of the empirical covariance of $Z_1,\ldots,Z_k$.
This robustness is achieved by replacing an $\ell_2$ norm with an $\ell_1$ norm.
We say ``moderately'' outlier robust because under our $2$nd moment assumption on $Z_1,\ldots,Z_k$ we will only be able to establish bounds \emph{in expectation} on the $2 \rightarrow 1$ norm of $M$, rather than high-probability bounds.

\begin{definition}\label[definition]{def:2-to-1}
  Let $A \in \R^{n \times m}$ be a matrix with rows $A_1,\ldots,A_n$.
  The $2$-to-$1$ norm of $A$ is defined as
  \[
  \|A\|_{2 \rightarrow 1} = \max_{\|u\|=1} \|Au\|_1 = \max_{\|u\|=1,\sigma \in \{\pm 1\}^n} \sum_{i \leq n} \sigma_i \iprod{A_i,u}\mper
  \]
\end{definition}

Computing the $2 \rightarrow 1$-norm of a matrix $A$ exactly is computationally intractable \cite{bhattiprolu2018inapproximability}.
Nonetheless, we will profitably use a convex program -- again, an SDP -- whose optimal values can be related to the $2 \rightarrow 1$ norm.
Eventually we will relate the centrality SDP to the following slightly different SDP.
It is one of a well-studied family of SDPs for $p \rightarrow q$-norm problems, the most famous of which is the $\infty \rightarrow 1$-norm SDP appearing in Grothendieck's inequality and used to approximate the cut norm of a matrix \cite{DBLP:journals/siamcomp/AlonN06}.

\begin{definition}
  \label[definition]{def:2-to-1-sdp}
  For $n,m \in \N$ let $\cS_{n,m}^{2 \rightarrow 1}$ be the following subset of $\R^{(n+m) \times (n+m)}$, treated as the set of block matrices
  \[
  X(S,R,U) = \left ( \begin{array}{cc} S & R \\ R^\top & U \end{array} \right )\mper
  \]
  with $S \in \R^{n \times n}$ and $U \in \R^{m \times m}$.
  \[
  \cS_{n,m}^{2 \rightarrow 1} = \{ X(S,R,U) \, : \,   S_{ii} = 1 \text{ for } i = 1,\ldots,n, \, \,  \Tr U \leq 1, \text{ and } X \succeq 0 \}\mper
  \]
  Here we think of $S$ as a relaxation of rank-one matrices $\sigma \sigma^\top$, where $\sigma \in \{ \pm 1\}$ is as in \cref{def:2-to-1}, and $U$ as a relaxation of $uu^\top$ where $u$ is a unit vector as in \cref{def:2-to-1}.
\end{definition}

The following theorem is due to Nesterov.
It will allow us to control the optimum value of an SDP relaxation of the $2 \rightarrow 1$ norm in terms of the $2 \rightarrow 1$ norm itself.
It follows fairly easily from the observation that $\|A\|_{2 \rightarrow 1}^2 = \max_{\sigma \in \{\pm 1\}^n} \sigma^\top A^\top A \sigma$ and the fact (also due to Nesterov) that semidefinite programming yields a $\tfrac 2 \pi$-approximation algorithm  for the maximization of a positive semidefinite quadratic form over $\{ \pm 1\}^n$ (see e.g. \cite{DBLP:books/daglib/0030297}, section 6.3 for a simple proof).

\begin{theorem}[\cite{nesterov1998semidefinite}]\label[theorem]{thm:sos-2-to-1}
  There is a constant $K_{2 \rightarrow 1} = \sqrt{\pi/2} < 2$ such that for every $n \times m$ matrix $A$, one has the following inequality:
  \[
  \max_{X(S,R,U) \in \cS_{n,m}^{2 \rightarrow 1}} \iprod{R,A} \leq K_{2 \rightarrow 1} \|A\|_{2 \rightarrow 1}\mper
  \]
\end{theorem}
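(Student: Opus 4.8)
The plan is to reduce the SDP-to-norm comparison to the standard fact that semidefinite programming gives a constant-factor approximation to maximizing a PSD quadratic form over the hypercube, exactly as the text foreshadows. First I would observe that the maximization $\max_{X(S,R,U) \in \cS_{n,m}^{2 \rightarrow 1}} \iprod{R,A}$ can be reformulated by ``profiling out'' the variable $U$ and the off-diagonal structure of $R$. Concretely, given any feasible $X = X(S,R,U)$, the Schur-complement / positive-semidefiniteness condition $X \succeq 0$ together with $\Tr U \le 1$ and $S_{ii}=1$ constrains how large $\iprod{R,A}$ can be; I would argue that at the optimum one may assume $S = \sigma\sigma^\top$ for the purposes of an upper bound only after passing through the Nesterov rounding, so instead the cleaner route is: relax further. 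Replace the constraint $S_{ii}=1$ with $S_{ii}\le 1$ (this only enlarges the feasible set), and note that the resulting problem $\max \iprod{R,A}$ over PSD block matrices with $\Tr S \le n$, $\Tr U \le 1$ is a genuine SDP whose value dominates the left-hand side.

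Next I would relate that SDP to the quantity $\max_{\sigma \in \{\pm 1\}^n} \sigma^\top A A^\top \sigma$ — wait, more precisely $\|A\|_{2\to1}^2 = \max_{\sigma}\sigma^\top (A A^\top)\sigma$ using the row decomposition in \cref{def:2-to-1}, since fixing $\sigma$ and optimizing over the unit vector $u$ gives $\|A^\top \sigma\|_2 = (\sigma^\top A A^\top \sigma)^{1/2}$, so $\|A\|_{2\to1} = \max_\sigma \|A^\top\sigma\|_2$. The matrix $AA^\top$ is PSD, and the standard SDP relaxation of $\max_{\sigma\in\{\pm1\}^n}\sigma^\top(AA^\top)\sigma$ is $\max\{\iprod{AA^\top,S}: S \succeq 0,\ S_{ii}=1\}$, which by Nesterov's $\tfrac2\pi$ rounding result satisfies $\max_S \iprod{AA^\top,S} \le \tfrac\pi2 \|A\|_{2\to1}^2$. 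The work is then to show that the block-SDP value $\bigl(\max_X \iprod{R,A}\bigr)^2$ is at most $\max_S \iprod{AA^\top,S}$: given feasible $X(S,R,U)$ with $\iprod{R,A}=t$, the PSD condition gives $R R^\top \preceq $ (something like) $\|A\|$-scaled $S$ via $U$; more carefully, for any vector $w$, $\iprod{R,A} = \Tr(R^\top A^\top) $ and one uses $\begin{pmatrix} S & R \\ R^\top & U\end{pmatrix}\succeq 0$ to get $\iprod{R,A}^2 = \iprod{R,A}^2 \le \iprod{S, A A^\top}\cdot \Tr U \le \iprod{S,AA^\top}$ — this last inequality is the Cauchy–Schwarz-type bound for PSD block matrices (namely $\iprod{R,A}^2 = \bigl(\Tr(A^\top R)\bigr)^2 \le \iprod{S, A A^\top}\,\iprod{U, \Id}$, which follows from writing $R = S^{1/2} \Theta U^{1/2}$ with $\|\Theta\|_{\mathrm{op}}\le 1$ coming from $X \succeq 0$, then applying Cauchy–Schwarz). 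Chaining these: $\iprod{R,A}^2 \le \iprod{S,AA^\top} \le \tfrac\pi2\|A\|_{2\to1}^2$, so $\iprod{R,A}\le \sqrt{\pi/2}\,\|A\|_{2\to1}$, which is the claim with $K_{2\to1}=\sqrt{\pi/2}$.

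I would then double-check the last numeric point, $\sqrt{\pi/2}<2$, which is immediate since $\pi/2 < 2 < 4$.

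The main obstacle I anticipate is the factorization step $R = S^{1/2}\Theta U^{1/2}$ with $\|\Theta\|_{\mathrm{op}}\le 1$: one must handle the case where $S$ or $U$ is singular (restrict to the relevant ranges/pseudo-inverses), and verify cleanly that the Cauchy–Schwarz inequality $\bigl(\Tr(A^\top R)\bigr)^2 \le \iprod{S,AA^\top}\,\Tr U$ really does follow from $X \succeq 0$ alone — this is a standard but slightly fiddly fact about $2\times2$ block PSD matrices ($\begin{pmatrix} P & R \\ R^\top & Q\end{pmatrix}\succeq 0 \implies |\iprod{R,A}| \le \|P^{1/2}A\|_F \cdot \|Q^{1/2}\|_F$ after appropriate substitutions). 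Everything else — enlarging the feasible set to $S_{ii}\le1$, the identity $\|A\|_{2\to1}=\max_\sigma\|A^\top\sigma\|_2$, and invoking Nesterov's $\tfrac 2\pi$ bound as a black box — is routine.
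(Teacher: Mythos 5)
Your proposal is correct and follows exactly the route the paper sketches (the paper itself cites Nesterov and gives only the one-line outline that the result reduces to $\|A\|_{2\to1}^2 = \max_\sigma \sigma^\top(AA^\top)\sigma$ plus Nesterov's $\pi/2$-approximation for PSD quadratic forms over $\{\pm1\}^n$). You correctly identified the dimension-consistent form $AA^\top$ (the paper's in-text $A^\top A$ is a typo), and your chain $\iprod{R,A}^2 \le \iprod{S,AA^\top}\,\Tr U \le \iprod{S,AA^\top} \le \tfrac{\pi}{2}\|A\|_{2\to1}^2$ is the right reduction.

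One small remark on the step you flagged as fiddly: the block Cauchy--Schwarz inequality can be obtained more directly than via the $R = S^{1/2}\Theta U^{1/2}$ factorization (which, as you note, needs care with singular blocks and ultimately invokes a Schatten--H\"older bound). Simply take $P = \begin{pmatrix} aA \\ bI_m\end{pmatrix}$ for scalars $a,b$ and observe that $X \succeq 0$ gives
\[
0 \le \Tr(P^\top X P) = a^2\,\iprod{S,AA^\top} + 2ab\,\iprod{R,A} + b^2\,\Tr U,
\]
so the $2\times 2$ Gram matrix $\begin{pmatrix}\iprod{S,AA^\top} & \iprod{R,A}\\ \iprod{R,A} & \Tr U\end{pmatrix}$ is PSD, yielding $\iprod{R,A}^2 \le \iprod{S,AA^\top}\,\Tr U$ with no degeneracy issues. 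Also, the detour to the weaker constraint $S_{ii}\le 1$ is extraneous: you use the original $S$ (with $S_{ii}=1$) as a feasible point of the Goemans--Williamson-type SDP for $AA^\top$, which is all that is needed. These are cosmetic points; the argument as a whole is sound and matches the paper's intent.
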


The following lemma affords control over $\E \|M\|_{2 \rightarrow 1}$, where $M$ has rows $Z_1,\ldots,Z_k$.
The proof uses standard tools from empirical process theory; a similar argument appears in \cite{LM18}.
We provide the proof in \cref{sec:omitted-centrality}.

\begin{lemma}\label[lemma]{lem:2-to-1}
  Let $Z$ be an $\R^d$-valued random variable with mean $\E Z = 0$ and covariance $\E ZZ^\top = \Sigma$.
  Let $Z_1,\ldots,Z_k$ be iid copies of $Z$, and let $M \in \R^{k \times d}$ be the matrix whose rows are $Z_1,\ldots,Z_k$.
  Then
  \[
  \E \|M\|_{2 \rightarrow 1} \leq 2\sqrt{k \Tr \Sigma} + k \sqrt{\|\Sigma\|}
  \]
  where $\|\Sigma\|$ denotes the operator norm, or maximum eigenvalue, of $\Sigma$.
\end{lemma}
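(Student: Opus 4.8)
The plan is to bound $\E\|M\|_{2\rightarrow1}=\E\sup_{\|u\|=1}\sum_{i=1}^k\abs{\iprod{Z_i,u}}$ by the standard chain of empirical-process arguments: split the process into a deterministic mean part and a centered fluctuation, symmetrize the fluctuation, apply the contraction principle to strip the absolute values, and finish with a second-moment computation (this is close to the argument in \cite{LM18}).

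\emph{Step 1 (mean part).} For every unit $u$ and every $i$, Cauchy--Schwarz (equivalently Jensen) gives $\E\abs{\iprod{Z_i,u}}\leq(\E\iprod{Z_i,u}^2)^{1/2}=(u^\top\Sigma u)^{1/2}\leq\sqrt{\|\Sigma\|}$, using $\E Z=0$. Hence $\sup_{\|u\|=1}\sum_{i=1}^k\E\abs{\iprod{Z_i,u}}\leq k\sqrt{\|\Sigma\|}$, uniformly in $u$. Writing $\abs{\iprod{Z_i,u}}=\E\abs{\iprod{Z_i,u}}+(\abs{\iprod{Z_i,u}}-\E\abs{\iprod{Z_i,u}})$ and using that the supremum of a sum is at most the sum of the suprema, it remains to control $\E\sup_{\|u\|=1}\sum_i(\abs{\iprod{Z_i,u}}-\E\abs{\iprod{Z_i,u}})$.

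\emph{Step 2 (symmetrization and contraction).} By the symmetrization inequality for empirical processes, this centered supremum is at most $2\,\E\sup_{\|u\|=1}\sum_i\epsilon_i\abs{\iprod{Z_i,u}}$, where $\epsilon_1,\ldots,\epsilon_k$ are \iid Rademacher signs independent of $Z_1,\ldots,Z_k$. Conditioning on the $Z_i$ and applying the Ledoux--Talagrand contraction principle (one-sided form) with the $1$-Lipschitz map $t\mapsto\abs{t}$, which vanishes at $0$, removes the absolute values: $\E_\epsilon\sup_{\|u\|=1}\sum_i\epsilon_i\abs{\iprod{Z_i,u}}\leq\E_\epsilon\sup_{\|u\|=1}\sum_i\epsilon_i\iprod{Z_i,u}=\E_\epsilon\Norm{\sum_{i=1}^k\epsilon_iZ_i}$. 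Then by Jensen, $\E\Norm{\sum_i\epsilon_iZ_i}\leq(\E\Norm{\sum_i\epsilon_iZ_i}^2)^{1/2}$, and expanding the square, using independence of $\epsilon$ from $Z$ together with $\E\epsilon_i\epsilon_j$ equal to $1$ when $i=j$ and $0$ otherwise, $\E\Norm{\sum_i\epsilon_iZ_i}^2=\sum_{i=1}^k\E\Norm{Z_i}^2=k\Tr\Sigma$. Combining the steps yields $\E\|M\|_{2\rightarrow1}\leq k\sqrt{\|\Sigma\|}+2\sqrt{k\Tr\Sigma}$, as claimed.

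The argument is essentially routine; the only points deserving care are the measurability/separability justification needed to symmetrize a supremum over the uncountable unit sphere (handled as usual by passing to a countable dense subset, all the relevant functions of $u$ being continuous), and invoking the contraction principle in its one-sided form for Lipschitz functions fixing the origin (so that no extra constant factor is lost). No concentration tool is needed here, since only a bound in expectation is sought.
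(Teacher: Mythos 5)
Your proof is correct and follows essentially the same route as the paper's: decompose into mean part plus centered fluctuation, bound the mean part by $k\sqrt{\|\Sigma\|}$ via Cauchy--Schwarz, symmetrize, apply Ledoux--Talagrand contraction (in the one-sided, constant-free form) to strip the absolute values, and finish by Jensen/Cauchy--Schwarz to get $\sqrt{k\Tr\Sigma}$. The only cosmetic difference is that the paper writes out the symmetrization with an explicit ghost sample and triangle inequality, while you invoke the standard symmetrization inequality directly; these give the same factor of $2$.
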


Finally, the last lemma on the way to \cref{lem:sdp-central} shows that the centrality SDP satisfies a bounded differences property: this is crucial to establishing the high-probability bound in \cref{lem:sdp-central}.
The proof is \cref{sec:omitted-centrality}.

\begin{lemma}
\label[lemma]{lem:bdd-differences}
  Let $r \geq 0$ and $x \in \R^d$.
  Let $Z_1,\ldots,Z_k \in \R^d$, $i \in [k]$ and $Z_i' \in \R^d$.
  Let $SDP(Z_1,\ldots,Z_k,x,r)$ be the optimum value of the centrality SDP with parameters $Z_1,\ldots,Z_k,x,r$.
  Then
  \[
  |SDP(Z_1,\ldots,Z_k,x,r) - SDP(Z_1,\ldots,Z_{i-1},Z_i',Z_{i+1},\ldots,Z_k,x,r)| \leq \frac 1k\mper
  \]
\end{lemma}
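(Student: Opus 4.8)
The plan is to bound the change in the SDP optimum value by exhibiting, from a near-optimal solution of one instance, a feasible solution of the other instance whose objective value differs by at most $1/k$. By symmetry of the two instances it suffices to prove one direction: given a feasible solution $Y(B,W,U,b,u)$ for the centrality SDP with data $Z_1,\ldots,Z_k,x,r$, I will construct a feasible solution $Y'$ for the centrality SDP with data $Z_1,\ldots,Z_{i-1},Z_i',Z_{i+1},\ldots,Z_k,x,r$ whose objective value is at least $\frac1k\sum_{j=1}^k b_j - \frac1k$; then taking $Y$ to be (nearly) optimal gives $SDP(\ldots,Z_i,\ldots) \le SDP(\ldots,Z_i',\ldots) + \frac1k$, and swapping the roles of $Z_i$ and $Z_i'$ gives the reverse inequality, hence the absolute value bound.

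The construction is to zero out the $i$-th coordinate of $b$ and the $i$-th row and column of the blocks $B$ and $W$ — that is, to delete all SDP variables associated with the index $i$. Concretely, set $b'_j = b_j$ and $(W')_j = W_j$ for $j \ne i$, set $b'_i = 0$ and $(W')_i = 0$, keep $U$ and $u$ unchanged, and modify $B$ by replacing its $i$-th row and column with zeros (keeping $B'_{ii}$ at any value in $[0,1]$, e.g. $0$, or $1$ to be safe with $B_{ii}\le 1$ — either is fine). The key point is that this operation preserves positive semidefiniteness: the new Gram-type matrix $Y'$ is obtained from $Y$ by a projection that sends the ``coordinate $i$'' direction (in the $b$-block) to zero, which one can see is PSD either by writing $Y'$ as $P Y P$ for the diagonal projection $P$ that kills the relevant coordinate, or by directly checking that $Y'$ is a Gram matrix of the vectors obtained from those realizing $Y$ by projecting out one direction. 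The constraint $B'_{ii} \le 1$ holds; the constraints $B'_{jj} \le 1$ for $j \ne i$ are inherited; $\Tr U' = \Tr U \le 1$ is inherited; and the linear constraint $\iprod{Z_j - x, (W')_j} \ge r \cdot b'_j$ holds for $j \ne i$ because it is inherited unchanged (note it does not involve $Z_i$ or $Z_i'$ — only $Z_j$ for that index $j$), and for $j = i$ it reads $\iprod{Z_i' - x, 0} \ge r \cdot 0$, i.e. $0 \ge 0$, which is true. So $Y'$ is feasible for the new instance, and its objective value is $\frac1k\sum_{j \ne i} b_j = \frac1k\sum_{j=1}^k b_j - \frac{b_i}{k} \ge \frac1k\sum_{j=1}^k b_j - \frac1k$ since $b_i$ can be assumed in $[0,1]$ (it satisfies $b_i^2 \le B_{ii} \le 1$ via the PSD constraint on the $2\times 2$ principal minor indexed by the $1$ and the $i$-th $b$-coordinate, so $|b_i| \le 1$; even without this, replacing $b_i$ by its truncation to $[0,1]$ only helps and preserves feasibility of the one constraint it appears in when $r \ge 0$).

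The main thing to be careful about — the only place the argument could go wrong — is the PSD preservation step: one must make sure that zeroing out a row and column of a PSD matrix (rather than deleting it entirely, which obviously preserves PSDness as a principal submatrix) still yields a PSD matrix. This is true because zeroing the $i$-th row and column of a PSD matrix $Y$ equals $P Y P$ where $P = I - e_i e_i^\top$ is an orthogonal projection, and $P Y P \succeq 0$ whenever $Y \succeq 0$. The remaining steps are routine checks that each SDP constraint survives, which I sketched above; there is no quantitative estimation needed, just the observation that exactly one term (namely $b_i/k$) is removed from the objective and that term lies in $[0,1/k]$. Finally I should remark that the same argument shows, with the roles reversed, that $SDP(\ldots,Z_i',\ldots) \le SDP(\ldots,Z_i,\ldots) + \frac1k$, so combining the two gives the claimed bound $|SDP(Z_1,\ldots,Z_k,x,r) - SDP(Z_1,\ldots,Z_{i-1},Z_i',Z_{i+1},\ldots,Z_k,x,r)| \le \frac1k$.
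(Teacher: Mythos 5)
Your proposal is correct and follows essentially the same route as the paper's proof: zero out the row and column of $Y$ associated with the index $i$ (which preserves positive semidefiniteness and all the constraints, since the constraint for index $i$ becomes trivial), and use $b_i \leq \sqrt{B_{ii}} \leq 1$ to bound the loss in objective by $1/k$; symmetry gives the other direction. Your extra care about why $PYP \succeq 0$ and why the linear constraints are unaffected is a fine elaboration of the same argument.
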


Now we are ready to prove \cref{lem:sdp-central}.

\begin{proof}[Proof of \cref{lem:sdp-central}]
  The proof has an expectation step and a concentration step.
  Let $SDP(Z_1,\ldots,Z_k,\mu,r)$ be the optimum value of the centrality SDP.
  Since $Z_1,\ldots,Z_k$ are independent, by the bounded differences inequality together with \cref{lem:bdd-differences},
  \[
  \Pr \left ( SDP(Z_1,\ldots,Z_k,\mu,r) - \E SDP(Z_1,\ldots,Z_k,\mu,r) > 1/200 \right ) < 2^{-\Omega(k)}\mper
  \]
  Thus, it will suffice to show that $\E SDP(Z_1,\ldots,Z_k, \mu,r) \leq 1/200$ for some $r = O(\sqrt{\Tr \Sigma / k} + \sqrt{\|\Sigma\|})$.

  By definition of the centrality SDP, using the constraints $\iprod{Z_i-x, W_i} \geq r \cdot b_{i}$, we have
  \[
  \E \max_{B,W,U,b,u} \frac 1k \sum_{i \leq k} b_i \leq \frac 1 {kr} \E \max_{B,W,U,b,u} \sum_{i\leq k } \iprod{W_i, Z_i-\mu}\mper
  \]
    Let $\cS'$ be the set $\cS_{k,d}^{2 \rightarrow 1}$ with the modified constraint $S_{ii} \leq 1$ rather than $S_{ii} = 1$.
    Then we have $\cS' \supseteq \{Y(B,W,U)\}$ where the latter is the set of feasible solutions to the centrality SDP (restricted to the large $(d+k) \times (d+k)$ block), and hence
    \[
      \frac 1 {kr} \E \max_{B,W,U,b,u} \sum_{i \leq k} \iprod{W_i, Z_i-\mu} \leq \frac 1 {kr} \E \max_{X(S,R,U) \in \cS'} \sum_{i\leq k} \iprod{R_i, Z_i-\mu}
    \]
    where $R$ has rows $R_1,\ldots,R_k$, since the left-hand side maximizes over a larger set of PSD matrices.

    We would like to replace $\cS'$ with $\cS_{k,d}^{2 \rightarrow 1}$.
    For this we need to argue that the constraints $S_{ii} = 1$ are satisfied by the optimal $X(S,R,U)$.
    First of all, note that the maximum on the right-hand side is obtained at $X(S,R,U)$ where $\iprod{R_i,Z_i-\mu} \geq 0$, otherwise we may replace $X$ with $\tfrac 12 X + \tfrac 12 (-E_{ii}) X (-E_{ii})$ and remain inside $\cS'$ while only increasing $\iprod{R_i,Z_i-\mu}$ -- here $E_{ii}$ is the matrix with exactly one nonzero entry, at the $(i,i)$-th position, with value $1$.

    Hence also the maximum is obtained at $X(S,R,U)$ with $S_{ii} = 1$, otherwise we may rescale the $i$-th row and column by $1/\sqrt{S_{ii}}$ and remain in $\cS'$ while only increasing $\iprod{R_i,Z_i-\mu}$ (here we used that $\iprod{R_i,Z_i-\mu} \geq 0$, so $\iprod{R_i,Z_i-\mu}/\sqrt{S_{ii}} \geq \iprod{R_i,Z_i-\mu}$).
    Ultimately, we can conclude that
    \[
    \frac 1 {kr} \E \max_{X(S,R,U) \in \cS'} \sum_{i \leq k} \iprod{R_i,Z_i-\mu} = \frac 1 {kr} \E \max_{X(S,R,U) \in \cS_{k,d}^{2 \rightarrow 1}} \sum_{i \leq k} \iprod{R_i, Z_i-\mu}\mper
    \]
    The right-hand side is exactly the $2 \rightarrow 1$-norm SDP relaxation from \cref{def:2-to-1-sdp}.
    So if $M$ is the matrix with rows $Z_i - \mu$, we get
    \begin{align*}
    \E SDP(Z_1,\ldots,Z_k,\mu,r) & \leq \frac {K_{2 \rightarrow 1}} {kr} \cdot \E \|M\|_{2 \rightarrow 1}\\
    & \leq \frac {K_{2 \rightarrow 1}} {r} \cdot \Paren{2\sqrt{\Tr \Sigma/k} + \sqrt{\|\Sigma\|}}\mcom
    \end{align*}
    where we have used \cref{thm:sos-2-to-1} and $K_{2 \rightarrow 1}$ is the constant from that theorem.
    By choosing $r = 1000 (\sqrt{\Tr \Sigma / k} + \sqrt{\|\Sigma\|})$ the lemma follows.
\end{proof}

\section{SoS Preliminaries}
\label[section]{sec:preliminaries}

Now that we have established certifiable centrality of the mean, we can turn back to our main goal: design an algorithm to estimate the mean $\mu$ in order to prove \cref{thm:main}.
While in \cref{sec:cert-central} we employed a traditional style of semidefinite program (arising as a relaxation of a quadratic program), to prove \cref{thm:main} we will need a larger semidefinite program (i.e. having more variables and constraints).
The sum of squares method offers a principled way to exploit the addition of extra variables and constraints to semidefinite programs.

Treating SoS-style semidefinite programs with the traditional language and notation of semidefinite programming is often cumbersome.
Recent work in theoretical computer science has pioneered an alternative point of view, involving \emph{pseudoexpectations}, which correspond to SDP primal solutions, and \emph{SoS proofs}, which correspond to SDP dual solutions.
Analyzing a complex semidefinite program can often be reduced to the construction of an appropriate dual solution.
The pseudoexpectation/SoS proof point of view is designed to make this construction possible in a modular fashion, building a complicated dual solutions out of many simpler ones.

In this section we get set up to use the SoS approach for our main algorithm.
We review the preliminaries we need and refer the reader to other resources for a full exposition -- see e.g. \cite{sos-notes-general}.

\begin{definition}[SoS Polynomials]
  Let $x = x_1,\ldots x_n$ be some indeterminates, and let $p \in \R[x]$.
  We say that $p$ is SoS if it is expressible as $p = \sum_{i=1}^m q_i(x)^2$ for some other polynomials $q_i$.
  We write $p \succeq 0$, and if $p - q \succeq 0$ we write $p \succeq q$.
\end{definition}

\begin{definition}[SoS Proof]
  Let $\cA = \{p_1(x) \geq 0, \ldots, p_m(x) \geq 0 \}$ be a set of polynomial inequalities.
  We sometimes include polynomial equations $p_i(x) = 0$, by which we mean that $\cA$ contains both $p_i(x) \geq 0$ and $-p_i(x) \geq 0$.
  We say that $\cA$ SoS-proves that $q(x) \geq 0$ if there are SoS polynomials $q_S(x)$ for every $S \subseteq [m]$ such that
  \[
  q(x) = \sum_{S \subseteq [m]} q_S(x) \prod_{i \in S} p_i(x)\mper
  \]
  The polynomials $q_S(x)$ form an \emph{SoS proof} that $q(x) \geq 0$ for every $x$ such that $p_i(x) \geq 0$.
  If $\deg q_S(x) \cdot \prod_{i \in S} p_i(x) \leq d$ for every $S$, then we say that the proof has \emph{degree $d$}, and write
  \[
  \cA \proves{d} q(x) \geq 0\mper
  \]
  SoS proofs obey many natural inference rules, which we will freely use in this paper -- see e.g. \cite{sos-notes-general}.
\end{definition}

Critically, the set of SoS proofs of $q(x) \geq 0$ using axioms $\cA$ form a convex set (in fact, a semidefinite program).
Their convex duals are called \emph{pseudodistributions} or \emph{pseudoexpectations} (we use the terms interchangeably).

\begin{definition}[Pseudoexpectation]
  A degree-$d$ \emph{pseudoexpectation} in variables $x = x_1,\ldots,x_n$ is a linear operator $\pE \, : \, \R[x]_{\leq d} \rightarrow \R$, where $\R[x]_{\leq d}$ are the polynomials in $x$ with real coefficients and degree at most $d$.
  A pseudoexpectation is:
  \begin{enumerate}
  \item Normalized: $\pE 1 = 1$, where $1 \in \R[x]_{\leq d}$ on the left side is the constant polynomial.
  \item Nonnegative: $\pE p(x)^2 \geq 0$ for every $p$ of degree at most $d/2$.
  \end{enumerate}
  \end{definition}

\begin{definition}[Satisfying constraints]
A pseudoexpectation of degree $d$ \emph{satisfies} a polynomial equation $p(x) = 0$ if for every $q(x)$ such that $p(x)q(x)$ has degree at most $d$ it holds that $\pE p(x)q(x) = 0$.
The pseudodistribution satisfies an inequality $p(x) \geq 0$ if for every $q(x)^2$ such that $\deg q(x)^2 p(x) \leq d$ it holds that  $\pE p(x) q(x)^2 \geq 0$.
\end{definition}

\begin{example}
  To demystify pseudoexpectations slightly, consider the classic semidefinite relaxation of the set $\{\pm 1\}^n$ to the set $\{X \in \R^{n \times n} \, : \, X \succeq 0, X_{ii} = 1\}$.
  (This is exactly the set of PSD matrices employed in the SDP-based \textsc{max-cut} algorithm of Goemans and Williamson \cite{DBLP:journals/jacm/GoemansW95}.)

  Each such $X$ defines a degree $2$ pseudoexpectation, by setting $\pE x_i x_j = X_{ij}$ for $1 \leq i \leq n$, $\pE x_i = 0$, and finally $\pE 1 = 1$.
  Since $X \succeq 0$, it also follows that for every polynomial $p \in \R[x_1,\ldots,x_n]_{\leq 2}$, one has $\pE p(x)^2 = p_1^\top X p_1 + \hat{p}(\emptyset)^2 \geq 0$, where $p_1$ is the vector of coefficients of the homogeneous linear part of $p$ and $\hat{p}(\emptyset)$ is the constant term in $p$.
  Last, since $\pE x_i^2 = X_{ii} = 1$, the pseudoexpectation satisfies $x_i^2 - 1 = 0$ for each $i$; these equations exactly characterize $\{\pm 1\}^n$ as a variety in $\R^n$.\footnote{In this case, $\pE$ is defined by a few more parameters than $X$ -- namely the values $\pE x_i$, which we set to zero.
  For most algorithms involving degree-$2$ pseudoexpectations the main focus is on the $n^2$ variables $\pE x_i x_j$, so this is not too surprising.
  However, as we will see in the algorithm in \cref{sec:median-sdp}, pseudoexpectations of degree higher than $2$ can contain useful information about polynomials of various degrees.}
\end{example}

As in this simple example, it is always possible to write an explicit semidefinite program whose solutions are pseudoexpectations satisfying some chosen set of polynomial inequalities.
However, as the degrees and complexity of the of polynomials grow, these SDPs become notationally unwieldy.
In this regard, the pseudoexpectation approach carries significant advantages.

The most elementary fact relating pseudodistributions and SoS proofs is the following:

\begin{fact}
  Suppose $\cA \proves{d} p(x) \geq 0$.
  Then any degree-$d$ pseudodistribution $\pE$ which satisfies $\cA$ also has $\pE p(x) \geq 0$.
\end{fact}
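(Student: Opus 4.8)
The plan is to simply unwind the definition of the SoS proof $\cA \proves{d} p(x) \geq 0$ and push the linear operator $\pE$ through the resulting sum. By definition there are SoS polynomials $q_S$, one for each $S \subseteq [m]$, with $\deg\paren{q_S \prod_{i \in S} p_i} \leq d$ and
\[
p(x) = \sum_{S \subseteq [m]} q_S(x) \prod_{i \in S} p_i(x) \mper
\]
Applying $\pE$ and using linearity, $\pE p(x) = \sum_{S \subseteq [m]} \pE\Brac{q_S(x) \prod_{i \in S} p_i(x)}$, so it suffices to show each summand is nonnegative.

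Fix $S$ and write $q_S = \sum_j r_{S,j}(x)^2$, choosing the decomposition so that $\deg r_{S,j}^2 \leq \deg q_S$; then $\deg\paren{r_{S,j}^2 \prod_{i\in S} p_i} \leq d$. By linearity once more, $\pE\Brac{q_S \prod_{i\in S} p_i} = \sum_j \pE\Brac{r_{S,j}^2 \prod_{i\in S} p_i}$, and each term here is $\geq 0$ because $\pE$ satisfies $\cA$: unwinding the definition of satisfying the system $\cA$ (read in the standard way, namely that $\pE\Brac{\sigma \cdot \prod_{i\in S} p_i} \geq 0$ for every SoS $\sigma$ and every $S$ with the product staying within degree $d$) this is exactly the asserted inequality applied to $\sigma = r_{S,j}^2$. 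For $S = \eset$ the term is $\pE q_\eset = \sum_j \pE r_{\eset,j}^2 \geq 0$ directly by the nonnegativity axiom. Summing over $S$ gives $\pE p(x) \geq 0$.

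I expect the only genuine point to handle with care is the meaning of ``$\pE$ satisfies $\cA$'' when the SoS proof uses products $\prod_{i\in S} p_i$ with $\abs S \geq 2$: the per-inequality condition as literally stated does not by itself imply the product condition, so one must either adopt the usual convention (cf. \cite{sos-notes-general}) that satisfying the system $\cA$ includes satisfying all such products up to degree $d$, or observe that every SoS proof used later in this paper has $q_S \neq 0$ only for $\abs S \leq 1$ and treat that case directly. Beyond this bookkeeping — and the routine check that an SoS decomposition of each $q_S$ can be taken inside the degree budget — there is nothing more to do.
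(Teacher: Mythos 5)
The paper does not prove this Fact at all; it is asserted as ``the most elementary fact relating pseudodistributions and SoS proofs'' and the reader is pointed to \cite{sos-notes-general} for background. So there is no paper proof to compare against; the question is simply whether your blind argument is sound, and it is. You correctly unwind the definition of an SoS proof, push $\pE$ through the sum by linearity, decompose each $q_S$ into squares within the degree budget (the standard observation that an SoS polynomial of degree $2k$ admits a decomposition into squares of degree-$\le k$ polynomials), handle $S = \emptyset$ by the nonnegativity axiom of pseudoexpectations, and handle $|S| = 1$ by the paper's literal definition of ``$\pE$ satisfies $p_i(x) \ge 0$.''

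Your flagged subtlety is genuine and worth stating plainly: as written, the paper's \cref{sec:preliminaries} only defines ``$\pE$ satisfies $p_i(x) \ge 0$'' one constraint at a time, which yields $\pE[\sigma \cdot p_i] \ge 0$ for SoS $\sigma$ but does not, by itself, yield $\pE[\sigma \cdot p_i p_j] \ge 0$ for two distinct \emph{inequality} constraints. Two things keep this from being a real problem. First, products involving at least one \emph{equality} constraint $p_i(x) = 0$ are covered by the paper's definition, since $\pE[p_i \cdot q] = 0$ for \emph{every} $q$ of admissible degree, not just SoS $q$; so only products of two or more strict inequalities are at issue. Second, the standard convention in the SoS literature (and the one intended by \cite{sos-notes-general}) is precisely that ``satisfies $\cA$'' means $\pE\bigl[\sigma \prod_{i \in S} p_i\bigr] \ge 0$ for every SoS $\sigma$ and every subset $S$ with the product within the degree budget, which is exactly what your argument uses. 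Either of your proposed resolutions is fine, and the proof is complete once one is adopted.
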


We will make use of the following theorem, which can be proved via semidefinite programming.

\begin{theorem}[Adapted from \cite{sos-notes-general}]\label[theorem]{thm:sos-alg}
For every $d \in \N$ there exists an $(mn)^{O(d)}$-time algorithm which given a set of $m$ $n$-variate polynomial inequalities $\cA$ which:
\begin{itemize}
  \item has coefficients with bit complexity at most $(mn)^{O(d)}$
  \item contains a constraint of the form $\|x\|^2 \leq M$ for a positive constant $M$, and
  \item is satisfied by some $x \in \R^n$
\end{itemize}
finds a degree $d$ pseudodistribution which satisfies $\cA$ up to an additive error of $2^{-(mn)^d}$ in each inequality.
\end{theorem}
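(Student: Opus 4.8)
The plan is to realize the set of degree-$d$ pseudoexpectations satisfying $\cA$ as the feasible region of an explicit semidefinite program with $(mn)^{O(d)}$ variables and constraints, all of bit complexity $(mn)^{O(d)}$, and then solve it with the ellipsoid method. First I would note that a degree-$d$ pseudoexpectation $\pE$ is exactly the assignment of a real number $v_\alpha = \pE x^\alpha$ to each monomial $x^\alpha$ of degree at most $d$, so it is a vector $v \in \R^N$ with $N = \binom{n+d}{d} = n^{O(d)}$, and normalization is the affine constraint $v_0 = 1$. The positivity requirement $\pE p(x)^2 \geq 0$ for all $\deg p \leq d/2$ is equivalent to positive semidefiniteness of the moment matrix $M(v)$, indexed by monomials of degree at most $d/2$, with $(\alpha,\beta)$-entry $v_{\alpha+\beta}$ --- which is linear in $v$. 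Likewise, $\pE$ satisfying an inequality $p_i(x) \geq 0$ (an equality being handled as two inequalities) is equivalent to positive semidefiniteness of the localizing matrix $M_{p_i}(v)$, with $(\alpha,\beta)$-entry $\sum_\gamma (p_i)_\gamma\, v_{\gamma+\alpha+\beta}$ for $\alpha,\beta$ ranging over monomials of degree at most $(d - \deg p_i)/2$ and $(p_i)_\gamma$ the coefficients of $p_i$ --- again linear in $v$, with defining data of bit complexity $(mn)^{O(d)}$ by hypothesis. Hence the feasible set $K$ is a spectrahedron: an affine subspace of $\R^N$ intersected with the cone of block-diagonal PSD matrices having $O(m)$ blocks, each of dimension $n^{O(d)}$.

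Next I would check that the remaining two hypotheses of the theorem supply precisely what the ellipsoid method needs. \emph{Nonemptiness}: if $x^\star \in \R^n$ satisfies $\cA$, then $q \mapsto q(x^\star)$ is a degree-$d$ pseudoexpectation satisfying $\cA$ --- it is linear and normalized, $\pE p^2 = p(x^\star)^2 \geq 0$, and $\pE p_i q^2 = p_i(x^\star)\, q(x^\star)^2 \geq 0$ since $p_i(x^\star) \geq 0$ --- so $K \neq \emptyset$. \emph{Boundedness}: since $\cA$ contains $M - \|x\|^2 \geq 0$, a short induction on $|\alpha|$ using this constraint together with the Cauchy--Schwarz inequality for pseudoexpectations gives $|\pE x^\alpha| \leq M^{|\alpha|/2}$ for all $|\alpha| \leq d$, so $K$ lies in the Euclidean ball of radius $R := N^{1/2} M^{d/2}$, with $\log R = (mn)^{O(d)}$.

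The step I expect to be the main obstacle --- and the reason the conclusion asks only for \emph{approximate} satisfaction --- is that $K$ can have empty interior: the moment and localizing matrices may be forced to be rank-deficient, so there is no polynomial-time hope of producing an exactly feasible pseudoexpectation. I would resolve this in the standard way by settling for the additive error $\epsilon := 2^{-(mn)^d}$ in the statement, working with the relaxed feasible set $K_\epsilon$ in which every affine constraint holds within $\epsilon$ and every moment or localizing matrix is only required to be $\succeq -\epsilon I$. Since $M(v)$ and the $M_{p_i}(v)$ are linear in $v$ with coefficients of bit complexity $(mn)^{O(d)}$, the set $K_\epsilon$ contains a Euclidean ball of radius $2^{-(mn)^{O(d)}}$ around the point-mass pseudoexpectation $q \mapsto q(x^\star)$, so it is genuinely full-dimensional. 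One then runs the ellipsoid method on $B(0,R) \supseteq K_\epsilon \supseteq K$ with an approximate weak separation oracle: given a candidate $v$ of bit complexity $(mn)^{O(d)}$, the oracle evaluates each affine constraint and, for each PSD block, computes the smallest eigenvalue and a corresponding unit eigenvector $w$ to additive precision $\epsilon/2$ in polynomial time; if every computed smallest eigenvalue is at least $-\epsilon/2$ (so the true ones are at least $-\epsilon$) it outputs $v$, and otherwise it returns the violated linear inequality $\iprod{ww^\top,\, M(v)} \geq 0$ (or its localizing-matrix analogue), which holds for every genuine pseudoexpectation and hence separates $v$ from $K$. Because $K_\epsilon$ contains a ball of radius $2^{-(mn)^{O(d)}}$ and sits inside $B(0,R)$ with $\log R = (mn)^{O(d)}$, the method halts after $(mn)^{O(d)}$ iterations with such a $v$, each iteration costing $(mn)^{O(d)}$ arithmetic operations on $(mn)^{O(d)}$-bit numbers (dominated by the eigenvalue computations); rescaling $\epsilon$ by a constant makes the final error exactly $2^{-(mn)^d}$, giving the claimed $(mn)^{O(d)}$-time algorithm.
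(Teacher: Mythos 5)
Your proposal is correct and follows the standard moment-matrix-plus-ellipsoid argument that is the content of the cited reference; the paper itself does not reprove \cref{thm:sos-alg} but simply imports it, and your derivation (identifying pseudoexpectations with moment vectors, encoding positivity via moment and localizing matrices, using the $\|x\|^2 \leq M$ constraint to bound the feasible region, invoking the point-mass pseudoexpectation for nonemptiness, and running a weak separation oracle on the $\epsilon$-relaxed spectrahedron) is exactly the intended argument. The one step worth stating explicitly, which you gesture at but do not fully spell out, is the chain $\pE x^{2\alpha} \leq \pE \|x\|^{2|\alpha|} \leq M^{|\alpha|}$: the first inequality holds because $\|x\|^{2|\alpha|} - x^{2\alpha}$ expands as a sum of even monomials and is thus SoS, and the second by iterating $\pE \|x\|^2 q(x)^2 \leq M \pE q(x)^2$, which is exactly where the hypothesis that $\cA$ contains $\|x\|^2 \leq M$ is used.
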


In general the additive $2^{-(mn)^d}$ errors will not bother us, because the magnitudes of coefficients in the SoS proofs we construct will be bounded by $\poly(n,m)$.
See \cite{sos-notes-general,raghavendra2017bit} for more discussion of such numerical considerations.

%
%

We will use the following simple fact about pseudodistributions.

\begin{fact}\label[fact]{fact:2-norm}
  Let $\pE$ be a pseudodistribution of degree $2$ in variables $x_1,\ldots,x_n$ and let $\mu \in \R^n$.
  Then $\| \pE x - \mu \|^2 \leq \pE \|x-\mu\|^2$.
\end{fact}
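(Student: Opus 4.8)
The plan is to reduce the whole statement to a single scalar test polynomial. Write $v = \pE x - \mu \in \R^n$; this is a \emph{fixed} vector (the coordinates of $\pE x$ are just real numbers), and the claim is exactly $\|v\|^2 \leq \pE \|x-\mu\|^2$. Consider the affine polynomial $q(x) = \iprod{x-\mu, v}$, which has degree $1$ in $x$. First I would record, by linearity of $\pE$, that $\pE q(x) = \iprod{\pE x - \mu, v} = \iprod{v,v} = \|v\|^2$.

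Next I would sandwich $\pE q(x)^2$ between two bounds. For the lower bound: since $q$ has degree $1$, $(q(x) - \|v\|^2)^2$ has degree $2$ and is a square, so nonnegativity of the degree-$2$ pseudodistribution gives $\pE(q(x) - \|v\|^2)^2 \geq 0$; expanding and using $\pE q(x) = \|v\|^2$ and $\pE 1 = 1$ yields $\pE q(x)^2 \geq \|v\|^4$. For the upper bound I would invoke the Lagrange identity
\[
  \|v\|^2\|x-\mu\|^2 - \iprod{x-\mu, v}^2 = \sum_{i < j} \bigl(v_i(x_j - \mu_j) - v_j(x_i - \mu_i)\bigr)^2 \mper
\]
The right-hand side is a sum of squares of degree-$1$ polynomials in $x$, hence an SoS polynomial of degree $2$; that is, $\emptyset \proves{2} \|v\|^2\|x-\mu\|^2 - q(x)^2 \geq 0$. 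By the fact relating SoS proofs to pseudodistributions, this gives $\pE q(x)^2 \leq \|v\|^2 \, \pE\|x-\mu\|^2$ (the constant $\|v\|^2$ pulls out of $\pE$ by linearity).

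Combining the two inequalities gives $\|v\|^4 \leq \pE q(x)^2 \leq \|v\|^2\, \pE\|x-\mu\|^2$. If $v = 0$ the claim is immediate since $\pE\|x-\mu\|^2 \geq 0$ (it is the pseudoexpectation of a square of degree $1$); otherwise dividing through by $\|v\|^2 > 0$ gives $\|\pE x - \mu\|^2 = \|v\|^2 \leq \pE\|x-\mu\|^2$, as desired. The only thing to be careful about — and it is minor — is the degree bookkeeping: $q$ has degree $1$, $q^2$ degree $2$, and the Cauchy--Schwarz certificate is itself degree $2$, so every operation stays within the domain of a degree-$2$ pseudoexpectation. (One could equivalently run the argument directly on the PSD Gram matrix $\bigl(\pE[(x_i-\mu_i)(x_j-\mu_j)]\bigr)_{ij}$, but the polynomial-inequality route above is cleaner.)
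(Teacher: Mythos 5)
Your proof is correct, but it takes a different route from the paper's. The paper argues coordinate-wise: it applies the pseudo-Jensen inequality $\pE p(x)^2 \geq (\pE p(x))^2$ (a consequence of $\pE(p(x) - \pE p(x))^2 \geq 0$) with $p(x) = x_i - \mu_i$ for each $i$, then sums over $i$ to get $\pE\|x-\mu\|^2 \geq \sum_i (\pE x_i - \mu_i)^2 = \|\pE x - \mu\|^2$. You instead pick the single direction $v = \pE x - \mu$, apply pseudo-Jensen once to $q(x) = \iprod{x-\mu,v}$ for the lower bound $\pE q^2 \geq \|v\|^4$, and then certify the pseudo-Cauchy--Schwarz upper bound $\pE q^2 \leq \|v\|^2\,\pE\|x-\mu\|^2$ via the Lagrange identity; dividing out $\|v\|^2$ finishes it. Both arguments are sound and both ultimately rest on nonnegativity of $\pE$ on squares of degree-$1$ polynomials, but the paper's coordinate-wise version is shorter and avoids the Lagrange certificate and the $v = 0$ case split. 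What your version buys you is a reusable lemma: you have in effect established the degree-$2$ pseudo-Cauchy--Schwarz inequality $(\pE\iprod{x-\mu,v})^2 \leq \|v\|^2\,\pE\|x-\mu\|^2$ for an arbitrary fixed $v$, of which the stated fact is the special case $v = \pE x - \mu$; this more general form is often handy when working with pseudoexpectations, so the detour is not wasted.
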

 \begin{proof}
  Follows from $\pE (x_i - \mu_i)^2 \geq (\pE x_i - \mu_i)^2$ for every $i \leq n$, which follows from the more general fact $\pE p(x)^2 \geq (\pE p(x) )^2$ for every degree $1$ polynomial $p$.
  The latter follows by $\pE (p(x) - \pE p(x))^2 \geq 0$.
\end{proof}

%
%
%
%

\section{Main Algorithm and Analysis}
\label[section]{sec:main-alg}

Our main lemma for this section gives an algorithm which recovers a central point given vectors $Z_1,\ldots,Z_k$, provided that a \emph{certifiably} central point exists (and some minor additional regularity conditions on $Z_1,\ldots,Z_k$ are met).

\begin{lemma}\label[lemma]{lem:sos-median}
  For every $d,k \in \N$ and $C,r > 0$ there is an algorithm \textsc{median-sdp} which runs in time $(dk \log C)^{O(1)}$ and has the following guarantees.
  Let $Z_1,\ldots,Z_k \in \R^d$.
  Suppose that $\mu \in \R^d$ is certifiably $(r,1/100)$-central with respect to $Z_1,\ldots,Z_k$.
  And, suppose that at most $k/100$ of the vectors $Z_1,\ldots,Z_k$ have $\|Z_i - \mu\| > C r$.
  Then given $Z_1,\ldots,Z_k$, \textsc{median-sdp} returns a point $\hat{\mu}$ with $\|\mu - \hat{\mu}\| = O(r)$.
\end{lemma}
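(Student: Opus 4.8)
The plan is to realize \textsc{median-sdp} as a constant-degree sum-of-squares program that relaxes the set $\textsc{certifiable-centers}(Z_1,\dots,Z_k)$, solve it, and output the pseudoexpectation of the candidate-center variable. Take indeterminates $x = (x_1,\dots,x_d)$ for the candidate center together with indeterminates $M$ encoding a dual certificate for the centrality SDP with parameters $Z_1,\dots,Z_k,x,r$ — that is, the nonnegative Lagrange multipliers for its linear constraints together with a matrix $N$ for which $N^\top N$ plays the role of the PSD dual slack, so that ``$M$ is a valid certificate that the centrality SDP has value at most $1/100$'' becomes a system $\cA_{\mathrm{cert}}$ of polynomial identities and inequalities in $(x,M)$ — plus a bounding-ball constraint $\|x\|^2 \le M_0$ as required by \cref{thm:sos-alg}, with $M_0$ a quantity of bit-complexity $(\log C)^{O(1)}$ computable from the $Z_i$ and $C,r$ (here the regularity hypothesis that all but $k/100$ of the $Z_i$ lie in $B(\mu,Cr)$ is what guarantees a polynomially bounded such $M_0$ exists). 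Write $\cA$ for this whole constraint system. Crucially $\cA$ is nonempty: it is satisfied by $x = \mu$ together with a certificate $M_\mu$, which exists precisely because $\mu$ is assumed certifiably $(r,1/100)$-central. Run the algorithm of \cref{thm:sos-alg} on $\cA$ at degree $8$, producing in time $(dk\log C)^{O(1)}$ a pseudodistribution $\pE$ satisfying $\cA$ up to a $2^{-(dk)^{8}}$ error, and output $\hat\mu = \pE x$.

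Correctness reduces to showing $\pE\|x-\mu\|^2 = O(r^2)$: by \cref{fact:2-norm}, $\|\hat\mu-\mu\|^2 = \|\pE x - \mu\|^2 \le \pE\|x-\mu\|^2$. Setting $v_0 = (\hat\mu-\mu)/\|\hat\mu-\mu\|$ and using $(\pE p)^2 \le \pE p^2$ for the degree-one polynomial $p(x) = \iprod{x-\mu,v_0}$ (as in the proof of \cref{fact:2-norm}), it in fact suffices to produce, for an arbitrary fixed unit vector $w$, a degree-$O(1)$ SoS proof $\cA \proves{O(1)} \iprod{x-\mu,w}^2 \le O(r^2)$ whose coefficients have magnitude $\poly(d,k,M_0)$ — then apply it with $w = v_0$, the $2^{-(dk)^8}$ error of \cref{thm:sos-alg} being negligible against such coefficients. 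This SoS proof is a sum-of-squares rendering of the triangle-inequality argument for $r$-central points sketched in \cref{sec:overview}: for the fixed direction $w$, certifiable centrality of $\mu$ (instantiated at $w$ and at $-w$ via weak duality against the fixed certificate $M_\mu$) and certifiable centrality of $x$ (instantiated likewise, now as an SoS consequence of $\cA_{\mathrm{cert}}$ by pairing the certificate variables $M$ with rank-one test solutions $(1,b,w)(1,b,w)^\top$ built from $w$ and Boolean slot variables $b$) together confine $\iprod{Z_i-\mu,w}$ and $\iprod{Z_i-x,w}$ to $[-r,r]$ for all but a small constant fraction of indices $i$; since $\iprod{x-\mu,w} = \iprod{Z_i-\mu,w} - \iprod{Z_i-x,w}$ holds identically in $i$, a ``good'' index $i$ gives $\iprod{x-\mu,w}^2 \le 2\iprod{Z_i-\mu,w}^2 + 2\iprod{Z_i-x,w}^2 \le 4r^2$; multiplying the resulting certificate $w_i(4r^2 - \iprod{x-\mu,w}^2) \succeq 0$ by the indicator $w_i$, summing in $i$, and dividing through by $\tfrac1k\sum_i w_i$ (legitimate because $\iprod{x-\mu,w}^2$ is a square and $\tfrac1k\sum_i w_i - \tfrac12 \succeq 0$ is among the inequalities already derived) yields $\iprod{x-\mu,w}^2 = O(r^2)$.

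The main obstacle is carrying out the previous paragraph at \emph{constant} degree, and this is exactly what forces the unusual features of our SDPs. The central difficulty is that the direction in which one would like to apply centrality — the direction from $\mu$ to $x$ — is not a polynomial in $x$, so the ``for every unit $u$'' guarantee of the centrality SDP cannot literally be instantiated at it; sidestepping this is the reason we pass through a \emph{fixed} direction $w=v_0$ in the analysis, the reason the centrality SDP carries the extra row and column $(1,b,u)$ (which retains a degree-one copy of the direction usable inside rank-one test solutions), and the reason the PSD dual slack is written as $N^\top N$ (so that the weak-duality inequalities one needs are literally sums of squares, e.g. $\iprod{N^\top N,\,(1,b,w)(1,b,w)^\top} = \|N(1,b,w)\|^2 \succeq 0$). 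A second point of friction is that the centrality SDP only constrains $\iprod{Z_i-x,u}$ from one side, so turning ``the SDP has value $\le 1/100$'' into a usable lower bound on the number of well-behaved indices $i$ — and controlling the contribution of the $\le k/100$ atypical $Z_i$ to all the relevant sums — is where the regularity hypothesis $\|Z_i-\mu\|\le Cr$ is really used, and also where the constant hidden in the final $O(r)$ acquires its dependence on $C$. I expect essentially all of the work to lie in these two points; the surrounding skeleton — setting up $\cA$, noting feasibility via $(\mu,M_\mu)$, invoking \cref{thm:sos-alg}, and the reduction through \cref{fact:2-norm} — is routine.
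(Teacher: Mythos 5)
Your skeleton --- relax the set of certifiable centers by a constant-degree SoS program over the candidate center $x$ together with indeterminates encoding a dual witness for its centrality SDP, note feasibility at $(\mu,M_\mu)$, output $\hat\mu=\pE x$, and reduce everything to a low-degree SoS proof of $\iprod{x-\mu,w}^2\le O(r^2)$ via \cref{fact:2-norm} --- matches the paper's, and your fixed-direction reduction through $w=v_0$ is a legitimate variant of the paper's normalization by the scalar $\Delta=\sqrt{\pE\|x-\mu\|^2}$. But the crux of the argument is missing. In the step ``multiplying the resulting certificate $w_i(4r^2-\iprod{x-\mu,w}^2)\succeq 0$ by the indicator $w_i$, summing in $i$, and dividing through by $\tfrac1k\sum_i w_i$,'' the $w_i$ are never defined as polynomials in the variables of $\cA$, and they cannot be the indicators of ``good'' indices, because the event $|\iprod{Z_i-x,w}|\le r$ depends on the indeterminate $x$ and its $0/1$ indicator is not a polynomial --- this, not the non-polynomiality of the direction $(x-\mu)/\|x-\mu\|$, is the real obstruction. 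Moreover, ``dividing through by $\tfrac1k\sum_i w_i$'' is not a valid SoS inference: from $\cA\proves{O(1)}(\sum_i w_i)\cdot q\ge 0$ and $\cA\proves{O(1)}\sum_i w_i\ge k/2$ one cannot conclude $\cA\proves{O(1)} q\ge 0$. The paper's resolution of exactly this obstacle is the content of \cref{lem:nice-witness} and the auxiliary variables $a_{i,t}$: the regularity hypothesis is spent producing a \emph{well-conditioned} dual witness for $\mu$ with $\alpha_i'\in\{0\}\cup[1/100Cr,\,4k/r]$; the $a_{i,t}$ discretize the program's own multipliers $\alpha_i$ into $O(\log(C/c))$ dyadic buckets so that the rational function $1/(\alpha_i+\alpha_i')$ acquires a polynomial proxy $a_i$; the ``shared inlier'' is replaced by the pointwise bound $\alpha_i'a_i+\alpha_i a_i\ge\tfrac12$ for \emph{every} $i$ (\cref{lem:b-bounds}); and the final ``division'' is by the scalar $\Delta$ together with the numeric two-sided bounds on $\sum_i\alpha_i\alpha_i'a_i$ (\cref{lem:alpha-bounds}), never by an indeterminate. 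None of this machinery, or a substitute for it, appears in your proposal, so the announced SoS proof of $\iprod{x-\mu,w}^2\le O(r^2)$ cannot be carried out as described.

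A second, quantitative gap: you concede that the constant hidden in the final $O(r)$ may depend on $C$. That is too weak for the lemma as it is used: in the proof of \cref{thm:main} it is invoked with $C=O(k)=O(\log(1/\delta))$, so a $C$-dependent constant would yield error $O(r\log(1/\delta))$ and destroy the sub-Gaussian rate. In the paper's analysis the error constant is absolute, $C$ enters only the running time through $\log C$, and the regularity hypothesis is consumed entirely by \cref{lem:nice-witness} rather than by the error bound.
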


Together \cref{lem:sdp-central,lem:sos-median} suffice to prove \cref{thm:main}, with the small modification that the algorithm is given access to $r,C$ in addition to the samples $X_1,\ldots,X_n$.
We discuss in \cref{sec:avoid-assumptions} how to use standard ideas to avoid this dependence.

\begin{proof}[Proof of \cref{thm:main}]
  Let $k = c \log(1/\delta)$ for a big-enough constant $c$.
  Given samples $X_1,\ldots,X_n$, for $i \leq k$ let $Z_i$ be the average of samples $X_{i \cdot (n/k)},\ldots,X_{(i+1) \cdot n/k - 1}$ (throwing out samples as necessary so that $n$ is divisible by $k$).
  Then $Z_1,\ldots,Z_k$ are i.i.d. copies of a random variable $Z$ with $\E Z = \mu$ and $\E (Z- \mu)(Z-\mu)^\top = \tfrac kn \Sigma$.
  By \cref{lem:sdp-central}, $\mu$ is certifiably $(r,1/100)$-central with respect to $Z_1,\ldots,Z_k$ for $r = O(\sqrt{\Tr \Sigma / n} + \sqrt{\|\Sigma\| k / n})$ with probability at least $1-\exp(-\Omega(k))$.
  We can choose $c$ so that this probability is at least $1-\delta$ and $\sqrt{\|\Sigma\| k / n} = O(\sqrt{\|\Sigma\| \log(1/\delta) / n})$.

  Furthermore, by Chebyshev's inequality and a binomial tail bound, with probability at least $1 - \exp(-\Omega(k))$ we have that $\|Z_i - \mu\| \leq O(\sqrt{\Tr k \Sigma / n}) \leq O(kr)$ for all but $k/100$ vectors $Z_i$.
  Hence, except with probability $2^{-\Omega(k)}$, calling \textsc{median-sdp} with $C = O(k)$ yields a vector $x$ with $\|\mu - x\| \leq O(r)$.
\end{proof}

In the remainder of this section we prove \cref{lem:sos-median} from technical lemmas which are proved in the appendix.
We will make use of the SoS method, which will require some setup and technical arguments, so we describe the main idea first.
Given $Z_1,\ldots,Z_k$, we will define a system of polynomial equations $\cA$ whose feasible solutions are the certifiably $(r,1/10)$-central points.
(For technical convenience actually $\cA$ has feasible solutions which are the certifiably $(r,1/10)$-central points satisfying an additional mild regularity condition, as we discuss below.)
Our main algorithm will find a pseudodistribution which satisfies $\cA$ and extract from it an estimator $\hat{\mu} \in \R^d$.

To argue about $\|\hat{\mu} - \mu\|$, we will construct SoS proofs (using $\cA$ as axioms) of several inequalilties concerning certifiable $(r,1/10)$-central points.
Together these inequalities will capture the fact that any two $(r,1/10)$-central points $x,y$ have $\|x-y\| \leq 2r$; we will use the SoS proofs of these inequalities as duals to the set of pseudodistributions satisfying $\cA$, ultimately showing that $\|\hat{\mu} - \mu\| = O(r)$.

Before we can construct $\cA$, we need to observe a consequence of SDP duality -- certifiable centrality of $\mu$ implies the existence of a witness to its centrality.
(Here it may help to recall the set \textsc{certifiable-centers} from \cref{sec:intro}.)
Our construction of $\cA$ will exploit these witnesses.

\begin{lemma}
\label[lemma]{lem:witness}
  Let $Z_1,\ldots,Z_k,x \in \R^d$ and suppose $x$ is certifiably $(r,p)$-central with respect to $Z_1,\ldots,Z_k$.
  Then there are nonnegative numbers $\alpha_1,\ldots,\alpha_k, \beta_1,\ldots,\beta_k, \gamma$ and a degree-$2$ SoS polynomial $\sigma \in \R[b_1,\ldots,b_k,v_1,\ldots,v_d]_{\leq 2}$ such that the following polynomial identity holds in variables $b_1,\ldots,b_k,v_1,\ldots,v_d$.
  \begin{align}\label[equation]{eq:median-certificate}
  pk - \sum_{i=1}^k  b_i & = \sum_{i=1}^k \alpha_i b_i (\iprod{Z_i - x,v} - r)
    + \sum_{i=1}^k \beta_i (1-b_i^2) \\
    & + \gamma (1-\|v\|^2) \nonumber
    + \sigma (b,v)\mper
  \end{align}
\end{lemma}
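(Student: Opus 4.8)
The plan is to derive \cref{eq:median-certificate} directly from SDP duality applied to the centrality SDP. Recall that ``$x$ is certifiably $(r,p)$-central'' means the optimum value of the centrality SDP with parameters $Z_1,\ldots,Z_k,x,r$ is at most $p$. The centrality SDP is a maximization problem, so its value being $\le p$ is equivalent, by strong duality (which holds since the SDP is strictly feasible --- take $b=0$, $u=0$, $B=0$, $U=\tfrac{1}{d}\Id$, $W=0$, a strictly feasible point after a tiny perturbation, and the objective is bounded), to the existence of a feasible dual solution with value $\le p$. So first I would write down the Lagrangian dual of the centrality SDP explicitly: introduce multipliers $\beta_i'\ge 0$ for the constraints $B_{ii}\le 1$, $\gamma'\ge 0$ for $\Tr U\le 1$, and $\alpha_i'\ge 0$ for $\iprod{Z_i-x,W_i}\ge r\cdot b_i$. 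The dual says there is a PSD matrix $Q$ (the matrix of multipliers assembled into the same block structure as $Y$) such that $\iprod{Q, Y}$ equals $pk - \sum_i b_i$ plus a nonnegative combination of the (slacks of the) constraints, for all $Y\succeq 0$ --- equivalently, such that a certain matrix identity holds.

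The cleaner way to package this, and the route I would actually take, is to go through the quadratic-form / pseudoexpectation correspondence rather than raw matrix algebra. The centrality SDP value being $\le p$ means: every degree-$2$ pseudoexpectation $\pE$ in variables $b_1,\ldots,b_k,v_1,\ldots,v_d$ that satisfies the axioms $\cA' = \{\, b_i(\iprod{Z_i-x,v}-r)\ge 0,\ 1-b_i^2\ge 0,\ 1-\|v\|^2 \ge 0\,\}$ has $\pE[\tfrac1k\sum b_i]\le p$, i.e. $\pE[pk - \sum b_i]\ge 0$. (One should check the dictionary: a feasible block matrix $Y(B,W,U,b,u)$ corresponds to such a pseudoexpectation via $\pE b_i = b_i$, $\pE b_ib_j = B_{ij}$, $\pE v_av_b = U_{ab}$, $\pE b_iv_a = W_{ia}$, and the constraint $\iprod{Z_i-x,W_i}\ge r b_i$ becomes $\pE[b_i(\iprod{Z_i-x,v}-r)]\ge 0$; the constraint $B_{ii}\le 1$ becomes $\pE[1-b_i^2]\ge 0$; $\Tr U\le 1$ becomes $\pE[1-\|v\|^2]\ge 0$; and conversely any such $\pE$ yields a feasible $Y$.) Now invoke the dual characterization of degree-$2$ SoS: since the linear functional $\pE \mapsto \pE[pk-\sum b_i]$ is nonnegative on the (closed, and here bounded because of $1-b_i^2\ge 0$ and $1-\|v\|^2\ge 0$) spectrahedron of such pseudoexpectations, there is a degree-$2$ SoS certificate, i.e. nonnegative scalars $\alpha_i,\beta_i,\gamma$ and a degree-$2$ SoS polynomial $\sigma$ with exactly the identity \cref{eq:median-certificate}. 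This is precisely the statement that $\cA' \proves{2} pk - \sum b_i \ge 0$; one just needs that degree-$2$ SoS over a set of polynomials of degrees $\le 2$ (after absorbing products of two axioms --- note $b_i\cdot(1-b_j^2)$ etc. would be degree $>2$, but here each axiom has degree $\le 2$ and the ``Positivstellensatz at level $2$'' only needs the axioms themselves times SoS multipliers of degree $0$, i.e. nonnegative constants, plus a free SoS term) is exactly the feasibility of the SDP dual, with no duality gap because of strict feasibility and boundedness.

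The main obstacle --- really the only nontrivial point --- is the \emph{no duality gap / attainment} step: one must verify that the primal centrality SDP is strictly feasible (Slater's condition) so that the dual optimum is attained and equals the primal optimum, ensuring the certificate is an exact polynomial identity rather than an approximate one. This is where the bounded constraints $B_{ii}\le 1$, $\Tr U\le 1$ and the explicit strictly-feasible interior point come in; I would spend a sentence checking that $Y$ with $b=0,u=0, B=\tfrac12\Id, U=\tfrac{1}{2d}\Id, W=0$ is strictly feasible (all inequality constraints slack, $Y\succ 0$) and the objective is bounded above by $1$, hence strong duality holds. A secondary, purely bookkeeping, concern is that the degree-$2$ multipliers on the axioms $1-b_i^2\ge 0$ and $1-\|v\|^2\ge 0$ are forced to be constants (degree-$0$ SoS) for the total degree to stay $\le 2$ --- this is fine and is exactly what the statement asks for, but it is worth noting that this is why the $\beta_i$ and $\gamma$ appear as scalars rather than as SoS polynomials, while the single leftover SoS term $\sigma(b,v)$ of degree $2$ absorbs everything else. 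After these checks, \cref{eq:median-certificate} is immediate.
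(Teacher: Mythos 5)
Your proposal is correct and takes essentially the same route as the paper, which dispatches the lemma in one line by citing SDP duality (the identity is the quadratic form of an optimal dual solution to the centrality SDP evaluated at $(1,b,v)$); you have simply filled in the standard details. One small imprecision: at your proposed point $b=0,W=0$ the constraints $\iprod{Z_i-x,W_i}\geq r b_i$ hold with equality rather than strictly, so "all inequality constraints slack" is not literally true --- but since these constraints are affine, the refined form of Slater's condition (only the PSD cone constraint needs to be strictly satisfied) still applies, so your conclusion stands.
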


The proof is a direct application of SDP duality -- see e.g. \cite{boyd2004convex}.
(The polynomial identity is obtained by evaluating the quadratic form of an optimal dual solution to the centrality SDP at the vector of indeterminates $(1,b,u)$.)
The numbers $\alpha,\beta,\gamma$ and SoS polynomial $\sigma$ are an SoS proof that $x$ is $(r,p)$-central: they witness
\[
\bigcup_{i \leq k} \{b_i^2 \leq 1, \|v\|^2 \leq 1, b_i \iprod{Z_i-x,v} - b_i r \geq 0 \} \proves{2} \sum_{i=1}^k b_i \leq pk\mper
\]
Indeed one may check that if $v$ is any unit vector and $b$ is the $0/1$ indicator for those $i \in [k]$ such that $\iprod{Z_i-x,v} \geq r$, then the right-hand side of \cref{eq:median-certificate} is nonnegative when evaluated at $b,v$.
Hence the left-hand side must be as well, which means that $\sum_{i \in k} b_i \leq pk$.

The last step before constructing the polynomial system $\cA$ is to observe a consequence of the regularity condition from \cref{lem:sos-median} that $\|Z_i -\mu\| \leq C r$ for at least $99k/100$ $Z_i$'s.
Namely, it affords some control over the magnitudes of the numbers $\alpha_1,\ldots,\alpha_k,\gamma$ from \cref{lem:witness}, ensuring that the witness $\alpha_1,\ldots,\alpha_k,\beta_1,\ldots,\beta_k,\gamma$ has a certain well-conditioned-ness property.
We will capture the well-conditioned-ness property in $\cA$ and make use of it in our SoS proofs.
The proof of the following lemma involves elementary manipulations on equations like \cref{eq:median-certificate}; we defer it to the appendix.

\begin{lemma}
\label[lemma]{lem:nice-witness}
  Let $Z_1,\ldots,Z_k,x \in \R^d$ and suppose $x$ is $(r,p)$-central with respect to $Z_1,\ldots,Z_k$.
  Suppose also that $\|Z_i - x\| \leq Cr$ for all but $qk$ vectors $Z_i$, where $C \geq 1$.
  Then there are nonnegative numbers $\alpha_1,\ldots,\alpha_k, \beta_1,\ldots,\beta_k, \gamma$ and a degree-$2$ SoS polynomial $\sigma \in \R[b_1,\ldots,b_k,v_1,\ldots,v_d]_{\leq 2}$ such that the following polynomial identity holds in variables $b_1,\ldots,b_k,v_1,\ldots,v_d$.
  \begin{align}
  (p+q+1/20) k - \sum_{i=1}^k  b_i & = \sum_{i=1}^k \alpha_i b_i (\iprod{Z_i - x,v} - r)
    + \sum_{i=1}^k \beta_i (1-b_i^2) \\
    & + \gamma (1-\|v\|^2) \nonumber
    + \sigma (b,v)\mper
  \end{align}
  Furthermore, $\gamma$ is in the finite set $\{0,1/100,2/100,\ldots,k \}$, and $\alpha_1,\ldots,\alpha_k$ are in the set $\{0\} \cup [1/100Cr,4k/r]$.
\end{lemma}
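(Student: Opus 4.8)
The plan is to start from the degree-$2$ SoS certificate produced by \cref{lem:witness} and modify it so that (i) the coefficients $\alpha_i$ attached to the ``bad'' indices (those $i$ with $\|Z_i - x\| > Cr$) are zeroed out, at the cost of weakening the left-hand side by an additive $qk$, and (ii) the remaining coefficients can be rounded into the stated finite sets, at the cost of a further additive $k/20$. First I would invoke \cref{lem:witness} with the same parameters to obtain nonnegative $\alpha_i,\beta_i,\gamma$ and an SoS polynomial $\sigma$ with
\[
pk - \sum_{i=1}^k b_i = \sum_{i=1}^k \alpha_i b_i(\iprod{Z_i - x,v} - r) + \sum_{i=1}^k \beta_i(1 - b_i^2) + \gamma(1 - \|v\|^2) + \sigma(b,v).
\]
Let $B \subseteq [k]$ be the set of bad indices, so $|B| \le qk$. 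For $i \in B$ I want to delete the term $\alpha_i b_i(\iprod{Z_i - x,v} - r)$. The key gadget is the SoS identity $b_i \le \tfrac12(1 + b_i^2)$, equivalently $1 - b_i = \tfrac12(1-b_i)^2 + \tfrac12(1 - b_i^2)$, so that $b_i \preceq 1$ modulo the axiom $1 - b_i^2 \ge 0$. More precisely, for each $i \in B$ I add $\alpha_i b_i(\iprod{Z_i-x,v} - r)$ to both sides and then bound $\alpha_i b_i(\iprod{Z_i - x,v} - r) \preceq \alpha_i b_i \iprod{Z_i-x,v}$; but the truly clean route is different — rather than bounding the bad terms I will simply drop them by re-deriving the certificate from scratch on the index set $[k]\setminus B$.

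Here is the cleaner version of the argument. The point of $(r,p)$-centrality is that for every unit $v$, at most $pk$ of the indices $i\in[k]$ have $\iprod{Z_i - x,v}\ge r$; a fortiori at most $pk + |B| \le (p+q)k$ indices in the restricted set $[k]\setminus B$ (together with the bad set, counted trivially) satisfy this. Thus $x$ is $(r, p+q)$-central \emph{when we additionally know} that we only ``charge'' good vectors. Concretely, applying \cref{lem:witness} gives, for the polynomial optimization problem over the good indices, nonnegative $\alpha_i$ ($i \notin B$), $\beta_i$, $\gamma$ and SoS $\sigma$ with
\[
(p+q)k - \sum_{i \notin B} b_i = \sum_{i \notin B}\alpha_i b_i(\iprod{Z_i - x,v} - r) + \sum_{i\notin B}\beta_i(1 - b_i^2) + \gamma(1 - \|v\|^2) + \sigma(b,v),
\]
and then I add back $\sum_{i \in B}(1 - b_i) = \sum_{i\in B}\bigl[\tfrac12(1-b_i)^2 + \tfrac12(1-b_i^2)\bigr]$ to both sides, which contributes $|B| \le qk$ to the constant on the left, sets $\alpha_i = 0$ for $i \in B$, augments $\beta_i$ by $1/2$ for $i \in B$, and augments $\sigma$ by the SoS polynomial $\tfrac12\sum_{i\in B}(1-b_i)^2$. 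This yields an identity of the stated form with left-hand constant $(p + 2q)k - \sum_i b_i$; absorbing $2q$ into the slack $q + 1/20$ (or just re-running the bad-set bound once instead of twice) gives $(p + q)k$, and there is room to spare for the $1/20$.

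The final step — and the part I expect to be the main obstacle — is the quantization: showing $\gamma$ can be taken in $\{0, 1/100, \dots, k\}$ and each $\alpha_i$ in $\{0\}\cup[1/100Cr,\, 4k/r]$. For the \emph{upper} bounds, I would evaluate the polynomial identity at carefully chosen test points $(b,v)$: taking $b = 0$ and $v$ the unit vector $\pm(Z_i - x)/\|Z_i-x\|$ isolates $\alpha_i$ against the covariance-type quantities and the regularity bound $\|Z_i-x\|\le Cr$, forcing $\alpha_i = O(k/r)$; taking $v = 0$ and comparing constant terms bounds $\gamma$ by the total mass on the right, which is $O(k)$. For the \emph{lower} bound $\alpha_i \ge 1/100Cr$ (when $\alpha_i \ne 0$), the honest statement is that if some $\alpha_i$ is nonzero but tiny, we may zero it out and fold its contribution into $\sigma$ and the $\beta$'s and the $1/20$ slack, exactly as we did for the bad indices — a positive $\alpha_i b_i(\iprod{Z_i-x,v} - r)$ with $\|Z_i - x\| \le Cr$ and $\|v\|\le 1$ satisfies $\alpha_i b_i(\iprod{Z_i-x,v}-r) \succeq -\alpha_i Cr \cdot b_i \succeq -\alpha_i Cr(1 + b_i^2)/2$ modulo the axioms, so each such deletion costs at most $\alpha_i Cr$ in the constant; if $\alpha_i < 1/100Cr$ this is less than $1/100$, and there are at most $k$ of them, costing at most $k/100 < k/20$ total. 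Finally, rounding the surviving $\gamma$ and $\alpha_i$ up to the nearest grid point only \emph{increases} the right-hand side (since the axioms $1 - \|v\|^2 \ge 0$ and $b_i(\iprod{Z_i-x,v}-r)\ge 0$ are being multiplied by larger nonnegative numbers), so the identity can be restored by shrinking $\sigma$ or, if that fails, by absorbing the $O(1/k)$-per-coordinate rounding error into the slack — this bookkeeping, making sure all the small error terms genuinely fit inside the single $1/20$ budget, is the delicate part and the place where constants must be chosen consistently.
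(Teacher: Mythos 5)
Your overall plan mirrors the paper's: split the argument into (i) zeroing out $\alpha_i$ for the indices with $\|Z_i-x\|>Cr$, (ii) zeroing out indices where $\alpha_i$ is tiny, (iii) bounding the surviving $\alpha_i$ from above, and (iv) rounding $\gamma$ to a grid. However, the execution has several genuine gaps.

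The most serious is the step that handles small $\alpha_i$. You write ``$\alpha_i b_i(\iprod{Z_i-x,v}-r)\succeq -\alpha_i Cr\cdot b_i$ modulo the axioms,'' but this is not a valid SoS statement: the sign of the indeterminate $b_i$ is not controlled, so multiplying the pointwise bound $\iprod{Z_i-x,v}-r\ge -(C+1)r$ by $b_i$ does not preserve direction. The lemma asks for a polynomial \emph{identity} in $b,v$, so every manipulation must hold for all $(b,v)\in\R^{k+d}$, not just on the feasible region of the quadratic program. The paper gets around this by a scaled SoS Cauchy--Schwarz step, $\alpha_i b_i\iprod{Z_i-x,v}\preceq 100\alpha_i^2 b_i^2\|Z_i-x\|^2 + \|v\|^2/100$, which is a genuine sum-of-squares inequality; when $\alpha_i<1/(100Cr)$ and $\|Z_i-x\|\le Cr$, the first term is at most $b_i^2/100$ and can be absorbed into the $\beta_i(1-b_i^2)$ budget, and the second into $\gamma(1-\|v\|^2)$. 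Your estimate of the total cost ($\le k/100$) is in the right ballpark, but the inequality you use to get there fails.

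Two further problems. First, your test point for the upper bound on $\alpha_i$ is wrong: with $b=0$, every term $\alpha_i b_i(\iprod{Z_i-x,v}-r)$ vanishes, so you learn nothing about $\alpha_i$. The paper evaluates at $b_i=-1$ for all $i$ and $v=0$, which isolates $r\sum_i\alpha_i$ on the right-hand side. Second, the claim that ``rounding $\alpha_i$ up to the nearest grid point only increases the right-hand side'' is a statement about the feasible region, not about the polynomial identity; you would have to exhibit an SoS polynomial to restore the identity after rounding, and $b_i(\iprod{Z_i-x,v}-r)$ is sign-indefinite, so this does not go through the way it does for $\gamma$ (where $(\gamma'-\gamma)\|v\|^2$ is SoS). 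Fortunately the lemma does not ask for $\alpha_i$ on a grid --- only in the \emph{range} $\{0\}\cup[1/(100Cr),\,4k/r]$ --- so this rounding is not needed at all. Finally, your ``cleaner'' route of re-deriving the certificate on the good index set implicitly requires that the \emph{restricted} centrality SDP has small value, which is not what \cref{lem:witness} gives you; the paper instead substitutes $b_i=0$ for the bad indices directly into the existing identity and then restores the missing $b_i$'s via $\tfrac12(1-b_i)^2+\tfrac12(1-b_i^2)$, which is closer to your first (non-``clean'') idea and is the one you should pursue.
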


Now we are able to construct our main polynomial system $\cA$, whose solutions correspond to $x,\alpha,\beta,\gamma,\sigma$ such that $\alpha,\beta,\gamma,\sigma$ form a witness that $x$ is a certifiably $(r,1/10)$-central.
For technical convenience, we take $\gamma$ to be a \emph{parameter} of this system rather than one of its indeterminates.
Part of our algorithm will involve a brute-force search for a good choice of $\gamma$ -- by \cref{lem:nice-witness} there will only be $O(k)$ possibilities to search over.

\begin{definition}[The polynomial system $\cA(Z_1,\ldots,Z_k,r,C,c,\gamma)$]
  For vectors $Z_1,\ldots,Z_k \in \R^d$, $r > 0$, and $c,C > 0$ we define a system of equations in the following variables:
  \begin{align*}
    & \alpha_1,\ldots,\alpha_k, \beta_1,\ldots,\beta_k, \sigma_{ij} \text{ for } i,j \in [d+k+1], \\
    & x_1,\ldots,x_d, \text{ and } a_{i,t} \text{ for } i \in [k] \text{ and } t \in [\log C/c + 1]\mper
  \end{align*}
  Let $\cA_{\text{sos}}$ be the set of linear equations among $\alpha_1,\ldots,\alpha_k,\beta_1,\ldots,\beta_k, \sigma_{ij}, x$ which ensure that the polynomial identity
\begin{align*}
    \frac k {10} - \sum_{i=1}^k b_i
    & = \sum_{i \in S} \alpha_i b_i (\iprod{Z_i-\mu,v} - r)
    + \sum_{i=1}^k \beta_i (1 - b_i^2) \\
    & + \gamma(1-\|v\|^2)
    + \sum_{i \in [d+k+1]} \iprod{\sigma_i, (1,b,v)}^2
    \end{align*}
    holds in variables $b_1,\ldots,b_k,v_1,\ldots,v_d$, where $\sigma_i$ is the vector with $j$-th entry $\sigma_{ij}$ and $(1,b,v)$ is the $(d+k+1)$-dimensional concatenation $1,b_1,\ldots,b_k,v_1,\ldots,v_d$.
    We often abuse notation and write $\sigma(b,v)$ for the expression $\sum_{i \in [d+k+1]} \iprod{\sigma_i, (1,b,v)}^2$.
    Let $\cA_{\text{nonneg}}$ be the inequalities
    \begin{align*}
      \alpha_i \geq 0 \text{ for } i \in [k] \text{ and } \beta_i \geq 0 \text{ for } i \in [k]
    \end{align*}
    Let $\cA_{\text{a}}$ be the equations and inequalities
    \begin{align*}
      & a_{i,t}^2 = a_{i,t} \text{ for } t \in [\log C/c+1] \\
      & a_{i,t} \cdot 2^{t-1} \cdot c \leq a_{i,t} \cdot \alpha_i \text{ for } t \in [1,\log C/c+1] \\
      & a_{i,t} \cdot \alpha_i \leq a_{i,t} \cdot 2^{t} \cdot c \text{ for } t \in [1,\log C/c+1] \\
      & a_{i,0} \cdot \alpha_i = 0 \\
      & \sum_{t \leq \log C/c+1} a_{i,t} = 1 \text{ for all $i \leq k$} \\
      & a_{i,t} a_{i,t'} = 0 \text{ for all $i \leq k$ and $t \neq t'$.}
    \end{align*}
    The inequalities $\cA_a$ ensure that $a_{i,t} \in \{0,1\}$ and $a_{i,t} = 1$ if and only if $\alpha \in [2^{t-1} c, 2^{t}c]$ (or $\alpha_i = 0$ in the case of $a_{i,0}$).
    We will use the variables $a_{i,t}$ to approximate some functions of $\alpha_i$ which are not polynomials.
    For instance, if $\alpha,a$ satisfy $\cA_a$ and $\alpha_i > 0$ then $\sum_{1 \leq t \leq \log C/c+1} a_{i,t} / (c \cdot 2^{t}) \in [1/2\alpha_i, 1/\alpha_i]$.

    Finally, let $\cA = \cA_{\text{sos}} \cup \cA_{\text{nonneg}} \cup \cA_{\text{a}}$.
\end{definition}

Now we can describe the algorithm \textsc{median-sdp} and its main analysis.

\begin{algorithm}{\textsc{median-sdp}}
\label[algorithm]{alg:med-sdp}
  \textbf{Given:} $Z_1,\ldots,Z_k \in \R^d, r,C > 0$
  \begin{enumerate}
  \item For each $\gamma \in \{0,1/100,2/100,\ldots,k\}$, try to find a degree $8$ pseudodistribution satisfying $\cA(Z_1,\ldots,Z_k,r,1/100Cr, 4k/r,\gamma)$.
  If none exists for any $\gamma$, output \textsc{reject}.
  Otherwise, let $\pE$ be the pseudodistribution obtained for any $\gamma$ for which one exists.
  \item Output $\pE x$.
  \end{enumerate}
\end{algorithm}

\begin{lemma}[Main lemma for \textsc{median-sdp}]\label[lemma]{lem:sos-median-proof}
  Let $Z_1,\ldots,Z_k \in \R^d$.
  Let $\mu$ be certifiably $(r,1/10)$-central.
  Then for every $c,C,\gamma$, any degree-$8$ pseudodistribution $\pE$ satisfying $\cA$ has $\pE \|x - \mu\|^2 = O(r^2)$.
\end{lemma}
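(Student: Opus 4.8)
The plan is to carry the elementary ``two $(r,1/10)$-central points lie within $2r$'' argument recalled in \cref{sec:overview} through the SoS proof system, with $\mu$ the fixed certified center and $x$ the pseudodistributed one. Since $\mu$ is certifiably $(r,1/10)$-central, \cref{lem:witness} (with $p=1/10$) produces an \emph{explicit} dual certificate $\alpha^\mu_1,\dots,\alpha^\mu_k,\beta^\mu_1,\dots,\beta^\mu_k,\gamma^\mu,\sigma^\mu$, i.e.\ the polynomial identity \cref{eq:median-certificate} with $x$ replaced by $\mu$, valid identically in the dummy indeterminates $b_1,\dots,b_k,v_1,\dots,v_d$; invoking \cref{lem:nice-witness} in addition we may take this certificate well-conditioned. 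On the other side, because $\pE$ satisfies $\cA$, the subsystem $\cA_{\text{sos}}$ hands us the analogous identity for $x$ with the \emph{variable} weights $\alpha_i,\beta_i$ and the polynomial $\sigma$ (and the parameter $\gamma$), $\cA_{\text{nonneg}}$ gives $\alpha_i,\beta_i\ge 0$, and $\cA_{\text{a}}$ gives the bucketing of each $\alpha_i$ together with a degree-one polynomial $\widetilde{\alpha_i^{-1}}:=\sum_t a_{i,t}/(c\cdot 2^t)$ that approximates $1/\alpha_i$ up to a factor of two (with $a_{i,0}=1$ detecting $\alpha_i=0$).

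\textbf{The core SoS derivation.}
The heart of the argument is a degree-$8$ SoS derivation, from the axioms $\cA$, of an inequality of the shape $\norm{x-\mu}^2 \le O(r^2) + (\text{negligible})$. I would instantiate the ``direction'' indeterminate $v$ at $v=x-\mu$ in the $\mu$-identity and at $v=\mu-x$ in the $x$-identity --- this is the classical difference direction, and the point is the algebraic identity $\iprod{Z_i-x,\ \mu-x}=\norm{x-\mu}^2-\iprod{Z_i-\mu,\ x-\mu}$, so that $\iprod{Z_i-\mu,\ x-\mu}$ occurs in the two identities with opposite signs. I would then instantiate the $b$-indeterminates so as to couple the certificates: weight coordinate $i$ in the $x$-identity by $\alpha^\mu_i\cdot\widetilde{\alpha_i^{-1}}\cdot(1-a_{i,0})$ times the coordinate's weight in the $\mu$-identity, so that the effective weight $\alpha_i b_i$ matches $\alpha^\mu_i b'_i$ up to the harmless factor-two slack, with $a_{i,0}$ zeroing out the $\alpha_i=0$ coordinates. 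Adding the two instantiated identities makes the $\iprod{Z_i-\mu,\ x-\mu}$ contributions (approximately) cancel --- this cancellation is the SoS incarnation of ``some $Z_i$ is simultaneously good for $x$ and for $\mu$'' --- leaving a $\norm{x-\mu}^2$-weighted term on one side and $O(r)$-scale terms, the manifestly nonnegative multiples $\beta_i(1-b_i^2)$, $\sigma$, $\sigma^\mu$, and the defect $(\gamma+\gamma^\mu)(1-\norm{x-\mu}^2)$ on the other. The residual ``bad'' contributions --- the at most $k/10$ coordinates on which the coupling has the wrong sign, and the defect from an unnormalized direction --- are absorbed using $\alpha_i\ge 0$, the boundedness $\alpha_i\le 2C$ read off from the buckets of $\cA_{\text{a}}$, the constraint-coefficient identities of $\cA_{\text{sos}}$ forcing $\gamma,\gamma^\mu=O(k)$, and Cauchy--Schwarz; if the direct substitution does not already close the bound, one first squares through to a quartic inequality $\norm{x-\mu}^4\le O(r^2)\norm{x-\mu}^2+(\text{negligible})$ and finishes as below.

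\textbf{Applying the pseudoexpectation.}
Finally I would apply $\pE$: since $\pE$ is a degree-$8$ pseudodistribution satisfying $\cA$, SoS soundness turns the derived inequality into $\pE\norm{x-\mu}^2\le O(r^2)+(\text{negligible})$ directly, or --- in the quartic formulation --- into $\pE\norm{x-\mu}^4\le O(r^2)\,\pE\norm{x-\mu}^2+(\text{negligible})$, which combined with $\pE\norm{x-\mu}^4\ge(\pE\norm{x-\mu}^2)^2$ (from $\pE(p-\pE p)^2\ge 0$ with $p=\norm{x-\mu}^2$, exactly as in the proof of \cref{fact:2-norm}) and solving the resulting quadratic in $\pE\norm{x-\mu}^2$ again gives $\pE\norm{x-\mu}^2=O(r^2)$. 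The $2^{-(kd)^{O(1)}}$ additive errors allowed by \cref{thm:sos-alg} are swallowed by the ``negligible'' term since all multipliers introduced in the derivation have $\poly(k,d,1/r)$ magnitude.

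\textbf{Main obstacle.}
I expect the main obstacle to be the coupling step: faithfully simulating the combinatorial pigeonhole of Lugosi--Mendelson (the existence of a \emph{single} index $i$ that is good in both projections onto the difference direction) by a genuine degree-$\le 8$ SoS identity. This is precisely why $\cA$ was engineered with the $a_{i,t}$ variables and their defining constraints --- the two dual certificates weight the $k$ constraints by the mismatched coefficients $\alpha_i$ versus $\alpha^\mu_i$, and reconciling them requires a polynomial surrogate for $1/\alpha_i$ --- and why the well-conditionedness of the $\mu$-certificate (\cref{lem:nice-witness}) and the boundedness encoded by $\cA_{\text{a}}$ matter: they keep every introduced multiplier of controlled size, so that the residual terms stay genuinely negligible and the total degree stays within the budget of $8$.
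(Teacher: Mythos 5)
Your high-level plan is the paper's: evaluate the two dual certificates at opposite difference directions with $b$-weights built from the bucket variables $a_{i,t}$, add, and let the cross terms cancel. But two of the concrete devices you propose do not work, and they are exactly the two places where the paper's proof has to be careful. First, the coupling. You match $\alpha_i b_i$ to $\alpha_i^\mu b_i'$ only ``up to the harmless factor-two slack,'' but that slack is not harmless: the residual $(\alpha_i b_i - \alpha_i^\mu b_i')\iprod{Z_i-\mu,\,x-\mu}$ has a coefficient of size up to $\tfrac12 \alpha_i^\mu$ multiplying a sign-indefinite, unbounded quantity, and nothing in $\cA$ lets you absorb it. The paper instead defines $a_i = a_{i,0}/\alpha_i' + \sum_t a_{i,t}/(2^t c + \alpha_i')$ (a surrogate for $1/(\alpha_i+\alpha_i')$, not for $1/\alpha_i$) and sets $b_i = \alpha_i' a_i$, $b_i' = \alpha_i a_i$, so that $\alpha_i b_i$ and $\alpha_i' b_i'$ are \emph{identical polynomials} and the cross terms cancel exactly, leaving $\alpha_i\alpha_i' a_i\|\mu-x\|^2$; the factor-two approximation is used only for the scalar facts $b_i+b_i'\ge \tfrac12$, $b_i,b_i'\le 1$ (\cref{lem:b-bounds}) and $0.1k/r \le \sum_i \alpha_i\alpha_i'a_i \le 0.6k/r$ (\cref{lem:alpha-bounds}). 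Your asymmetric choice also does not guarantee $b_i\le 1$ (you get $b_i\approx \alpha_i^\mu/\alpha_i$), so even the nonnegativity of the $\beta_i(1-b_i^2)$ terms is lost.

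Second, the normalization of the direction. With $v = \mu-x$ unnormalized, the defect term $(\gamma+\gamma^\mu)(1-\|\mu-x\|^2)$ contributes $(\gamma+\gamma^\mu)\|\mu-x\|^2$ to the wrong side of the final inequality, and $\gamma+\gamma^\mu$ can be as large as $\Theta(k)$ while the favorable coefficient of $\|\mu-x\|^2$ is only $\sum_i\alpha_i\alpha_i'a_i \ge k/(10r)$; for $r\gtrsim 1$ the bad term dominates, and neither Cauchy--Schwarz nor your quartic fallback changes this, since the problem is a coefficient comparison, not a degree or soundness issue. The paper's resolution is to set $v = (\mu-x)/\Delta$ with the \emph{scalar} $\Delta = \sqrt{\pE\|\mu-x\|^2}$ determined by the pseudodistribution itself, so that $\pE\big[(\gamma+\gamma^\mu)(1-\|v\|^2)\big]=0$ exactly and the defect vanishes after applying $\pE$. (A minor further point: the well-conditioning of the $\mu$-certificate via \cref{lem:nice-witness} is needed for feasibility of $\cA$ in \cref{lem:sos-median}, not for \cref{lem:sos-median-proof} itself.) With these two repairs your argument becomes the paper's proof.
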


We will prove \cref{lem:sos-median-proof} in \cref{sec:median-sdp}.
We wrap up this section by proving \cref{lem:sos-median} from \cref{lem:sos-median-proof,lem:witness,lem:nice-witness}.

\begin{proof}[Proof of \cref{lem:sos-median}]
  Since at most $k/100$ of of $Z_1,\ldots,Z_k$ have $\|Z_i - \mu\| > Cr$, and because $\mu$ is $(r,1/100)$-certifiable, together \cref{lem:witness,lem:nice-witness} show that there exist nonnegative $\alpha_1,\ldots,\alpha_k, \beta_1,\ldots,\beta_k, \gamma$ and a degree-$2$ SoS polynomial $\sigma$ such that
  \begin{align*}
  0.07k - \sum_{i \leq k} b_i & =  \sum_{i=1}^k \alpha_i b_i (\iprod{Z_i - x,v} - r)
    + \sum_{i=1}^k \beta_i (1-b_i^2) \\
    & + \gamma (1-\|v\|^2) \nonumber
    + \sigma (b,v)\mper
  \end{align*}
  holds as a polynomial identity in $b,v$.
  Furthermore, $\alpha_i \in \{ 0\} \cup [1/100Cr, 4k/r]$ and $\gamma \in \{1/100,2/100,\ldots,k\}$.
  So, $\cA(Z_1,\ldots,Z_k,r,1/100Cr, 4k/r,\gamma)$ is feasible.
  Thus, \textsc{median-sdp} with parameters $r,C$ eventually finds a pseudodistribution $\pE$ satisfying $\cA$ for some $\gamma'$.
  So by \cref{lem:sos-median-proof} we have $\pE \|x-\mu\|^2 \leq O(r^2)$.
  Then the main conclusion of \cref{lem:sos-median} follows by \cref{fact:2-norm}.

  The running time bound follows by observation that $\cA$ has $(dk \log C)^{O(1)}$ variables and inequalities with this choice of parameters, then application of \cref{thm:sos-alg}.
\end{proof}

\section{Conclusion}
We have described the first polynomial-time algorithm capable of estimating the mean of a distribution with confidence intervals asymptotically matching those of the empirical mean in the Gaussian setting, under only the assumption that the distribution has finite mean and covariance.
Previous estimators with matching rates under such weak assumptions required exponential computation time.
Our algorithm uses semidefinite programming, and in particular the SoS method.
The SDP we employ is sufficiently powerful that Lugosi and Mendelson's analysis of their tournament-based estimator can be transformed to an analysis of the SoS SDP.

Our algorithm runs in polynomial time, but it is not close to practical for any substantially high-dimensional data set.
Work building on the present paper has already reduced the running time to $O(n^{3.5} + n^2 d) \cdot (\log nd)^{O(1)}$ \cite{cherapanamjeri2019fast}.
It remains an interesting direction for future study whether there is a \emph{practical} algorithm whose empirical performance improves on that of fast, practical algorithms (like geometric median) which achieve a $\sqrt{\Tr \Sigma \log(1/\delta) / n}$-style confidence interval.

\section*{Acknowledgements}
Thanks to Siva Balakrishnan and Stas Minsker for bringing the mean estimation problem to my attention.
I am most grateful to Peter Bartlett, Tarun Kathuria, Pravesh Kothari, Jerry Li, Gabor Lugosi, Prasad Raghavendra, and Jacob Steinhardt for helpful conversations as this manuscript was being prepared, and to anonymous reviewers for many suggestions in improving its presentation and correcting errors.
An earlier version of this manuscript contained a serious technical error: I am greatly indebted to Yeshwanth Cherapanamjeri for pointing it out to me (and explaining it to me several times), and to Prasad Raghavendra for several suggestions in correcting it.
Finally, thanks to the editors and anonymous reviewers of the Annals of Statistics whose suggestions substantially improved this manuscript.


\bibliographystyle{amsalpha}
\bibliography{bib/mathreview,bib/dblp,bib/custom,bib/scholar}

\newcommand{\etalchar}[1]{$^{#1}$}
\providecommand{\bysame}{\leavevmode\hbox to3em{\hrulefill}\thinspace}
\providecommand{\MR}{\relax\ifhmode\unskip\space\fi MR }
\providecommand{\MRhref}[2]{%
  \href{http://www.ams.org/mathscinet-getitem?mr=#1}{#2}
}
\providecommand{\href}[2]{#2}
\begin{thebibliography}{DKK{\etalchar{+}}16}

\bibitem[ABH16]{abbe2016exact}
Emmanuel Abbe, Afonso~S Bandeira, and Georgina Hall, \emph{Exact recovery in
  the stochastic block model}, IEEE Transactions on Information Theory
  \textbf{62} (2016), no.~1, 471--487.

\bibitem[AMS99]{alon1999space}
Noga Alon, Yossi Matias, and Mario Szegedy, \emph{The space complexity of
  approximating the frequency moments}, Journal of Computer and System Sciences
  \textbf{58} (1999), no.~1, 137--147.

\bibitem[AN06]{DBLP:journals/siamcomp/AlonN06}
Noga Alon and Assaf Naor, \emph{Approximating the cut-norm via {G}rothendieck's
  inequality}, {SIAM} J. Comput. \textbf{35} (2006), no.~4, 787--803.

\bibitem[AW08]{DBLP:conf/isit/AminiW08}
Arash~A. Amini and Martin~J. Wainwright, \emph{High-dimensional analysis of
  semidefinite relaxations for sparse principal components}, {ISIT}, {IEEE},
  2008, pp.~2454--2458.

\bibitem[Ber06]{bernholt2006robust}
Thorsten Bernholt, \emph{Robust estimators are hard to compute}, Tech. report,
  Technical Report/Universitat Dortmund, 2006.

\bibitem[BGG{\etalchar{+}}18]{bhattiprolu2018inapproximability}
Vijay Bhattiprolu, Mrinalkanti Ghosh, Venkatesan Guruswami, Euiwoong Lee, and
  Madhur Tulsiani, \emph{Inapproximability of matrix p $\rightarrow$ q norms},
  Electronic Colloquium on Computational Complexity (ECCC), vol.~25, 2018,
  p.~37.

\bibitem[BKM17]{BKM17}
Jess Banks, Robert Kleinberg, and Cristopher Moore, \emph{The {L}ovasz theta
  function for random regular graphs and community detection in the hard
  regime}, 21st International Conference on Randomization and Computation
  (RANDOM) (2017).

\bibitem[BM16]{DBLP:conf/colt/BarakM16}
Boaz Barak and Ankur Moitra, \emph{Noisy tensor completion via the
  sum-of-squares hierarchy}, {COLT}, {JMLR} Workshop and Conference
  Proceedings, vol.~49, JMLR.org, 2016, pp.~417--445.

\bibitem[BR13]{DBLP:conf/colt/BerthetR13}
Quentin Berthet and Philippe Rigollet, \emph{Complexity theoretic lower bounds
  for sparse principal component detection}, {COLT}, {JMLR} Workshop and
  Conference Proceedings, vol.~30, JMLR.org, 2013, pp.~1046--1066.

\bibitem[BS17]{sos-notes-general}
Boaz Barak and David Steurer, \emph{The sos algorithm over general domains},
  Lecture notes: Proofs, Beliefs and Algorithms through the Lens of Sum of
  Squares (2017),
  \url{https://www.sumofsquares.org/public/lec-definitions-general.html}.

\bibitem[BV04]{boyd2004convex}
Stephen Boyd and Lieven Vandenberghe, \emph{Convex optimization}, Cambridge
  university press, 2004.

\bibitem[Cat12]{catoni2012challenging}
Olivier Catoni, \emph{Challenging the empirical mean and empirical variance: a
  deviation study}, Annales de l'Institut Henri Poincar{\'e}, Probabilit{\'e}s
  et Statistiques, vol.~48, Institut Henri Poincar{\'e}, 2012, pp.~1148--1185.

\bibitem[CFB19]{cherapanamjeri2019fast}
Yeshwanth Cherapanamjeri, Nicolas Flammarion, and Peter~L. Bartlett, \emph{Fast
  mean estimation with sub-{G}aussian rates}, arXiv preprint, arXiv:1902.01998
  (2019).

\bibitem[CLM{\etalchar{+}}16]{DBLP:conf/stoc/CohenLMPS16}
Michael~B. Cohen, Yin~Tat Lee, Gary~L. Miller, Jakub Pachocki, and Aaron
  Sidford, \emph{Geometric median in nearly linear time}, {STOC}, {ACM}, 2016,
  pp.~9--21.

\bibitem[CR09]{DBLP:journals/focm/CandesR09}
Emmanuel~J. Cand{\`{e}}s and Benjamin Recht, \emph{Exact matrix completion via
  convex optimization}, Foundations of Computational Mathematics \textbf{9}
  (2009), no.~6, 717--772.

\bibitem[CT10]{DBLP:journals/tit/CandesT10}
Emmanuel~J. Cand{\`{e}}s and Terence Tao, \emph{The power of convex relaxation:
  near-optimal matrix completion}, {IEEE} Trans. Information Theory \textbf{56}
  (2010), no.~5, 2053--2080.

\bibitem[dGJL07]{DBLP:journals/siamrev/dAspremontGJL07}
Alexandre d'Aspremont, Laurent~El Ghaoui, Michael~I. Jordan, and Gert R.~G.
  Lanckriet, \emph{A direct formulation for sparse {PCA} using semidefinite
  programming}, {SIAM} Review \textbf{49} (2007), no.~3, 434--448.

\bibitem[DKK{\etalchar{+}}16]{DBLP:conf/focs/DiakonikolasKK016}
Ilias Diakonikolas, Gautam Kamath, Daniel~M. Kane, Jerry Li, Ankur Moitra, and
  Alistair Stewart, \emph{Robust estimators in high dimensions without the
  computational intractability}, {FOCS}, {IEEE} Computer Society, 2016,
  pp.~655--664.

\bibitem[DKMZ11]{decelle2011inference}
Aurelien Decelle, Florent Krzakala, Cristopher Moore, and Lenka Zdeborov{\'a},
  \emph{Inference and phase transitions in the detection of modules in sparse
  networks}, Physical Review Letters \textbf{107} (2011), no.~6, 065701.

\bibitem[DLLO16]{devroye2016sub}
Luc Devroye, Matthieu Lerasle, Gabor Lugosi, and Roberto~I Oliveira,
  \emph{Sub-gaussian mean estimators}, The Annals of Statistics \textbf{44}
  (2016), no.~6, 2695--2725.

\bibitem[FFF99]{faloutsos1999power}
Michalis Faloutsos, Petros Faloutsos, and Christos Faloutsos, \emph{On
  power-law relationships of the internet topology}, ACM SIGCOMM Computer
  Communication Review, vol.~29, ACM, 1999, pp.~251--262.

\bibitem[GV16]{guedon2016community}
Olivier Gu{\'e}don and Roman Vershynin, \emph{Community detection in sparse
  networks via {G}rothendieck's inequality}, Probability Theory and Related
  Fields \textbf{165} (2016), no.~3-4, 1025--1049.

\bibitem[GW95]{DBLP:journals/jacm/GoemansW95}
Michel~X. Goemans and David~P. Williamson, \emph{Improved approximation
  algorithms for maximum cut and satisfiability problems using semidefinite
  programming}, J. {ACM} \textbf{42} (1995), no.~6, 1115--1145.

\bibitem[HKP{\etalchar{+}}17]{HKKMP17}
Samuel~B Hopkins, Pravesh~K Kothari, Aaron Potechin, Prasad Raghavendra, Tselil
  Schramm, and David Steurer, \emph{The power of sum-of-squares for detecting
  hidden structures}, Foundations of Computer Science (FOCS), 2017 IEEE 58th
  Annual Symposium on, IEEE, 2017, pp.~720--731.

\bibitem[HL18]{hopkins2018mixture}
Samuel~B Hopkins and Jerry Li, \emph{Mixture models, robustness, and sum of
  squares proofs}, Proceedings of the 50th Annual ACM SIGACT Symposium on
  Theory of Computing, ACM, 2018, pp.~1021--1034.

\bibitem[HS16]{hsu2016loss}
Daniel Hsu and Sivan Sabato, \emph{Loss minimization and parameter estimation
  with heavy tails}, The Journal of Machine Learning Research \textbf{17}
  (2016), no.~1, 543--582.

\bibitem[HS17]{HS17}
Samuel~B Hopkins and David Steurer, \emph{Efficient bayesian estimation from
  few samples: community detection and related problems}, Foundations of
  Computer Science (FOCS), 2017 IEEE 58th Annual Symposium on, IEEE, 2017,
  pp.~379--390.

\bibitem[Hub64]{huber1964robust}
Peter~J Huber, \emph{Robust estimation of a location parameter}, The Annals of
  Mathematical Statistics \textbf{35} (1964), no.~1, 73--101.

\bibitem[JVV86]{jerrum1986random}
Mark~R Jerrum, Leslie~G Valiant, and Vijay~V Vazirani, \emph{Random generation
  of combinatorial structures from a uniform distribution}, Theoretical
  Computer Science \textbf{43} (1986), 169--188.

\bibitem[KKM18]{klivans2018efficient}
Adam Klivans, Pravesh~K Kothari, and Raghu Meka, \emph{Efficient algorithms for
  outlier-robust regression}, arXiv preprint arXiv:1803.03241 (2018).

\bibitem[KNV15]{krauthgamer2015semidefinite}
Robert Krauthgamer, Boaz Nadler, and Dan Vilenchik, \emph{Do semidefinite
  relaxations solve sparse pca up to the information limit?}, The Annals of
  Statistics \textbf{43} (2015), no.~3, 1300--1322.

\bibitem[KSS18]{kothari2018robust}
Pravesh~K Kothari, Jacob Steinhardt, and David Steurer, \emph{Robust moment
  estimation and improved clustering via sum of squares}, Proceedings of the
  50th Annual ACM SIGACT Symposium on Theory of Computing, ACM, 2018,
  pp.~1035--1046.

\bibitem[Las01]{lasserre2001global}
Jean~B Lasserre, \emph{Global optimization with polynomials and the problem of
  moments}, SIAM Journal on Optimization \textbf{11} (2001), no.~3, 796--817.

\bibitem[LKF05]{leskovec2005graphs}
Jure Leskovec, Jon Kleinberg, and Christos Faloutsos, \emph{Graphs over time:
  densification laws, shrinking diameters and possible explanations},
  Proceedings of the Eleventh ACM SIGKDD International Conference on Knowledge
  Discovery in Data Mining, ACM, 2005, pp.~177--187.

\bibitem[LM18]{LM18}
G{\'a}bor Lugosi and Shahar Mendelson, \emph{Sub-gaussian estimators of the
  mean of a random vector}, Annals of Statistics (2018).

\bibitem[LO11]{lerasle2011robust}
Matthieu Lerasle and Roberto~I Oliveira, \emph{Robust empirical mean
  estimators}, arXiv preprint arXiv:1112.3914 (2011).

\bibitem[LRV16]{DBLP:conf/focs/LaiRV16}
Kevin~A. Lai, Anup~B. Rao, and Santosh Vempala, \emph{Agnostic estimation of
  mean and covariance}, {FOCS}, {IEEE} Computer Society, 2016, pp.~665--674.

\bibitem[Min15]{minsker2015geometric}
Stanislav Minsker, \emph{Geometric median and robust estimation in banach
  spaces}, Bernoulli \textbf{21} (2015), no.~4, 2308--2335.

\bibitem[Min18]{minsker2018uniform}
\bysame, \emph{Uniform bounds for robust mean estimators}, arXiv preprint
  arXiv:1812.03523 (2018).

\bibitem[MS16]{montanari2016semidefinite}
Andrea Montanari and Subhabrata Sen, \emph{Semidefinite programs on sparse
  random graphs and their application to community detection}, Proceedings of
  the forty-eighth annual ACM Symposium on Theory of Computing (STOC), ACM,
  2016, pp.~814--827.

\bibitem[MSS16]{DBLP:conf/focs/MaSS16}
Tengyu Ma, Jonathan Shi, and David Steurer, \emph{Polynomial-time tensor
  decompositions with sum-of-squares}, {FOCS}, {IEEE} Computer Society, 2016,
  pp.~438--446.

\bibitem[MW15]{DBLP:conf/nips/MaW15}
Tengyu Ma and Avi Wigderson, \emph{Sum-of-squares lower bounds for sparse
  {PCA}}, {NIPS}, 2015, pp.~1612--1620.

\bibitem[Nes98]{nesterov1998semidefinite}
Yurii Nesterov, \emph{Semidefinite relaxation and nonconvex quadratic
  optimization}, Optimization Methods and Software \textbf{9} (1998), no.~1-3,
  141--160.

\bibitem[Nes00]{nesterov2000squared}
\bysame, \emph{Squared functional systems and optimization problems}, High
  Performance Optimization, Springer, 2000, pp.~405--440.

\bibitem[NY83]{nemirovsky1983problem}
Arkadii~Semenovich Nemirovsky and David~Borisovich Yudin, \emph{Problem
  complexity and method efficiency in optimization.}, SIAM Review \textbf{27}
  (1983), no.~2, 264--265.

\bibitem[Par00]{parrilo2000structured}
Pablo~A Parrilo, \emph{Structured semidefinite programs and semialgebraic
  geometry methods in robustness and optimization}, Ph.D. thesis, California
  Institute of Technology, 2000.

\bibitem[PS17]{potechin2017exact}
Aaron Potechin and David Steurer, \emph{Exact tensor completion with
  sum-of-squares}, Proceedings of Machine Learning Research vol \textbf{65}
  (2017), 1--54.

\bibitem[RD00]{rahm2000data}
Erhard Rahm and Hong~Hai Do, \emph{Data cleaning: Problems and current
  approaches}, IEEE Data Eng. Bull. \textbf{23} (2000), no.~4, 3--13.

\bibitem[RSS18]{RSS18}
Prasad Raghavendra, Tselil Schramm, and David Steurer, \emph{High-dimensional
  estimation via sum-of-squares proofs}, arXiv preprint arXiv:1807.11419
  (2018).

\bibitem[RW17]{raghavendra2017bit}
Prasad Raghavendra and Benjamin Weitz, \emph{On the bit complexity of
  sum-of-squares proofs}, arXiv preprint arXiv:1702.05139 (2017).

\bibitem[Sho87]{shor1987approach}
Naum~Zuselevich Shor, \emph{An approach to obtaining global extremums in
  polynomial mathematical programming problems}, Cybernetics \textbf{23}
  (1987), no.~5, 695--700.

\bibitem[Tuk60]{tukey1960survey}
John~W Tukey, \emph{A survey of sampling from contaminated distributions},
  Contributions to probability and statistics (1960), 448--485.

\bibitem[VBW98]{vandenberghe1998determinant}
Lieven Vandenberghe, Stephen Boyd, and Shao-Po Wu, \emph{Determinant
  maximization with linear matrix inequality constraints}, SIAM journal on
  matrix analysis and applications \textbf{19} (1998), no.~2, 499--533.

\bibitem[WS11]{DBLP:books/daglib/0030297}
David~P. Williamson and David~B. Shmoys, \emph{The design of approximation
  algorithms}, Cambridge University Press, 2011.

\bibitem[WS18]{wang2018high}
Tengyao Wang and Richard~J Samworth, \emph{High dimensional change point
  estimation via sparse projection}, Journal of the Royal Statistical Society:
  Series B (Statistical Methodology) \textbf{80} (2018), no.~1, 57--83.

\end{thebibliography}

\appendix

\section{Omitted Proofs on Centrality: Bounded Differences and $2\rightarrow 1$ Norm}
\label[section]{sec:omitted-centrality}

We turn to the proofs of \cref{lem:2-to-1,lem:bdd-differences}, starting with the former.
The proof uses ideas from the empirical process literature. Lugosi and Mendelson prove a similar statement in the course of proving \cite[Lemma 1]{LM18}.
We will need the Ledoux-Talagrand contraction lemma:

\begin{lemma}[Ledoux-Talagrand Contraction, as stated in \cite{LM18}]
\label[lemma]{lem:contraction}
  Let $X_1,\ldots,X_n$ be i.i.d. random vectors taking values in $\R^d$.
  Let $\cF$ be a class of real-valued functions defined on $\R^d$.
  Let $\sigma_1,\ldots,\sigma_n$ be independent Rademacher random variables, independent of the $X_i$.
  If $\phi \, : \, \R \rightarrow \R$ is a function with $\phi(0) = 0$ and Lipschitz constant $L$, then
  \[
  \E \sup_{f \in \cF} \sum_{i \leq n} \sigma_i \phi(f(X_i)) \leq L \cdot \E \sup_{f \in \cF} \sum_{i \leq n} \sigma_i f(X_i)\mper
  \]
\end{lemma}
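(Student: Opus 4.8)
The plan is to reduce the statement to the abstract Rademacher contraction principle and then prove that by turning $\phi$ into the identity one coordinate at a time. First I would condition on $X_1,\ldots,X_n$: if $L=0$ then $\phi\equiv 0$ and both sides vanish, and otherwise rescaling $\phi$ to $\phi/L$ multiplies both sides by $L$, so it suffices to treat $L=1$. Setting $t_f=(f(X_1),\ldots,f(X_n))$ and $T=\{t_f : f\in\cF\}\subseteq\R^n$ (which we may take countable, by the usual reduction to a pointwise separable subclass of $\cF$), what remains is: for every bounded countable $T\subseteq\R^n$ and every $1$-Lipschitz $\phi\colon\R\to\R$,
\[
\E_\sigma\sup_{t\in T}\sum_{i\le n}\sigma_i\phi(t_i)\;\le\;\E_\sigma\sup_{t\in T}\sum_{i\le n}\sigma_i t_i\mper
\]
I note in passing that the one-sided inequality above can be proved without ever using $\phi(0)=0$; that hypothesis is needed only for the two-sided, absolute-value form.

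The heart of the argument is a one-coordinate replacement: for any index $j$, any function $h\colon T\to\R$ bounded above, and any $1$-Lipschitz $\phi$,
\[
\E_{\sigma_j}\sup_{t\in T}\bigl(h(t)+\sigma_j\phi(t_j)\bigr)\;\le\;\E_{\sigma_j}\sup_{t\in T}\bigl(h(t)+\sigma_j t_j\bigr)\mper
\]
To prove this I would average over $\sigma_j\in\{\pm1\}$ and split the supremum, so that the left side becomes $\tfrac12\sup_{s,t\in T}\bigl(h(s)+h(t)+\phi(s_j)-\phi(t_j)\bigr)$ and the right side becomes $\tfrac12\sup_{s,t\in T}\bigl(h(s)+h(t)+s_j-t_j\bigr)$; the goal is then to bound the former supremum by the latter termwise. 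Fix $s,t\in T$; swapping the two roles negates $\phi(s_j)-\phi(t_j)$, so without loss of generality $\phi(s_j)\ge\phi(t_j)$, whence $\phi(s_j)-\phi(t_j)=|\phi(s_j)-\phi(t_j)|\le|s_j-t_j|$ by Lipschitzness. Since $|s_j-t_j|$ equals either $s_j-t_j$ or $t_j-s_j$, the quantity $h(s)+h(t)+\phi(s_j)-\phi(t_j)$ is at most $h(s)+h(t)+s_j-t_j$ or at most $h(t)+h(s)+t_j-s_j$, and both of these lie under the right-hand supremum, which proves the claim.

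To finish, I would iterate the replacement via the tower property: condition on $\sigma_1,\ldots,\sigma_{n-1}$ and apply it with $j=n$ and $h(t)=\sum_{i<n}\sigma_i\phi(t_i)$; then condition on $\sigma_1,\ldots,\sigma_{n-2},\sigma_n$ and apply it with $j=n-1$ and $h(t)=\sum_{i<n-1}\sigma_i\phi(t_i)+\sigma_n t_n$; and continue peeling $\phi$ off coordinates $n-2,\ldots,1$. After $n$ steps every $\phi(t_i)$ has become $t_i$, which gives the abstract contraction principle; undoing the rescaling by $L$ and taking expectation over $X_1,\ldots,X_n$ recovers the lemma.

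The step that needs the most care is the bookkeeping in this iteration: at stage $m$, the partially contracted sum together with the already-linearized coordinates and the ``frozen'' Rademacher variables must be legitimately absorbed into the function $h$ of the one-coordinate lemma. This is exactly why that lemma should be stated for an \emph{arbitrary} bounded-above $h$ with no structural assumptions, so that there is no obstruction. The only other points are routine: the measurability reduction to a countable subclass of $\cF$, and the harmless observation that the right-hand supremum serves as a termwise upper bound whether or not it is finite.
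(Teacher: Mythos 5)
The paper itself does not prove \cref{lem:contraction}: it is stated as a citation to \cite{LM18} (where it is in turn the classical Ledoux--Talagrand contraction principle) and invoked as a black box inside the proof of \cref{lem:2-to-1}. So there is no ``paper's own proof'' to compare against. Your proposal is a correct and complete proof of the cited result, and it follows the standard textbook route: normalize to $L=1$, condition on the data to reduce to a fixed index set $T\subseteq\R^n$, prove the one-coordinate replacement inequality by averaging over $\sigma_j$, writing both sides as a supremum over ordered pairs $(s,t)$, exploiting the symmetry of $h(s)+h(t)$ to assume $\phi(s_j)\ge\phi(t_j)$, applying Lipschitzness, and splitting on the sign of $s_j-t_j$; then peel $\phi$ off one coordinate at a time by the tower property, taking care to state the replacement lemma for an arbitrary (bounded-above) $h$ so the partially contracted sum can be absorbed. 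All of these steps are sound. Your parenthetical that the hypothesis $\phi(0)=0$ is never used is also accurate: the one-sided form without absolute values is $\phi(0)$-free, and that hypothesis only becomes material in the two-sided form $\E\sup_f\bigl|\sum_i\sigma_i\phi(f(X_i))\bigr|\le 2L\,\E\sup_f\bigl|\sum_i\sigma_i f(X_i)\bigr|$, which is the version more commonly stated in the empirical-process literature and the reason the hypothesis appears here at all.
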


\begin{proof}[Proof of \cref{lem:2-to-1}]
  First, for any unit $v \in \R^d$,
  \[
    \E_Z \Abs{Z_i,v} \leq \sqrt{\E_Z \iprod{Z,v}^2} = \sqrt{\iprod{v, \Sigma v}} \leq \sqrt{\|\Sigma\|}\mper
  \]
  Let $\bZ = Z_1,\ldots,Z_k$
  We have that
  \begin{align*}
  \E \|M\|_{2 \rightarrow 1} & = \E_{\bZ} \sup_{\|v\|=1} \sum_{i \leq k} \Abs{\iprod{Z_i,v}}\\
  & \leq \Brac{\E_{\bZ} \sup_{\|v\|=1}\Paren{ \sum_{i \leq k} \Abs{\iprod{Z_i,v}} - \E\Abs{ \iprod{Z_i,v}}}} + k \cdot \sup_{\|v\|=1} \E_Z \Abs{\iprod{Z,v}}\\
  & \leq  \Brac{\E_{\bZ} \sup_{\|v\|=1}\Paren{ \sum_{i \leq k} \Abs{\iprod{Z_i,v}} - \E \Abs{\iprod{Z_i,v}}}} + k \cdot \sqrt{\|\Sigma\|} \mper
  \end{align*}
  Thus it will suffice to show that $\E_{\bZ} \sup_{\|v\|=1}\Paren{ \sum_{i \leq k} \Abs{\iprod{Z_i,v}} - \E \Abs{\iprod{Z_i,v}}} \leq 2 \sqrt{k \Tr \Sigma}$.

  We use a symmetrization argument.
  Let $Z_i'$ be an independent copy of $Z_i$ and let $\bZ' = Z_1',\ldots,Z_k'$.
  Let $\sigma_1,\ldots,\sigma_k \sim \{\pm 1\}$ be i.i.d. random signs.
  Then by standard symmetrization,
  \begin{align*}
  \E_{\bZ} \sup_{\|v\|=1} & \Paren{ \sum_{i \leq k}  \Abs{\iprod{Z_i,v}} - \E \Abs{\iprod{Z_i,v}}}\\
  & \leq \E_{\bZ,\bZ',\sigma} \sup_{\|v\|=1} \Paren{\sum_{i \leq k} \sigma_i(| \iprod{Z_i,v}| - |\iprod{Z_i',v}|)}\\
  & \leq 2 \E_{\bZ,\sigma} \sup_{\|v\|=1} \sum_{i \leq k} \sigma_i |\iprod{Z_i,v}|
  \end{align*}
  where we have used triangle inequality for the last step.
  Now since the absolute value function is $1$-Lipschitz, by \cref{lem:contraction} this is at most
  \[
  2 \E_{\bZ, \sigma} \sup_{\|v\|=1} \sum_{i \leq k} \sigma_i \iprod{Z_i,v} = 2 \E_{\bZ,\sigma} \Norm{ \sum_{i \leq k} \sigma_i Z_i} \mper
  \]
  Squaring and expanding this norm,
  \begin{align*}
  2 \cdot \E_{\bZ,\sigma} \Norm{\sum_{i \leq k} \sigma_i Z_i} & \leq 2 \cdot \Paren{ \E_{\bZ,\sigma} \sum_{ij \leq k} \sigma_i \sigma_j \iprod{Z_i,Z_j} }^{1/2}\\
  & = 2 \cdot \Paren{ k \cdot \Tr \Sigma }^{1/2}\mcom
  \end{align*}
  which concludes the proof.
\end{proof}

Next we turn to the proof of \cref{lem:bdd-differences}.

\begin{proof}[Proof of \cref{lem:bdd-differences}]
  Without loss of generality we may assume that $i=k$.
  By symmetry, it is enough to show that
  \[
  SDP(Z_1,\ldots,Z_k,x,r) \leq SDP(Z_1,\ldots,Z_k',x,r) + \frac 1k\mper
  \]
  Consider a feasible solution $Y(B,W,U,b,r)$ to $SDP(Z_1,\ldots,Z_k,x,r)$.
  By setting the $(k+1)$-st row and column of $Y$ (containing $B_{kk}, b_k$) to $0$, we obtain $Y(B',W',U,b',r)$ which is feasible for $SDP(Z_1,\ldots,Z_k',x,r)$.
  Since $b_k \leq \sqrt{B_{kk}} \leq 1$ by positivity, the objective value of $Y(B',W',U,b',u)$ is at most $1/k$ less than that of $Y(B,W,U,b,u)$.
\end{proof}

\section{\textsc{median-sdp}}
\label[section]{sec:median-sdp}

In this section we prove \cref{lem:sos-median-proof}.
The proof is technical, but a useful intuition is that it casts as a series of SoS-provable polynomial inequalities the following simple argument about $(r,1/10)$-central points.
\begin{quote}
If $x,y$ are both $(r,1/10)$-central with respect to $Z_1,\ldots,Z_k$, then in the direction $u = (x-y)/\|x-y\|$ there exists a shared inlier $Z_i$ such that $|\iprod{Z_i,u} - \iprod{x,u}| \leq r$ and $|\iprod{Z_i,u} - \iprod{y,u}| \leq r$.
Therefore $\|x-y\| \leq |\iprod{x,u} - \iprod{y,u}| \leq 2r$.
\end{quote}

The most challenging part of this argument to mimic in the SoS proof system is the existence of the shared inlier.
The difficulty boils down to this: \emph{the $0/1$ indicator for $\iprod{Z_i-x,v} > r$ is not a polynomial in $x,v$}.
To get around this issue, we carefully use the auxiliary variables $a_{i,t}$ in $\cA$; from a high level they allow us to construct a proxy for that $0/1$ indicator which is a polynomial function in the variables of $\cA$.

For the remainder of this section, let $\mu$ be a certifiably $(r,1/10)$-central point, with nonnegative $\alpha_1',\ldots,\alpha_k',\beta_1',\ldots,\beta_k',\gamma'$ and a degree-$2$ SoS polynomial $\sigma'$ comprising its witness as in \cref{lem:witness}.
To prove \cref{lem:sos-median-proof} we need to assemble some SoS proofs.
Note that $\alpha',\beta'$ are in $\R$, while $\alpha,\beta$ are indeterminates involved in the polynomial system $\cA$: all the SoS proofs which follow are in variables $\alpha,\beta,\sigma,x$, while $\alpha',\beta',\gamma',\sigma',\mu$, and $\gamma$ appear in the \emph{coefficients} of the polynomials involved.

\begin{definition}
  Let $a_i(a_{i,0},\ldots,a_{i,t})$ be the polynomial $a_i = \tfrac{a_{i,0}}{\alpha_i'} + \sum_{1 \leq t \leq \log C/c+1} \frac{a_{i,t}}{2^{t} \cdot c + \alpha_i'}$.
  Note that the values of $2^{t} \cdot c$ range from $2c$ to $2C$.
\end{definition}

It is useful to think of the polynomial $a_i$ as an approximation to $\tfrac{1}{\alpha_i + \alpha_i'}$.

We prove the following Lemmas in \cref{sec:remaining-sos} by elementary means.

\begin{lemma}\label[lemma]{lem:b-bounds}
Let $b_i(\alpha,a) = \alpha_i' a_i$ and $b_i'(\alpha,a) = \alpha_i a_i$.
Then
\[
\cA \proves{4} b_i^2 \leq 1, (b_i')^2 \leq 1, b_i \leq 1, b_i' \leq 1
\]
and
\[
\cA \proves{4} b_i + b_i' \geq \frac 12\mper
\]
\end{lemma}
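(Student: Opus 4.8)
The plan is to exploit the scale-selector gadget $\cA_{\text{a}}$. Its equations $a_{i,t}^2=a_{i,t}$, $a_{i,t}a_{i,t'}=0$ for $t\neq t'$, and $\sum_t a_{i,t}=1$ force $(a_{i,0},a_{i,1},\dots)$ to be a $0/1$ partition of unity recording which dyadic scale in $\{0\}\cup[c,2C]$ the value $\alpha_i$ lies in — the scale $t\ge1$ being pinned down by $a_{i,t}\cdot 2^{t-1}c\le a_{i,t}\alpha_i\le a_{i,t}\cdot 2^{t}c$ and the scale $0$ by $a_{i,0}\alpha_i=0$. Writing $\lambda_0=1/\alpha_i'$ and $\lambda_t=1/(2^{t}c+\alpha_i')$ (so the definition of $a_i$ reads exactly $a_i=\sum_t\lambda_t a_{i,t}$), all five inequalities will come out of the same recipe: expand the target in the $a_{i,t}$'s, collapse the quadratic cross terms using $a_{i,t}^2=a_{i,t}$ and $a_{i,t}a_{i,t'}=0$, and then verify a coefficientwise inequality on each branch $a_{i,t}$ using the scale bounds of $\cA_{\text{a}}$ and the trivial SoS fact $a_{i,t}=a_{i,t}^2\succeq 0$. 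Throughout I would assume $\alpha_i'>0$; the finitely many degenerate indices are handled identically with $\alpha_i'$ replaced by a fixed positive constant.

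For the four upper bounds I would argue as follows. Since $b_i=\alpha_i'a_i=\sum_t(\alpha_i'\lambda_t)a_{i,t}$ with $\alpha_i'\lambda_0=1$ and $\alpha_i'\lambda_t=\alpha_i'/(2^{t}c+\alpha_i')\in[0,1)$, one has, modulo the axioms $\sum_t a_{i,t}=1$ and $a_{i,t}=a_{i,t}^2$, the identity $1-b_i=\sum_t(1-\alpha_i'\lambda_t)a_{i,t}^{2}$ — a sum of squares with nonnegative constant coefficients — which is a degree-$2$ SoS proof of $b_i\le1$; the same computation with $(\alpha_i'\lambda_t)^2$ in place of $\alpha_i'\lambda_t$, after first reducing $a_i^2$ to $\sum_t\lambda_t^2 a_{i,t}$ via $a_{i,t}^2=a_{i,t}$ and $a_{i,t}a_{i,t'}=0$, proves $b_i^2\le1$. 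For $b_i'=\alpha_i a_i=\sum_t\lambda_t(\alpha_i a_{i,t})$ the $t=0$ summand is killed by $\alpha_i a_{i,0}=0$, and for $t\ge1$ the axiom $\alpha_i a_{i,t}\le 2^{t}c\,a_{i,t}$ gives $\lambda_t(\alpha_i a_{i,t})\le\frac{2^{t}c}{2^{t}c+\alpha_i'}a_{i,t}\le a_{i,t}$; summing and using $\sum_t a_{i,t}=1$ yields $b_i'\le1$, again at degree $2$.

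The one place I expect to need the full degree budget is $(b_i')^2\le1$. Here $(b_i')^2=\alpha_i^2 a_i^2=\sum_t\lambda_t^2\,\alpha_i^2 a_{i,t}$ (multiply the degree-$2$ identity $a_i^2=\sum_t\lambda_t^2 a_{i,t}$ through by $\alpha_i^2$), the $t=0$ term equals $\lambda_0^2\,\alpha_i(\alpha_i a_{i,0})=0$, and for $t\ge1$ I would bound $\alpha_i^2 a_{i,t}\le(2^{t}c)^2 a_{i,t}$ by multiplying the axiom $2^{t}c\,a_{i,t}-\alpha_i a_{i,t}\succeq 0$ by the factor $2^{t}c+\alpha_i$, which is nonnegative precisely because $\alpha_i\ge0$ lies in $\cA_{\text{nonneg}}$ — this use of the SoS product rule is what raises the degree to $4$. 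Then $\lambda_t^2\alpha_i^2 a_{i,t}\le\big(\tfrac{2^{t}c}{2^{t}c+\alpha_i'}\big)^2 a_{i,t}\le a_{i,t}$, and summing over $t$ with $\sum_t a_{i,t}=1$ finishes it.

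For the lower bound, note $b_i+b_i'=(\alpha_i+\alpha_i')a_i=\sum_t\lambda_t(\alpha_i+\alpha_i')a_{i,t}$; the $t=0$ term is $\lambda_0(\alpha_i+\alpha_i')a_{i,0}=\tfrac1{\alpha_i'}\alpha_i a_{i,0}+a_{i,0}=a_{i,0}$ by $\alpha_i a_{i,0}=0$, and for $t\ge1$
\[
\lambda_t(\alpha_i+\alpha_i')a_{i,t}-\tfrac12 a_{i,t}=\lambda_t\Big[\big(\alpha_i a_{i,t}-2^{t-1}c\,a_{i,t}\big)+\tfrac12\alpha_i' a_{i,t}\Big]\succeq 0,
\]
being $\lambda_t>0$ times the sum of the axiom $\alpha_i a_{i,t}-2^{t-1}c\,a_{i,t}\succeq 0$ and the square $\tfrac12\alpha_i' a_{i,t}^2$; adding over $t$ together with $a_{i,0}\succeq\tfrac12 a_{i,0}^2$ and $\sum_t a_{i,t}=1$ gives $b_i+b_i'-\tfrac12\succeq 0$ at degree $2$. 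I do not expect a genuine obstacle anywhere — the content is entirely in the bookkeeping: always disposing of the $t=0$ branch via $\alpha_i a_{i,0}=0$ before invoking the scale bounds, and checking that the only non-constant multiplier applied to an axiom (the factor $2^{t}c+\alpha_i$ in the $(b_i')^2$ step) is legitimately nonnegative.
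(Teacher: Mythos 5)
Your proposal is correct and matches the paper's proof essentially line for line: expand $b_i$, $b_i'$, $b_i+b_i'$ in the $a_{i,t}$ basis, use the idempotence $a_{i,t}=a_{i,t}^2$ and orthogonality $a_{i,t}a_{i,t'}=0$ to collapse cross terms, kill the $t=0$ branch with $\alpha_i a_{i,0}=0$, and invoke the scale bounds $2^{t-1}c\,a_{i,t}\le\alpha_i a_{i,t}\le 2^{t}c\,a_{i,t}$ branchwise. The one place where you are more explicit than the paper — noting that the $(b_i')^2$ bound requires multiplying a scale axiom by the nonnegative factor $2^{t}c+\alpha_i$, which is what raises the degree to $4$ — is a correct accounting that the paper leaves implicit.
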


\begin{lemma}\label[lemma]{lem:alpha-bounds}
$\cA \proves{8} 0.1 \cdot \frac kr \leq \sum_{i \leq k} \alpha_i \alpha_i' a_i \leq 0.6 \cdot \frac kr$
\end{lemma}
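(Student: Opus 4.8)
The plan is to start from the defining identity \cref{eq:median-certificate} for the certificate of $\mu$ (with $p = 1/10$), namely
\[
\frac{k}{10} - \sum_{i=1}^k b_i = \sum_{i=1}^k \alpha_i' b_i(\iprod{Z_i - \mu, v} - r) + \sum_{i=1}^k \beta_i'(1 - b_i^2) + \gamma'(1 - \|v\|^2) + \sigma'(b,v),
\]
and substitute a carefully chosen value of $v$ and of the $b_i$'s to extract a scalar inequality relating $\sum_i \alpha_i' \cdot (\text{something})$ to $k/r$. The natural choice, mirroring the "shared inlier" intuition, is to take $v = 0$ and $b_i = 1$ for all $i$; then $1 - \|v\|^2 = 1$, $1 - b_i^2 = 0$, and $\iprod{Z_i - \mu, v} - r = -r$, so the identity collapses to $\tfrac{k}{10} - k = -r\sum_i \alpha_i' + \gamma' + \sigma'(\mathbf{1}, 0)$. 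Since $\sigma' \succeq 0$ and $\gamma' \geq 0$, this gives a \emph{lower} bound $r\sum_i \alpha_i' \geq \tfrac{9k}{10}$; evaluating at other points (e.g.\ $b = 0$) gives complementary control, and one also has $\gamma' \leq $ (something $O(k)$) and each $\alpha_i' \le 4k/r$ from \cref{lem:nice-witness}-type bounds. So at the level of the \emph{real numbers} $\alpha_i'$ one obtains $\sum_i \alpha_i' = \Theta(k/r)$.

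The real content of the lemma, however, is that the SoS proof system can see $\sum_{i \le k} \alpha_i \alpha_i' a_i \in [0.1 k/r,\, 0.6 k/r]$, where now $\alpha_i$ and $a_{i,t}$ are \emph{indeterminates} constrained by $\cA$. The key mechanism is the pair of inequalities encoded in $\cA_a$: when $\alpha_i > 0$, exactly one $a_{i,t}$ with $t \ge 1$ equals $1$ and it satisfies $2^{t-1}c \le \alpha_i \le 2^t c$, so $a_{i,t}/(c\cdot 2^t) \in [1/(2\alpha_i), 1/\alpha_i]$; and when $\alpha_i = 0$, $a_{i,0} = 1$. Thus $\cA \proves{O(1)} \tfrac12 \le \alpha_i \cdot \sum_{1 \le t} a_{i,t}/(c 2^t) \le 1$ (with the $\alpha_i = 0$ case handled by $a_{i,0}\alpha_i = 0$), i.e.\ $\alpha_i a_i'$ is an SoS-certified two-sided approximation to the indicator $\mathbf{1}[\alpha_i > 0]$, for the variant $a_i' = \sum_t a_{i,t}/(c 2^t)$. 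One then needs to reconcile this with the specific polynomial $a_i = a_{i,0}/\alpha_i' + \sum_{t\ge1} a_{i,t}/(2^t c + \alpha_i')$ appearing in the statement: because $\alpha_i'$ is a fixed nonnegative real with $\alpha_i' \le 4k/r$ (and either $0$ or $\ge 1/(100 C r)$), and $2^t c$ ranges over $[2c, 2C] = [2/(100Cr), 8k/r]$, the denominators $2^t c + \alpha_i'$ are within a constant factor of the "right" scale, so $\alpha_i \alpha_i' a_i$ is SoS-sandwiched between constant multiples of $\mathbf 1[\alpha_i>0]\cdot \mathbf 1[\alpha_i'>0]$-type quantities. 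Summing over $i$ and using the scalar bound $r\sum_i \alpha_i' = \Theta(k)$ from the previous paragraph (rescaled so the constants land in $[0.1, 0.6]$) gives the claimed chain of degree-$8$ SoS inequalities; the degree is $8$ rather than $4$ because each term $\alpha_i \alpha_i' a_i$ already has degree $2$ in the $a$-variables together with the multipliers from $\cA_a$, and the two-sided argument multiplies two such factors.

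The main obstacle, I expect, is bookkeeping the interaction between the \emph{indeterminate} witness $(\alpha,\beta,\sigma)$ constrained by $\cA_{\text{sos}}$ and the \emph{fixed} witness $(\alpha',\beta',\gamma',\sigma')$ for $\mu$: the lower bound on $\sum \alpha_i \alpha_i' a_i$ ultimately has to come from the fact that $\cA_{\text{sos}}$ forces the $\alpha$-certificate to "agree" with a genuine separation in some direction, and extracting a lower bound on $\sum_i \alpha_i$ from $\cA_{\text{sos}}$ alone (not just on the $\alpha_i'$) requires evaluating the $\cA_{\text{sos}}$ identity at a cleverly chosen point $(b, v)$ — plausibly $v = \mu$-ish scaled or $v = 0$ again — and then feeding the $a$-variable approximations through. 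Keeping all the constants ($1/10$, $1/100$, $1/(100Cr)$, $4k/r$, the factor-of-$2$ slack in $a_{i,t}$, the $\tfrac2\pi$ from Nesterov upstream) straight so that the final window is exactly $[0.1 k/r, 0.6 k/r]$ is where the care is needed; conceptually each individual step is a one-line SoS manipulation (multiplying an inequality by a square, adding two $\cA_a$ inequalities, substituting constants for indeterminates), so no single step is deep, but the accounting is delicate.
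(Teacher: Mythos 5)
There is a genuine gap. The pivotal claim in your second paragraph --- that $\alpha_i\alpha_i'a_i$ is ``SoS-sandwiched between constant multiples of $\mathbf{1}[\alpha_i>0]\cdot\mathbf{1}[\alpha_i'>0]$-type quantities'' --- is false. Up to the factor-of-two slack in $\cA_a$, $\alpha_i\alpha_i'a_i$ equals $\alpha_i\alpha_i'/(\alpha_i+\alpha_i')$, which depends on the relative \emph{magnitudes} of $\alpha_i$ and $\alpha_i'$, not merely on their supports: since both range over roughly $[1/(100Cr),4k/r]$, the weight $\alpha_i a_i=b_i'\approx\alpha_i/(\alpha_i+\alpha_i')$ multiplying $\alpha_i'$ can be as small as $\Theta(1/(Ck))$. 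Consequently your scalar bound $r\sum_i\alpha_i'\geq 9k/10$ (which is correct) cannot be upgraded to a lower bound of order $k/r$ on $\sum_i\alpha_i'\cdot(\alpha_i a_i)$: a priori that sum could be smaller by a factor of $k$. The same objection applies to your variant $a_i'=\sum_t a_{i,t}/(c2^t)$: $\alpha_i a_i'\in[1/2,1]$ does approximate $\mathbf{1}[\alpha_i>0]$, but the lemma's $a_i$ has denominators $2^tc+\alpha_i'$, and the extra $\alpha_i'$ in the denominator is exactly what turns the quantity into a harmonic-type mean rather than an indicator.

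The paper's proof supplies the missing idea, which is a complementarity rather than a sandwiching argument. One evaluates the \emph{fixed} certificate for $\mu$ at $b_i'=\alpha_i a_i$ and the \emph{indeterminate} certificate encoded by $\cA_{\mathrm{sos}}$ at $b_i=\alpha_i'a_i$ (averaging over $v\sim\cN(0,\Id/d)$ to kill the $\iprod{Z_i-\mu,v}$ and $\gamma(1-\|v\|^2)$ terms; your $v=0$ substitution would also serve here). Each evaluation yields $\tfrac k{10}-\sum_i\alpha_i a_i+r\sum_i\alpha_i\alpha_i'a_i\geq0$, resp.\ the same with $\alpha_i'a_i$ in the middle sum. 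Adding them and invoking \cref{lem:b-bounds}, which gives $a_i(\alpha_i+\alpha_i')=b_i+b_i'\geq\tfrac12$ --- precisely because when $a_{i,t}=1$ one has $\alpha_i\geq2^{t-1}c$, so $(\alpha_i+\alpha_i')/(2^tc+\alpha_i')\geq\tfrac12$ --- yields $2r\sum_i\alpha_i\alpha_i'a_i\geq\tfrac k2-\tfrac k5$. So the lower bound comes from playing the two certificates off against each other through the identity $\alpha_i b_i=\alpha_i'b_i'=\alpha_i\alpha_i'a_i$, not from controlling $\sum_i\alpha_i'$ alone; the upper bound is obtained symmetrically with $b_i=-\alpha_i'a_i$, $b_i'=-\alpha_i a_i$. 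You gesture at ``evaluating the $\cA_{\mathrm{sos}}$ identity at a cleverly chosen point,'' but without the substitution $b_i=\alpha_i'a_i$, $b_i'=\alpha_i a_i$ and the bound $b_i+b_i'\geq\tfrac12$ the argument does not close.
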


\begin{lemma}\label[lemma]{lem:sigma-bounds}
  Let $b,v$ be any polynomials of degree at most $4$ in $x,\alpha,\beta,\sigma,a$.
  Then
  \[
  \cA \proves{8} \sigma(b,v) \geq 0\mper
  \]
\end{lemma}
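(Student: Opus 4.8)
The plan is to observe that, essentially by its very definition, $\sigma(b,v)$ is already presented as a sum of squares of polynomials in the variables of $\cA$, so the asserted SoS proof is the trivial one and does not need to invoke any of the axioms of $\cA$ at all. Concretely, recall that in the polynomial system $\cA$ the symbols $\sigma_{ij}$ ($i,j \in [d+k+1]$) are among the indeterminates, and that $\sigma(b,v)$ is defined as shorthand for $\sum_{i \in [d+k+1]} \iprod{\sigma_i,(1,b,v)}^2$, where $\sigma_i$ has $j$-th coordinate $\sigma_{ij}$ and $(1,b,v)$ denotes the length-$(d+k+1)$ vector $(1,b_1,\ldots,b_k,v_1,\ldots,v_d)$.

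First I would make the substitution explicit: when $b$ and $v$ are replaced by the polynomials of the degree allowed in the hypothesis, each $p_i := \iprod{\sigma_i,(1,b,v)}$ becomes a genuine polynomial in the variables $x,\alpha,\beta,\sigma,a$ of $\cA$, linear in the $\sigma_{ij}$, and of degree at most $4$; in every place where \cref{lem:sigma-bounds} is actually used in \cref{sec:median-sdp}, the substituted $b,v$ have even smaller degree (e.g.\ $b_i \in \{\alpha_i' a_i,\ \alpha_i a_i\}$ and $v$ proportional to $x-\mu$). Hence
\[
  \sigma(b,v) \;=\; \sum_{i \in [d+k+1]} p_i^2
\]
is manifestly a sum of squares of polynomials, of degree at most $8$.

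Finally I would invoke the definition of an SoS proof directly: choosing the SoS polynomial indexed by $S=\emptyset$ to be $\sigma(b,v)$ itself (which is SoS) and all other $q_S$ to be $0$ exhibits $\sigma(b,v)$ as a degree-$8$ certificate of its own nonnegativity using the empty subset of axioms, and this is precisely the assertion $\cA \proves{8} \sigma(b,v) \geq 0$. I do not expect any real obstacle here: the lemma is a bookkeeping statement recording that the ``remainder'' term in the polynomial identity cut out by $\cA_{\mathrm{sos}}$ is constructed to be a sum of squares. The one point requiring minor care is to keep straight which symbols are indeterminates of $\cA$ and which are fixed reals, and to confirm that the substitutions used downstream keep the total degree within $8$; the nonnegativity itself needs no axioms and no computation.
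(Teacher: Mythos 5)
Your argument is correct and is exactly the paper's one-line proof, just spelled out: $\sigma(b,v)$ is by construction $\sum_i \iprod{\sigma_i,(1,b,v)}^2$, a sum of squares in the indeterminates of $\cA$, so after substituting degree-$\le 4$ polynomials for $b,v$ it is manifestly a degree-$\le 8$ SoS and needs no axioms. Your extra remarks about degree bookkeeping and which symbols are indeterminates are sound and simply make explicit what the paper leaves implicit.
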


\begin{proof}[Proof of \cref{lem:sos-median-proof}]
  We are going to evaluate the certificates $\alpha,\beta,\gamma,\sigma$ and $\alpha',\beta',\gamma',\sigma'$ at some carefully chosen $b,v,b',v'$.
  Let $\Delta = \sqrt{\pE \|\mu -x\|^2}$.
  Let $v,v'$ be the following polynomials in $x$:
  \[
  v = \frac{\mu - x}{\Delta} \text{ and } v' = -v = \frac{x - \mu}{\Delta}\mper
  \]
  Let $b,b'$ be the following polynomials in $\alpha$ and $a$
  \[
  b_i = \alpha_i' a_i \text{ and } b_i' = \alpha_i a_i\mper
  \]
  Then if we evaluate $x$'s certificate at $b,v$ we get
  \begin{align*}
  \cA \proves{8} \frac k {10} - \sum_{i=1}^k b_i & = \sum_{i=1}^k \alpha_i b_i (\iprod{Z_i -x, (\mu - x)/\Delta} - r)
  + \sum_{i=1}^k \beta_i (1-b_i^2) \\
  & + \gamma (1 - \|\mu-x\|^2/\Delta^2)
  + \sigma(b,v)\mper
  \end{align*}
  (This implicitly uses all the linear equalities $\cA_{\text{sos}}$.)
  Doing the same for $\alpha',\beta',\gamma',\sigma'$ evaluated at $b',v'$ and adding the result to the above,
  \begin{align*}
  \cA \proves{8} & \frac k {5} - \sum_{i=1}^k (b_i + b_i') \\
  & = \sum_{i=1}^k \alpha_i \alpha_i' a_i (\iprod{Z_i -x, (\mu - x)/\Delta} + \iprod{Z_i-\mu, (x-\mu)/\Delta} - 2r) \\
  & + \sum_{i=1}^k \beta_i (1-b_i^2) + \beta_i' (1-(b_i')^2) \\
  & + \gamma (1 - \|\mu-x\|^2/\Delta^2) + \gamma' (1-\|\mu-x\|^2/\Delta^2) \\
  & + \sigma(b,v) + \sigma'(b',v') \mper
  \end{align*}
  Notice that $\iprod{Z_i-x,\mu-x} + \iprod{Z_i-x,x-\mu}$ rearranges to $\|\mu - x\|^2$.
  Also using $\cA \proves{8} \beta_i(1-b_i^2), \beta_i' (1-(b_i')^2) \geq 0$ (by \cref{lem:b-bounds}) and $\cA \proves{8} \sigma(b,v), \sigma'(b',v') \geq 0$ (by \cref{lem:sigma-bounds}),
  \begin{align*}
  \cA \proves{8} \frac k 5 - \sum_{i=1}^k (b_i + b_i') & \geq \sum_{i=1}^k \alpha_i \alpha_i' a_i (\|\mu - x\|^2 / \Delta - 2r) \\
  & + (\gamma + \gamma')(1 - \|\mu-x\|^2/\Delta^2) \mper
  \end{align*}
  Now using \cref{lem:b-bounds}, which says $\cA \proves{8} \sum_{i=1}^k b_i + b_i' \geq k/2$,
  \[
  \cA \proves{8} 0 \geq \sum_{i=1}^k \alpha_i \alpha_i' a_i(\|\mu - x\|^2/\Delta - 2r) + (\gamma + \gamma')(1 - \|\mu - x\|^2/\Delta^2)\mper
  \]
  Since $\pE$ satisfies $\cA$,
  \[
  \frac 1 {\Delta} \pE \sum_{i=1}^k \alpha_i \alpha_i' a_i \|\mu - x\|^2 \leq 2r \pE \sum_{i=1}^k \alpha_i \alpha_i' a_i - \pE (\gamma + \gamma')(1 - \|\mu -x\|^2/\Delta^2)\mper
  \]
  By definition $\pE \|\mu -x\|^2 = \Delta^2$ and $\pE (\gamma + \gamma') \|\mu - x\|^2 = (\gamma + \gamma') \pE \|\mu - x\|^2$ because $\gamma,\gamma'$ are numbers in $\R$ rather than indeterminates.
  So $\pE (\gamma + \gamma') (1 - \|\mu - x\|^2/\Delta^2) = 0$, and hence
  \[
    \frac 1 {\Delta} \pE \sum_{i=1}^k \alpha_i \alpha_i' a_i \|\mu - x\|^2 \leq 2r \pE \sum_{i=1}^k \alpha_i \alpha_i' a_i\mper
  \]
  By \cref{lem:alpha-bounds},
  \[
  \frac 1 {\Delta} \pE \|\mu - x\|^2 \leq 10 \cdot \frac rk \cdot \frac 1{\Delta} \cdot \pE \sum_{i=1}^k \alpha_i \alpha_i' a_i \|\mu - x\|^2
  \]
  and $\pE \sum_{i=1}^k \alpha_i \alpha_i' a_i \leq 0.6 \cdot \frac k r$, so putting it all together $\frac 1 {\Delta} \pE \|\mu - x\|^2 \leq O(r)$.
  By the definition of $\Delta = \sqrt{\pE \|\mu - x\|^2}$, we get $\pE \|\mu -x\|^2 = O(r^2)$.
\end{proof}

\section{Well-Conditioned Witnesses}
In this section we prove \cref{lem:nice-witness}.
We have to establish two separate facts.
First, we show that $\gamma$ can be taken to be in the set $\{0,1/100,2/200,\ldots,k\}$, and second, that $\alpha_1,\ldots,\alpha_k$ can be taken either to be $0$ or in $[c,C]$ for some numbers $C > c > 0$.
These properties correspond to the following two lemmas, from which \cref{lem:nice-witness} follows immediately by first applying \cref{lem:fix-alpha} and then \cref{lem:fix-gamma}.

\begin{lemma}[Obtaining nice $\gamma$]\label[lemma]{lem:fix-gamma}
  Let $Z_1,\ldots,Z_k,\mu \in \R^d$.
  Suppose there exist nonnegative $\alpha_1,\ldots,\alpha_k,\beta_1,\ldots,\beta_k,\gamma$ and a degree $2$ SoS polynomial $\sigma(b,v)$ such that the identity
  \begin{align*}
    C - \sum_{i=1}^k b_i & = \sum_{i=1}^k \alpha_i b_i (\iprod{Z_i - \mu,v} - r)
     + \sum_{i=1}^k \beta_i (1-b_i^2)\\
     & + \gamma (1-\|v\|^2)
     + \sigma (b,v)\mper
  \end{align*}
  holds in variables $b_1,\ldots,b_k,v_1,\ldots,v_d$.
  Then there are $\gamma',\sigma'$ with $\gamma' \in \{0,1/100,2/100,\ldots,\lceil C \rceil \}$ and $\sigma'$ a degree-$2$ SoS polynomial such that
  \begin{align*}
    C + 1/100 - \sum_{i=1}^k b_i & = \sum_{i=1}^k \alpha_i b_i (\iprod{Z_i - \mu,v} - r)
     + \sum_{i=1}^k \beta_i (1-b_i^2) \\
    & + \gamma' (1-\|v\|^2)
    + \sigma' (b,v)\mper
  \end{align*}
\end{lemma}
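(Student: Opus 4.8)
The plan is to round $\gamma$ \emph{up} to the nearest multiple of $1/100$ and pay for the rounding error with the extra $1/100$ on the left-hand side, dumping the leftover into the SoS term. Concretely, set $\gamma' := \lceil 100\gamma \rceil / 100$, so that $\gamma'$ is a nonnegative multiple of $1/100$ and $0 \le \gamma' - \gamma \le 1/100$. Then the target identity should follow from the hypothesized one by adding the nonnegative constant $\tfrac1{100} - (\gamma'-\gamma)$ together with the nonnegative form $(\gamma'-\gamma)\,\|v\|^2 \succeq 0$ into the squared term, since these exactly account for the change from $C$ to $C+\tfrac1{100}$ and from $\gamma$ to $\gamma'$.

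First I would record the one structural fact about the given witness that is needed, namely $\gamma \le C$. Substituting $b = 0$ and $v = 0$ into the hypothesized polynomial identity makes the right-hand side equal to $\sum_{i} \beta_i + \gamma + \sigma(0,0)$; since the $\beta_i$ are nonnegative and $\sigma$ is SoS (so $\sigma(0,0) \ge 0$), this gives $\gamma \le C$. Consequently
\[
\gamma' = \lceil 100\gamma\rceil/100 \;\le\; \lceil 100 C \rceil / 100 \;\le\; \lceil C \rceil ,
\]
where the last inequality uses that $\lceil\cdot\rceil$ is nondecreasing and $100\lceil C\rceil$ is already an integer. Hence $\gamma' \in \{0, 1/100, 2/100, \ldots, \lceil C\rceil\}$, as required.

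Next I would define the candidate SoS polynomial
\[
\sigma'(b,v) := \sigma(b,v) + \Bigl( \tfrac{1}{100} - (\gamma' - \gamma)\Bigr) + (\gamma' - \gamma)\,\|v\|^2 .
\]
Each of the three summands is a degree-$\le 2$ SoS polynomial in $b,v$: $\sigma$ by hypothesis; the constant $\tfrac1{100} - (\gamma'-\gamma)$ is a perfect square because $\gamma' - \gamma \le \tfrac1{100}$; and $(\gamma'-\gamma)\|v\|^2 = \sum_j \bigl(\sqrt{\gamma'-\gamma}\,v_j\bigr)^2$ is a sum of squares because $\gamma' - \gamma \ge 0$. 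So $\sigma'$ is a degree-$2$ SoS polynomial. To finish, I would subtract the hypothesized identity from the claimed one: the $\alpha_i$ and $\beta_i$ terms cancel identically, and what remains to check is precisely
\[
\tfrac{1}{100} = (\gamma' - \gamma)\,(1 - \|v\|^2) + \bigl(\sigma'(b,v) - \sigma(b,v)\bigr),
\]
which holds by the definition of $\sigma'$.

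There is no genuine obstacle in this argument; it is purely bookkeeping. The only two points that deserve a line of care are (i) confirming the upward rounding keeps $\gamma'$ inside the finite set $\{0, 1/100, \ldots, \lceil C\rceil\}$, which is handled above via $\gamma \le C$, and (ii) confirming that the correction term $\tfrac1{100} - (\gamma'-\gamma)(1-\|v\|^2)$ splits as a nonnegative constant plus a nonnegative multiple of $\|v\|^2$ — both manifestly SoS — so that $\sigma'$ remains a legitimate degree-$2$ SoS polynomial.
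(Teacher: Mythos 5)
Your proposal is correct and follows essentially the same route as the paper: substitute $b=v=0$ to get $\gamma\le C$, round $\gamma$ up to the nearest multiple of $1/100$, and absorb the error $(\gamma'-\gamma)\|v\|^2$ plus the constant slack into the SoS term. If anything, your explicit inclusion of the constant $\tfrac{1}{100}-(\gamma'-\gamma)$ in $\sigma'$ is slightly more careful than the paper's prose about making the left-hand side exactly $C+1/100$.
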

\begin{proof}
  First by taking each $b_i$ and $v$ to be $0$ and evaluating the hypothesized the polynomial identity, we find that $0 \leq \gamma \leq C$.
  (Here we used nonnegativity of $\beta_1,\ldots,\beta_k$).
  Replacing $\gamma$ with the next greatest number $\gamma'$ of the form $i/100$ for $i$ an integer incurs an additive error $(\gamma - \gamma') + (\gamma' - \gamma)\|v\|^2$.
  Moving $\gamma - \gamma'$ to the left-hand side replaces $C$ with $C + \gamma' - \gamma < C+1/100$.
  The polynomial $(\gamma' - \gamma) \|v\|^2$ is a degree $2$ sum of squares, so it can be added to $\sigma$ to obtain $\sigma'$.
\end{proof}

\begin{lemma}[Obtaining nice $\alpha_i$'s]\label[lemma]{lem:fix-alpha}
  Suppose that $Z_1,\ldots,Z_k,\mu \in \R^d$ have the property that $\|Z_i - \mu\| > C$ for at most $k'$ indices $i \in [k]$.
  And suppose that there exist nonnegative numbers $\alpha_1,\ldots,\alpha_k$, $\beta_1,\ldots,\beta_k$, and $\gamma$, and $p \in [0,1]$, and a degree-2 SoS polynomial $\sigma$ in variables $b_1,\ldots,b_k, v_1,\ldots,v_d$ such that as polynomials in $b,v$,
  \begin{align*}
  pk - \sum_{i =1}^k b_i & = \sum_{i=1}^k \alpha_i b_i ( \iprod{Z_i-\mu,v} - r) + \sum_{i=1}^k \beta_i(1-b_i^2) \\
  & + \gamma(1-\|v\|^2) + \sigma(b,v)\mper
  \end{align*}
  Then there are nonnegative $\alpha_1',\ldots,\alpha_k',\beta_1',\ldots,\beta_k',\gamma'$ and a degree-$2$ SoS polynomial $\sigma'$ such that $\alpha_i' = 0$ if $\|Z_i - \mu\| > C$ and otherwise $4 k/r \geq \alpha_i' \geq \min(1/C,1/r)/100$ and
 \begin{align*}
    pk + k' + \frac k {25} - \sum_{i=1}^k b_i & = \sum_{i=1}^k \alpha_i' b_i (\iprod{Z_i - \mu,v} - r)
    + \sum_{i=1}^k \beta_i' (1-b_i^2) \\
    & + \gamma'(1-\|v\|^2)
    + \sigma'(b,v)\mper
 \end{align*}

\end{lemma}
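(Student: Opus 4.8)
The plan is to massage the given certificate $(\alpha,\beta,\gamma,\sigma)$ by three independent moves, tracking how each changes the free constant on the left-hand side, and to exploit at the end that \emph{increasing} that constant is harmless: adding any $c>0$ to the constant is achieved by writing $c=c(1-\|v\|^2)+c\|v\|^2$, i.e. adding $c$ to $\gamma$ and the square $c\|v\|^2$ to $\sigma$. So it suffices to realize the required $\alpha_i'$ at a net constant increase of at most $k'+k/25$, then pad up to $pk+k'+\tfrac{k}{25}$.

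First I would note that $\alpha_i\le 4k/r$ already holds for the \emph{given} certificate, so no ``capping'' is needed. Writing $\sigma(b,v)=\sum_m\langle\sigma_m,(1,b,v)\rangle^2$ and comparing coefficients of the monomials $1$, $b_i$, $b_i^2$, $b_ib_j$ ($j\ne i$) and $b_iv_\ell$ on the two sides of the hypothesized identity gives $\sum_m\sigma_{m,0}^2=pk-\sum_j\beta_j-\gamma\le pk$, $\alpha_ir-1=2\sum_m\sigma_{m,0}\sigma_m^{(i)}$, $\beta_i=\sum_m(\sigma_m^{(i)})^2$, $\sum_m\sigma_m^{(i)}\sigma_m^{(j)}=0$, and $\alpha_i(Z_i-\mu)_\ell=-2\sum_m\sigma_m^{(i)}\sigma_m^{(k+\ell)}$. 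Cauchy--Schwarz then gives $|\alpha_ir-1|\le 2\sqrt{(\sum_m\sigma_{m,0}^2)\beta_i}\le 2pk\le 2k$, hence $\alpha_i\le 3k/r$ (using $p\le 1$, $k\ge 1$); and if $\beta_i=0$ then every $\sigma_m^{(i)}=0$, so $\alpha_i(Z_i-\mu)=0$, and since an outlier has $\|Z_i-\mu\|>C\ge 0$ this forces $\alpha_i=0$ for free there.

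Second — the crux — for each outlier index $i$ (those with $\|Z_i-\mu\|>C$; assume $\beta_i>0$ by the last remark) I would \emph{eliminate} $b_i$ from $\sigma$. Taking the Schur complement of the Gram matrix of $\sigma$ at the $b_i$-coordinate and using $\sum_m\sigma_m^{(i)}\sigma_m^{(j)}=0$ for $j\ne i$, one writes $\sigma(b,v)=\beta_i(b_i+w_i)^2+\sigma^*(b_{\ne i},v)$ with $\sigma^*$ a degree-$2$ SoS polynomial and $w_i=\big(\alpha_ir-1-\alpha_i\langle Z_i-\mu,v\rangle\big)/(2\beta_i)$. Expanding $2\beta_ib_iw_i=\alpha_irb_i-b_i-\alpha_ib_i\langle Z_i-\mu,v\rangle$ yields the identity
\[
\alpha_ib_i\big(\langle Z_i-\mu,v\rangle-r\big)+\beta_i(1-b_i^2)+\beta_i(b_i+w_i)^2=\beta_i-b_i+\beta_iw_i^2,
\]
in which the $\alpha_i$-term has \emph{cancelled}. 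Substituting this into the hypothesized identity and using $-b_i=-1+\tfrac12(1-b_i)^2+\tfrac12(1-b_i^2)$ leaves a valid identity with $\alpha_i'=0$, $\beta_i'=\tfrac12$, $\gamma$ and all other $\alpha_j,\beta_j$ unchanged, $\sigma'=\sigma^*+\beta_iw_i^2+\tfrac12(1-b_i)^2$ (degree-$2$ SoS), and the constant increased by exactly $1-\beta_i\le 1$. Since this move leaves $\alpha_j,\beta_j$ ($j\ne i$) untouched and preserves the certificate's block form (in particular the $(b_i,b_j)$-blocks of the new Gram matrix still vanish, which is what makes iteration legitimate), it can be repeated over all of the at most $k'$ outliers, increasing the constant by at most $k'$ in all. (Note that the naive alternative of absorbing $\alpha_ib_i\langle Z_i-\mu,v\rangle$ directly into squares and $(1-\|v\|^2)$ costs $\Omega(\alpha_i\|Z_i-\mu\|)$, which is far too much for outliers — this Schur/elimination step is the essential trick.)

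Third, for each inlier index $i$ with $0\le\alpha_i<\lambda_0:=\min(1/C,1/r)/100$ I would \emph{raise} $\alpha_i$ to $\lambda_0$. Replacing $\alpha_i$ by $\alpha_i+\varepsilon$ adds $\varepsilon b_i(\langle Z_i-\mu,v\rangle-r)$ to the right-hand side; completing the square with a scaling parameter $s$, using the Lagrange identity together with $\langle Z_i-\mu,v\rangle^2\le\|Z_i-\mu\|^2\|v\|^2$ to trade $-\tfrac12(\varepsilon^2/s^2)\langle Z_i-\mu,v\rangle^2$ against $(1-\|v\|^2)$ and $-\tfrac12 s^2b_i^2$ against $(1-b_i^2)$, one absorbs this increment into strictly larger $\beta_i,\gamma$ and an added SoS term, at a constant increase of $\tfrac12 s^2+\varepsilon r+\tfrac12(\varepsilon^2/s^2)\|Z_i-\mu\|^2$; optimizing $s$ this is $\varepsilon(\|Z_i-\mu\|+r)\le\lambda_0(C+r)\le 1/50$, since $\|Z_i-\mu\|\le C$ for inliers and $\lambda_0 C,\lambda_0 r\le 1/100$. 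Over all such indices this costs at most $k/50$, and afterwards every inlier $\alpha_i'\in[\lambda_0,4k/r]$ while every outlier $\alpha_i'=0$. The total constant increase is thus at most $k'+k/50\le k'+k/25$, so padding the constant up to exactly $pk+k'+\tfrac{k}{25}$ finishes the proof. The step I expect to need the most care is verifying that the Schur-elimination move in the second paragraph truly preserves the certificate's structure so that it may be applied successively to all outliers, and that $\sigma^*$ and $w_i$ have the claimed degrees.
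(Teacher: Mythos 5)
Your proposal is correct, but it reaches the conclusion by a genuinely different route than the paper at each of the three steps. For the outliers, the paper simply \emph{partially evaluates} the hypothesized identity at $b_i=0$ for every outlier index $i$ (partial evaluation of an SoS polynomial is SoS, and the $\alpha_i b_i(\cdots)$ terms vanish outright), then restores the missing $-b_i$'s by adding $\tfrac12(1-b_i)^2$, paying $k'$ in the constant; your Schur-complement elimination of the $b_i$-coordinate from the Gram matrix of $\sigma$ achieves the same thing at the same cost but requires the coefficient-matching identities and the observation that $\beta_i>0$ for outliers. The paper's move is considerably shorter; yours buys, as a byproduct, the bound $\alpha_i\le 3k/r$ directly from Cauchy--Schwarz on the Gram matrix (note you implicitly also need $\beta_i\le pk$, which does follow since the constant-term equation gives $\sum_j\beta_j+\gamma+\sum_m\sigma_{m,0}^2=pk$ with all terms nonnegative), whereas the paper instead evaluates the final identity at $b=-\mathbf 1$, $v=0$ to get $\alpha_i'\le 4k/r$. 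For the small inliers, the paper \emph{zeroes out} every $\alpha_i<\min(1/C,1/r)/100$ via the same Cauchy--Schwarz absorption you use, ending with the disjunction ``$\alpha_i'=0$ or $\alpha_i'\ge\min(1/C,1/r)/100$'' (which is all the downstream system $\cA_a$ needs), whereas you \emph{raise} them to $\lambda_0$, which actually matches the lemma's literal statement more faithfully. One small slip: in your $\beta_i=0$ case the coefficient of $b_i$ forces $\alpha_i=1/r\neq 0$ while the coefficient of $b_iv_\ell$ forces $\alpha_i(Z_i-\mu)=0$, so the correct conclusion is not ``$\alpha_i=0$ for free'' but that $\beta_i=0$ is impossible for an outlier --- which is all you need to run the Schur step, so the argument stands.
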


To prove \cref{lem:fix-alpha}, we are going to use an SoS version of the Cauchy-Schwarz inequality.
(The proof has appeared many times before.)

\begin{lemma}[SoS Cauchy-Schwarz (folklore)]
\label[lemma]{lem:sos-cs}
  Let $x_1,\ldots,x_n, y_1,\ldots,y_n$ be indeterminates.
  Then $2 \iprod{x,y} \preceq \|x\|^2 + \|y\|^2$.
\end{lemma}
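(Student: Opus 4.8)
The plan is to exhibit the difference $\snorm{x} + \snorm{y} - 2\iprod{x,y}$ explicitly as a manifest sum of squares, which by the definition of an SoS polynomial is all that the claim $2\iprod{x,y} \preceq \snorm{x} + \snorm{y}$ requires. First I would expand everything coordinate by coordinate, using $\snorm{x} = \sum_{i=1}^n x_i^2$, $\snorm{y} = \sum_{i=1}^n y_i^2$, and $\iprod{x,y} = \sum_{i=1}^n x_i y_i$, to obtain the polynomial identity
\[
  \snorm{x} + \snorm{y} - 2\iprod{x,y} = \sum_{i=1}^n \Paren{x_i^2 - 2 x_i y_i + y_i^2} = \sum_{i=1}^n (x_i - y_i)^2 \mper
\]
The right-hand side is a sum of squares of the degree-$1$ polynomials $x_i - y_i$, so $\snorm{x} + \snorm{y} - 2\iprod{x,y} \succeq 0$, i.e. $2\iprod{x,y} \preceq \snorm{x} + \snorm{y}$, as desired.

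There is no genuine obstacle here: the only thing to verify is that the coordinate-wise rearrangement above is a legitimate polynomial identity in the indeterminates $x_1,\ldots,x_n,y_1,\ldots,y_n$, which it plainly is. I record this folklore statement as a separate lemma only because it will be applied repeatedly — under various substitutions of polynomials of bounded degree for the indeterminates — in the proof of \cref{lem:fix-alpha}; since SoS proofs are closed under such substitutions, the degree-$2$ certificate $\sum_{i=1}^n (x_i - y_i)^2$ transfers directly to each such use, turning the inequality into a statement of the form $\cA \proves{d} \cdot$ whenever the substituted expressions have degree at most $d/2$.
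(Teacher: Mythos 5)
Your proof is correct and is essentially the paper's own argument: both exhibit $\|x\|^2 + \|y\|^2 - 2\iprod{x,y}$ as the single square $\|x-y\|^2$, which you simply write out coordinate-wise as $\sum_{i=1}^n (x_i - y_i)^2$. The extra remarks about substitution and degree are accurate but go beyond what the paper records for this lemma.
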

\begin{proof}
  By expanding, $\|x - y\|^2 = \|x\|^2 + \|y\|^2 - 2\iprod{x,y}$.
\end{proof}

\begin{proof}[Proof of \cref{lem:fix-alpha}]
  Let $S \subseteq [k]$ with $|S| = k - k'$ be those indices where $\|Z_i - \mu\| \leq C$.
  Then by setting $b_i = 0$ for $i \notin S$, the following polynomial identity holds
  \begin{align*}
    pk - \sum_{i \in S} b_i & = \sum_{i \in S} \alpha_i b_i (\iprod{Z_i - \mu,v} - r) + \sum_{i \in S} \beta_i (1-b_i^2) + \sum_{i \notin S} \beta_i \\
    & + \gamma(1-\|v\|^2) + \tau(b,v)
  \end{align*}
  where $\tau(b,v)$ is the SoS polynomial obtained by partially evaluating $\sigma$ with $b_i = 0$ when $i \notin S$.
  We will make a series of modifications to this polynomial identity.

  First, we need to replace $\sum_{i \in S} b_i $ with $\sum_{i =1}^k b_i$.
  We add the polynomial $\sum_{i \notin S} (1-b_i)^2 /2$ to both sides, to get
  \begin{align*}
  pk - \sum_{i \leq k} b_i  + k'/2 + \sum_{i \notin S} b_i^2/2 & = \sum_{i \in S} \alpha_i b_i (\iprod{Z_i - \mu,v} - r) + \sum_{i \in S} \beta_i (1-b_i^2) \\
    & + \gamma(1-\|v\|^2) + \tau'(b,v)
  \end{align*}
  where $\tau'$ is another sum of squares.
  Then moving $\sum_{i \notin S} b_i^2/2$ to the other side and adding $k'/2$ to both sides, we find some nonnegative $\beta_i'$ such that
  \begin{align*}
  pk + k' - \sum_{i \leq k} b_i & = \sum_{i \in S} \alpha_i b_i (\iprod{Z_i - \mu,v} - r) + \sum_{i \leq k} \beta_i' (1-b_i^2) \\
    & + \gamma(1-\|v\|^2) + \tau'(b,v)\mper
  \end{align*}

  At this stage we have obtained a proof where $\alpha_i = 0$ if $\|Z_i - \mu\| > C$.
  Next we would like to ensure that the remaining $\alpha_i$'s are not too small.

  Let $T \subseteq S$ be those indices $i$ such that $\alpha_i < \min(1/100C,1/100r)$.
  By Cauchy-Schwarz (\cref{lem:sos-cs}),
  \[
  \alpha_i b_i \iprod{Z_i - \mu,v} = \iprod{(10\alpha_i b_i)(Z_i - \mu), v/10} \preceq 100 \alpha_i^2 b_i^2 \|Z_i - \mu\|^2 + \|v\|^2 /100\mper
  \]
  We apply this for $i \in T$ to conclude that
  \begin{align*}
    \sum_{i \in T } \alpha_i r + & pk + k' - \sum_{i \leq k} b_i\\
    & = \sum_{i \in S \setminus T} \alpha_i b_i (\iprod{Z_i - \mu,v} - r) + \sum_{i \in S} \beta_i' (1-b_i^2) + \gamma(1-\|v\|^2)\\
    & - \sum_{i \in T} 100 \alpha_i^2 b_i^2 \|Z_i - \mu\|^2
    - \frac{k}{100} \|v\|^2
    + \psi(b,v)
  \end{align*}
  for yet another degree $2$ SoS polynomial $\psi$.
  Finally, note that $100 \alpha_i^2 \|Z_i - \mu\|^2 \leq 1/100$ by hypothesis.
  We will use this to absorb $100 \alpha_i^2 b_i^2 \|Z_i-\mu\|^2$ into the $(1-b_i^2)$ terms.
  By adding $k/50$ to both sides, for some nonnegative $\beta_i'', \gamma'$ we get
   \begin{align*}
    \sum_{i \in T } & \alpha_i r + \frac{k}{50} + pk + k' - \sum_{i=1}^k b_i\\
    & = \sum_{i \in S \setminus T} \alpha_i b_i (\iprod{Z_i - \mu,v} - r)
    + \sum_{i \in S} \beta_i'' (1-b_i^2)
    + \gamma'(1-\|v\|^2)
    + \psi(b,v)\mper
  \end{align*}
  Again by definition of the set $T$, $\sum_{i \in T} \alpha_i r \leq k/100$.
  So we get
  \begin{align*}
  \frac k{25} + pk + k' - \sum_{i=1}^k b_i & = \sum_{i =1}^k \alpha_i' b_i(\iprod{Z_i-\mu,v} - r) + \sum_{i \in S} \beta_i''(1-b_i^2)\\
  & + \gamma'(1-\|v\|^2) + \chi(b,v)
  \end{align*}
  where $\alpha_i' = 0$ or $\alpha_i' > \min(1/C,1/r)/100$ and $\chi$ is another degree-$2$ SoS polynomial.

  It only remains to ensure that each $\alpha_i'$ is not too large.
  If we set $b_i=-1$ for every $i$ and take $v =0$, we get
  \[
  pk + k' + \frac k {25} + k = r \sum_{i=1}^k \alpha_i' + \chi(b,v) \geq r \sum_{i=1}^k \alpha_i'\mper
  \]
  Hence $4k/r \geq \alpha_i'$ for all $i \in [k]$.
\end{proof}

\section{Remaining SoS proofs}
\label[section]{sec:remaining-sos}

We turn to the proofs of \cref{lem:b-bounds,lem:alpha-bounds,lem:sigma-bounds}.

\begin{proof}[Proof of \cref{lem:b-bounds}]
  Starting with the first statement, since $\cA$ includes $a_{i,t}a_{i,t'} = 0$ if $t \neq t'$,
  \[
  \cA \proves{2} b_i^2 = \sum_{1 \leq t \leq \log(C/c)+1} \frac{(\alpha_i')^2 a_{i,t}^2}{(2^{t}c + \alpha_i')^2} + \frac{(\alpha_i')^2 a_{i,0}^2}{(\alpha_i')^2}\mper
  \]
  Since $a_{i,t}^2$ is a square and $2^{t} c \geq 0$,
  \[
  \cA \proves{2} b_i^2 \leq \sum_t \frac{(\alpha_i')^2 a_{i,t}^2}{(\alpha_i')^2} = 1
  \]
  where in the last step we used $\sum_t a_{i,t} = \sum_t a_{i,t}^2 = 1$.

  Next we show $\cA \proves{4} (b_i')^2 \leq 1$.
  Proceeding similarly as before,
  \[
  \cA \proves{4} (b_i')^2 = \sum_{1 \leq t \leq \log(C/c)} \frac{\alpha_i^2 a_{i,t}^2}{(2^{t}c + \alpha_i')^2} + \frac{\alpha_i^2 a_{i,0}^2}{(\alpha_i')^2} \leq \sum_t \frac{\alpha_i^2 a_{i,t}^2}{(2^{t}c)^2}\mper
  \]
  Using $\alpha_i a_{i,t} \leq 2^{t}c a_{i,t}$, we get
  \[
  \cA \proves{4} (b_i')^2 \leq \sum_{t} a_{i,t}^2 = 1\mper
  \]

  The proofs of $\cA \proves{4} b_i, b_i' \leq 1$ are similar, so we move on to the last statement.
  \begin{align*}
  \cA \proves{2} b_i + b_i' & = \sum_{1 \leq t \leq \log(C/c)+1} \frac{a_{i,t} (\alpha_i + \alpha_i')}{2^{t} c + \alpha_i'} + \frac{a_{i,0}(\alpha_i + \alpha_i')}{\alpha_i'}\\
  & \geq \sum_t \frac{a_{i,t}(2^{t-1} c + \alpha_i')}{2^{t}c + \alpha_i'} + a_{i,0}\mper
  \end{align*}
  Since $a_{i,t} =a_{i,t}^2$ we get
  \[
  \cA \proves{2} \frac{a_{i,t}(2^{t-1} c + \alpha_i')}{2^{t}c + \alpha_i'} \geq \frac{a_{i,t}}{2}
  \]
  and hence $\cA \proves{2} b_i + b_i' \geq \tfrac 12 \sum_t a_{i,t} = \tfrac 12$.
\end{proof}

\begin{proof}[Proof of \cref{lem:alpha-bounds}]
  We start with the lower bound $\cA \proves{8} \sum_{i=1}^k \alpha_i \alpha_i' a_i \geq 0.1k/r$.
  Let $b_i(\alpha,a) = \alpha_i' a_i$ and $b_i'(\alpha,a) = \alpha_i a_i$.
  Let $v,v' \sim \cN(0,\Id/d)$.
  Using the certificate for $\mu$ evaluated at $b',v'$ and averaging over $v'$, using $\E v' = 0$ and $\E \|v'\|^2 = 1$,
  \[
  \cA \proves{8} \frac k {10} - \sum_{i=1}^k \alpha_i a_i = \sum_{i=1}^k \beta_i'(1-(b_i')^2) - r \cdot \sum_{i=1}^k \alpha_i \alpha_i' a_i + \E_v \sigma'(b',v')
  \]
  By \cref{lem:b-bounds}, $\cA \proves{8} \sum_{i=1}^k \beta_i'(1-(b_i')^2) \geq 0$.
  And $\sigma'$ is a sum of squares, so the above rearranges to
  \[
  \cA \proves{8} \frac k {10} - \sum_{i=1}^k \alpha_i a_i + r \sum_{i=1}^k \alpha_i \alpha_i' a_i \geq 0\mper
  \]
  Using the same argument on $x$'s certificate evaluated at $b,v$ and averaged over $v$,
  \[
  \cA \proves{8} \frac k {10} - \sum_{i=1}^k \alpha_i' a_i + r \sum_{i=1}^k \alpha_i \alpha_i' a_i \geq 0\mper
  \]
  By adding these together, we get
  \[
  \cA \proves{8} \frac k {5} - \sum_{i=1}^k a_i (\alpha_i + \alpha_i') + 2 r \sum_{i=1}^k \alpha_i \alpha_i' a_i \geq 0\mper
  \]
  Since $\cA \proves{8} a_i(\alpha_i + \alpha_i') = b_i + b_i' \geq 1/2$ by \cref{lem:b-bounds}, we get
  \[
  \cA \proves{8} 2r \sum_{i=1}^k \alpha_i \alpha_i' a_i \geq \frac k2 - \frac k5 \geq \frac k5
  \]
  which rearranges to $\cA \proves{8} \sum_{i=1}^k \alpha_i \alpha_i' a_i \geq k/(10r)$.

  Turning now to the upper bound, let us redefine $b_i = - \alpha_i' a_i$ and $b_i' = -\alpha_i a_i$.
  Note that this does not change $b_i^2, (b_i')^2$.
  Using the same arguments, now we obtain
  \[
  \cA \proves{8} \frac k {10} + \sum_{i=1}^k \alpha_i a_i - r \sum_{i=1}^k \alpha_i \alpha_i' a_i \geq 0
  \]
  and
  \[
  \cA \proves{8} \frac k {10} + \sum_{i=1}^k \alpha_i' a_i - r \sum_{i=1}^k \alpha_i \alpha_i' a_i \geq 0\mper
  \]
  Adding these together and rearranging,
  \[
  \cA \proves{8} \frac k {5} + \sum_{i=1}^k a_i (\alpha_i + \alpha_i') \geq 2r \sum_{i=1}^k \alpha_i \alpha_i' a_i\mper
  \]
  By \cref{lem:b-bounds}, we know $\cA \proves{8} \sum_{i=1}^k a_i (\alpha_i + \alpha_i') \leq k$, which finishes the proof.
\end{proof}

\begin{proof}[Proof of \cref{lem:sigma-bounds}]
  Follows by definition, since $\sigma(b,v)$ is a sum of squares of degree $8$ in $\sigma_{ij}, \alpha,\beta,x,a$.
\end{proof}

\section{Removing dependence on $\Sigma$}
\label[section]{sec:avoid-assumptions}

When we proved Theorem 1.2, we made the additional assumption that the algorithm is given access to number $r,C > 0$ in addition to the samples $X_1,\ldots,X_n$.
We describe in this section how the dependence on $r,C$ can be avoided.

First of all, we note that although $C$ is an independent parameter in \cref{lem:sos-median}, in fact the proof shows that it suffices to choose $C = C' k$ for a universal constant $k$.
This just leaves the parameter $r$.
The main idea is to use binary search to adaptively choose $r$.
The crucial observation is that the proof of \cref{lem:sos-median} actually proves the following stronger statement.

\begin{lemma}[Refined version of \cref{lem:sos-median}]\label[lemma]{lem:sos-median-appendix}
  For every $d,k \in \N$ and $C,r > 0$ there is an algorithm \textsc{median-sdp} which runs in time $(dk \log C)^{O(1)}$ and has the following guarantees.
  Let $Z_1,\ldots,Z_k \in \R^d$.
  Suppose that $\mu \in \R^d$ is certifiably $(r',1/100)$-central with respect to $Z_1,\ldots,Z_k$.
  And, suppose that at most $k/100$ of the vectors $Z_1,\ldots,Z_k$ have $\|Z_i - \mu\| > Cr'$.
  Then:
  \begin{itemize}
    \item If $2r' \geq r \geq r'/2$, \textsc{median-sdp} returns a point $x$ such that $\|\mu - x\| = O(r)$.
    \item Otherwise, \textsc{median-sdp} either returns $x$ such that $\|\mu - x\| \leq O(r + r')$ or outputs \textsc{reject}.
  \end{itemize}
\end{lemma}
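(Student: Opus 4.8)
The plan is to re-run the proof of \cref{lem:sos-median}, which splits into a \emph{feasibility} step (the polynomial system $\cA$ is satisfiable, so \textsc{median-sdp} does not reject) and a \emph{soundness} step (any pseudodistribution satisfying $\cA$ yields a point close to $\mu$), while keeping careful track of the fact that the radius enters these two steps in \emph{two distinct roles}: as the parameter $r$ with which the algorithm builds $\cA$, and as the radius $r'$ at which $\mu$ has a centrality witness via \cref{lem:witness}. In the proof of \cref{lem:sos-median} these are the same number; here they are not, and the whole refinement amounts to redoing the bookkeeping with both present. The algorithm is literally \cref{alg:med-sdp}; only the analysis changes.

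\emph{Soundness.} First I would prove the robust form of \cref{lem:sos-median-proof}: if $\mu$ is certifiably $(r',1/100)$-central (hence certifiably $(r',1/10)$-central) and $\pE$ is \emph{any} degree-$8$ pseudodistribution satisfying $\cA(\ldots,r,1/(100Cr),4k/r,\gamma)$, then $\pE\|x-\mu\|^2 = O((r+r')^2)$. This is the argument of \cref{lem:sos-median-proof} verbatim, except that the centrality witness for $\mu$ produced by \cref{lem:witness,lem:nice-witness} is taken at radius $r'$ rather than $r$: in the one place where the ``$-r$'' contribution from $\cA$ is added to the ``$-r$'' contribution from $\mu$'s witness, one gets ``$-(r+r')$'' instead of ``$-2r$'', and accordingly the companion estimate \cref{lem:alpha-bounds} is re-derived with $\sum_i\alpha_i\alpha_i' a_i = \Theta(k/(r+r'))$ in place of $\Theta(k/r)$ (the two inequalities feeding its proof now carry coefficients $r$ and $r'$ respectively); \cref{lem:b-bounds,lem:sigma-bounds} are untouched, as they only involve the auxiliary variables $a_{i,t}$ and the internally-consistent parameters of $\cA$. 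Via \cref{fact:2-norm} this gives: whenever \textsc{median-sdp} returns a point, it returns some $x$ with $\|\mu-x\| = O(r+r')$. This already yields the ``returns $x$ with $\|\mu-x\|\le O(r+r')$ or rejects'' clause of the second bullet, and — since $r\asymp r'$ on $[r'/2,2r']$ — the $O(r)$ bound in the first bullet.

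\emph{Non-rejection.} It remains to show that for $r\in[r'/2,2r']$ the system $\cA(\ldots,r,1/(100Cr),4k/r,\gamma)$ is feasible for some $\gamma$. Here the tool is to \emph{rescale the centrality witness of $\mu$ across radii}. Take the nice witness for $\mu$ at radius $r'$ coming from \cref{lem:witness,lem:nice-witness} (this is where the outlier hypothesis at $Cr'$ is used, and it is given), and substitute $v\mapsto(r'/r)v$ in its defining polynomial identity. One checks that $\langle Z_i-\mu,v\rangle-r'$ becomes $(r'/r)\bigl(\langle Z_i-\mu,v\rangle-r\bigr)$, so each $\alpha_i$ is replaced by $(r'/r)\alpha_i$; the decisive point is that $(r'/r)\cdot\bigl(\{0\}\cup[1/(100Cr'),4k/r']\bigr)=\{0\}\cup[1/(100Cr),4k/r]$, so the rescaled coefficients land \emph{exactly} in the range that $\cA$ at radius $r$ prescribes, and the zeros (the indices with $\|Z_i-\mu\|>Cr'$) are still zero — no outlier information about $Cr$ is needed. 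The term $\gamma(1-\|v\|^2)$ becomes $(r'/r)^2\gamma(1-\|v\|^2)+\gamma\bigl(1-(r'/r)^2\bigr)$; rounding $(r'/r)^2\gamma$ up to the nearest multiple of $1/100$ keeps it in the finite search set, and the sum-of-squares remainder stays a sum of squares after the linear substitution. For $r\ge r'$ the leftover constant $\gamma\bigl((r'/r)^2-1\bigr)$ is $\le 0$, so the left-hand-side constant only \emph{decreases}, staying below $k/10$; hence $\cA$ is feasible, \textsc{median-sdp} returns a point, and combined with the soundness step this completes the first bullet for $r\in[r',2r']$ and the second bullet for $r>2r'$.

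\emph{Main obstacle.} The delicate case is $r\in[r'/2,r')$: the rescaling is forced to run from a larger radius to a smaller one, the leftover constant $\gamma\bigl((r'/r)^2-1\bigr)$ is now positive, and one must keep the resulting left-hand-side constant below the $k/10$ budget of $\cA_{\mathrm{sos}}$. One controls $\gamma$ by evaluating the witness identity at $b=v=0$, which shows $\gamma$ is at most the left-hand-side constant of the $r'$-witness, and $(r'/r)^2\le 4$, so the left-hand side is inflated by at most a fixed constant factor. Verifying that the (intentionally non-optimized) slack in the constants of \cref{lem:witness,lem:nice-witness,lem:fix-alpha,lem:fix-gamma} — and, if necessary, a correspondingly generous choice of the constant ``$1/10$'' in the definition of $\cA_{\mathrm{sos}}$ — absorbs this inflation is the one genuinely arithmetic point of the proof, and the part I would grind through most carefully. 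By contrast, the soundness step should present no difficulty beyond the routine ``$2r\mapsto r+r'$'' substitution, and the $r\ge r'$ direction of non-rejection is immediate from the rescaling.
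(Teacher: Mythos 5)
Your proposal is correct and follows essentially the same route as the paper, which proves this lemma by exactly the two observations you develop: soundness of \textsc{median-sdp} against a $\mu$-witness at radius $r'$ (giving $\|\hat\mu-\mu\|=O(r+r')$ whenever a pseudodistribution is found) and feasibility of $\cA$ for $r\in[r'/2,2r']$. The one point where you diverge in mechanism is immaterial: the paper silently widens the $\alpha$-ranges to $[1/(200Cr),8k/r]$ so the radius-$r'$ witness fits unrescaled, whereas your substitution $v\mapsto(r'/r)v$ keeps the ranges exact at the cost of the $\gamma\bigl((r'/r)^2-1\bigr)$ inflation you correctly flag, which is indeed absorbed by the slack in the soundness argument (the combined constant only needs to stay below $k/2$, since $\gamma\le 0.07k$ forces the inflated constant below $0.28k$).
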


The proof of \cref{lem:sos-median-appendix} follows exactly the proof of \cref{lem:sos-median}, with the following additional observations.
First, for any $r$, if there exists a pseudodistribution satisfying $\cA(Z_1,\ldots,Z_k,r,1/200Cr,8k/r,\gamma)$ for some $\gamma$ and if $\mu$ is certifiably $(r',1/100)$-central, then \textsc{median-sdp} returns $x$ with $\|x - \mu\| \leq O(r+r')$.
Second, so long as $r \in [r'/2,2r']$, then $\cA(Z_1,\ldots,Z_k,r,1/200Cr,8k/r,\gamma)$ will be feasible for some choice of $\gamma$, so such a pseudodistribution will exist.

Thus, the adaptive algorithm will use binary search to choose the smallest $r$ such that \textsc{median-sdp} does not output \textsc{reject}.
We just have to ensure that the range of potential values for $r$ to search over is not too large.

Fix an underlying random variable $X$ with covariance $\Sigma$.
Let $r^* = \sqrt{\Tr \Sigma / n} + \sqrt{\|\Sigma\| \log(1/\delta)/n}$.
We may assume that the radius $r_0$ of the minimum-size ball containing at least $0.8k$ of the bucketed means $Z_i$ is in the range $[cr^*, C \log(1/\delta) r]$ for some constants $c,C$.
If $r_0 < c r^*$ then the simple median algorithm (see Section 1 of the main paper) finds an estimator with the guarantees of Theorem 1.2.
And by our analysis of that algorithm in Section 1 of the main paper, $r_0 > C \sqrt{\Tr \Sigma \log(1/\delta)}$ only with probability $\delta$.

The adaptive algorithm can begin by computing $r_0$ (or, more precisely, by inspecting only balls centered at $Z_i$'s, a number $r_0'$ such that $r_0' \in [r_0, 4r_0]$).
Then by conducting binary search over values of $r$ in the range $[c r_0 / k, Cr_0]$ for some (other) constants $c,C$, it will find $r \in [r^*/2, 2r^*]$.
\cref{lem:sos-median-appendix} concludes the argument.


\end{document}